\crefname{enumi}{}{}
\Crefname{enumi}{}{}
\newif\iflong
\tikzset{negated/.style={
        decoration={markings,
            mark= at position 0.5 with {
                \node[transform shape] (tempnode) {$\backslash$};
            }
        },
        postaction={decorate}
    }
}
\theoremstyle{plain}
\declaretheorem[name={Theorem}]{theorem}
\declaretheorem[name={Lemma}, sibling=theorem]{lemma}
\declaretheorem[name={Proposition}, sibling=theorem]{proposition}
\declaretheorem[name={Claim}, numberwithin=theorem]{claim}
\declaretheorem[name={Corollary}, sibling=theorem]{corollary}
\declaretheorem[name={Example}, sibling=theorem]{example}
\theoremstyle{definition}
\declaretheorem[name={Definition}]{definition}
\declaretheorem[name={Remark},numbered=no]{remark}
\DeclareMathOperator{\enters}{\searrow}
\newcommand{\A}{\mathcal{A}}
\newcommand{\B}{\mathcal{B}}
\newcommand{\C}{\mathcal{C}}
\newcommand{\LR}{\Leftrightarrow}
\newcommand{\ra}{\rightarrow}
\newcommand{\Ra}{\Rightarrow}
\newcommand{\La}{\Leftarrow}
\newcommand{\mc}[1]{\mathcal{#1}}
\newcommand{\mrm}[1]{\mathrm{#1}}
\newcommand{\ol}[1]{\overline{#1}}
\renewcommand{\phi}{\varphi}
\newcommand{\Sinf}[1]{\Sigma_{#1}}
\newcommand{\dSinf}[1]{d\text{-}\Sigma_{#1}}
\newcommand{\Pinf}[1]{\Pi_{#1}}
\newcommand{\Sicom}[1]{\Sigma^{\mathrm{c}}_{#1}}
\newcommand{\dSicom}[1]{d\text{-}\Sigma^{\mathrm{c}}_{#1}}
\newcommand{\Picom}[1]{\Pi^{\mathrm{c}}_{#1}}
\newcommand{\dSigma}{d\text{-}\Sigma}
\setlist*[enumerate,1]{
 label=(\arabic*),
}
\newlist{inlinelist}{enumerate*}{1}
\setlist*[inlinelist,1]{
label=(\arabic*),
}
\begin{document}
\title{The complexity of Scott sentences of scattered linear orders 
}

\author[R. Alvir]{Rachael Alvir}
\author[D. Rossegger]{Dino Rossegger}

\address{Department of Mathematics, University of Notre Dame
}
\email{ralvir@nd.edu}

\address{Department of Pure Mathematics, University of Waterloo
}
\email{drossegg@uwaterloo.ca}

\thanks{
The first author was supported by NSF grant DMS-1547292. The second author was supported by the Austrian Science Fund
FWF through project P 27527. We are grateful to Julia Knight for many helpful discussions and comments.}

\begin{abstract}
  Given a countable scattered linear order $L$ of Hausdorff rank $\alpha < \omega_1$ we show that it has a $\dSinf{2\alpha+1}$ Scott sentence. Ash~\cite{ash1986} calculated the back and forth relations for all countable well-orders. From this result we obtain that this upper bound is tight, i.e.,  for every $\alpha < \omega_1$ there is a linear order whose optimal Scott sentence has this complexity. We further show that for all countable $\alpha$ the class of Hausdorff rank $\alpha$ linear orders is $\pmb \Sigma_{2\alpha+2}$ complete.
\end{abstract}
\maketitle
\section{Introduction}
Scott~\cite{scott1963} showed that every countable structure $\mc A$ can be described up to isomorphism among countable structures by a single sentence of $L_{\omega_1\omega}$, called the \emph{Scott sentence} of $\A$. The logic $L_{\omega_1\omega}$ extends finitary first-order logic by allowing countable disjunctions and conjunctions; if the conjunctions and disjunctions are over c.e. sets of formulas, the sentence is called \emph{computable}.

Although there is no prenex normal form for formulas of $L_{\omega_1 \omega}$, there is a normal form which allows every $L_{\omega_1\omega}$ formula to be measured by a kind of quantifier complexity. Every $L_{\omega_1\omega}$ formula is logically equivalent to a $\Sinf{\alpha}$ or $\Pinf{\alpha}$ infinitary formula for some countable ordinal $\alpha$. A formula that is the conjunction of a $\Sinf{\alpha}$ and a $\Pinf{\alpha}$ formula is called a $\dSinf{\alpha}$ formula.

Closely related to the complexity of a structure's Scott sentence is its Scott rank. Scott rank is a well-studied notion in computable structure theory and descriptive set theory. For example, in~\cite{ash2000} Scott ranks for classes of computable structures such as ordinals, vector spaces, and superatomic Boolean algebras are calculated. However, there exist several incompatible but closely related definitions of Scott rank in the literature, see~\cite{ash2000} for a discussion.  Montalb\'an~\cite{montalban2015} attempted to standardize Scott rank by proposing that a structure $\A$'s \emph{categoricity Scott rank}, the least $\alpha$ such that $\A$ has a $\Pinf{\alpha+1}$ Scott sentence, is the most robust such notion. However, in~\cite{alvir2018} it is shown that a least quantifier-complexity Scott sentence for a structure exists, from which one can calculate a structure's categoricity Scott rank as well as the other notions of Scott rank discussed in~\cite{montalban2015}. Therefore, having a least complexity Scott sentence gives a more finegrained picture than the Scott rank of a structure.

In the case that the structure is computable, the least complexity computable Scott sentence (if it exists) gives an upper bound on the complexity of the set of indices of its computable copies.
The complexity of a computable structure's index set has seen a lot of interest in the last few years. Although the least complexity of a structure's Scott sentence does not always establish the complexity of its index set~\cite{knight2014}, the two have been closely related in practice. Often, as in~\cite{calvert2006b}, index set results for several classes of algebraic structures were first conjectured by finding an optimal Scott sentence.

In this article we investigate the complexity of Scott sentences of scattered linear orders. The most prominent examples of scattered linear orders are well-orders. Ash~\cite{ash1986} calculated their back and forth relations. His results imply that for any countable $\alpha$, $\omega^{\alpha}$ has a $\Pi_{2 \alpha + 1}$ optimal Scott sentence. Scattered linear orders have an inductive characterization due to Hausdorff~\cite{hausdorff1908}. The Hausdorff rank of a linear order is the least number of induction steps necessary to obtain it. From McCoy's work \cite{mccoy2003} on $\Delta_2^0$-categoricity of computable linear orders, one can see that every Hausdorff rank 1 linear order has a $\Sigma_4$ Scott sentence. Nadel~\cite{nadel_scott_1974} gave a first upper bound on the complexity of linear orders of countable Hausdorff rank $\alpha$ by showing that the complexity of the Scott sentence of an order of Hausdorff rank $\alpha$ is less than $\Pi_{\omega\cdot(\alpha+2)}$. 

In this article we obtain much better bounds than those provided by Nadel~\cite{nadel_scott_1974}.
We show that all scattered linear orders of Hausdorff rank $\alpha$ have a $d$-$\Sigma_{2 \alpha + 1}$ Scott sentence, and that this bound is tight in the sense that there is a scattered linear order of Hausdorff rank $\alpha$ for which this sentence is optimal. For the Hausdorff rank $1$ case, we classify the linear orders which have $\Pi_3$ Scott sentences, show that their Scott sentences are optimal and prove that all other Hausdorff rank $1$ linear orders have $\dSinf{3}$ optimal Scott sentences. Results by Frolov and Zubkov (unpublished) show that for finite $\alpha>1$ there are orders with a Scott sentence of complexity $\Pi_n$ for any $n$, $3<n<2\alpha$. Since for every $\alpha$, there exists a linear order of Hausdorff rank $\alpha$ having a $\dSinf{2\alpha+1}$ Scott sentence, one cannot obtain optimal bounds in general.

Nadel~\cite{nadel_scott_1974} observed that the Scott rank of any computable structure must be less than or equal to $\omega_1^{\mrm{CK}}+1$ where $\omega_1^{\mrm CK}$ is the first non-computable ordinal. A computable structure with categoricity Scott rank $\omega_1^{\mrm{CK}}$ or $\omega_1^{\mrm{CK}}+1$ is said to have \emph{high Scott rank}. Note that if a structure has high Scott rank for one notion of Scott rank then this holds for every notion that appears in the literature.
Nadel~\cite{nadel_scott_1974} also showed that the Scott rank of a computable scattered linear order cannot be high. As a corollary of our results we get a new proof of this theorem. Several examples of structures with high Scott rank have been found. One of the first is due to Harrison~\cite{harrison_recursive_1968} who constructed a computable linear order of order type $\omega^{\mrm{CK}}_1\cdot(1+\eta)$. It has Scott rank $\omega_1^{\mrm{CK}}+1$. Calvert, Knight, and Miller~\cite{calvert_computable_2006} and Knight and Miller~\cite{knight_computable_2010} found examples of computable structures with Scott rank $\omega^{\mathrm{CK}}_1$. Recently, Harrison-Trainor, Igusa, and Knight~\cite{harrison-trainor_new_2018} gave examples of structures of Scott rank $\omega_1^{CK}$ whose computable infinitary theory is not $\aleph_0$-categorical.

We also obtain a new result on the Borel complexity of classes of scattered linear orders. It is well known that the class of scattered linear orders is $\pmb\Pi_1^1$ complete; see for instance~\cite{kechris2012classical}. We refine this picture by proving that the class of linear orders of Hausdorff rank $\alpha$ is $\pmb\Sigma_{2\alpha+2}$ complete.

In the rest of this section we review the concepts needed in this article. In \cref{sec:upperbound} we calculate the complexity of the Scott sentences and in \cref{sec:tightness} we show that our bounds are tight and calculate the Borel complexity of the class of linear orders of Hausdorff rank $\alpha$.

\subsection{Infinitary logic, back and forth relations and Scott sentences}

Every formula of $L_{\omega_1 \omega}$ is equivalent to one that is $\Sinf{\alpha}$ or $\Pinf{\alpha}$. We define the classes of $\Sigma_{\alpha}, \Pi_{\alpha}$ formulas inductively as follows:
\begin{enumerate}
  \item A formula $\phi(\ol x)$ is $\Sigma_0$ iff it is $\Pi_0$ iff it is a finitary quantifier-free formula.
    \item A $\Sigma_{\alpha}$ formula is a formula of the form $\bigvee_{i \in \omega} \exists \bar{x} \phi_i(\bar{x})$ where each $\phi_i$ is $\Pi_{\beta}$ for $\beta < \alpha$.
    \item A $\Pi_{\alpha}$ formula is the negation of a $\Sigma_{\alpha}$ formula. Equivalently, such a formula is of the form $\bigwedge_{i \in \omega}  \forall \bar{x} \phi_i(\bar{x})$ where each $\phi_i$ is $\Sigma_{\beta}$ for $\beta < \alpha$.
\end{enumerate}
A formula is said to be $X$-computable if all of its disjunctions and conjunctions are over $X$-$c.e$ sets of formulas. Notice that every infinitary formula is $X$-computable for some $X$.
We write $\Sigma_{\alpha}^{X}$, $\Pinf{\alpha}^{X}$ or $\dSinf{\alpha}^X$ if we want to emphasize that a formula is $X$-computable. If a formula is computable, we write $\Sicom{\alpha}$, $\Picom{\alpha}$ or $\dSicom{\alpha}$.

Scott~\cite{scott1963} proved that any countable structure can be described up to isomorphism among countable structures by a sentence in $L_{\omega_1\omega}$ -- its \emph{Scott sentence.} Alvir, Knight, and McCoy~\cite{alvir2018}, effectivizing a result of Miller~\cite{miller1983}, proved the following. 

\begin{theorem}[\cite{alvir2018}]\label{thm:rachael}
Let $\mc A$ be a countable structure and $X\subseteq \omega$.
If $\mc A$ has both a $\Sigma_{\alpha}^{X}$ Scott sentence and a $\Pi_{\alpha}^{X}$ Scott sentence, then it has a $\dSigma_{\beta}^{X}$ Scott sentence for some $\beta < \alpha$.
\end{theorem}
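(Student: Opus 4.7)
The plan is to extract a workable defining formula from each of the two given Scott sentences and combine them into a single low-complexity sentence. First I would put the $\Sigma_\alpha^X$ Scott sentence into the normal form $\phi=\bigvee_{i\in I}\exists\bar x_i\,\phi_i(\bar x_i)$, with $I$ an $X$-c.e.\ index set and each $\phi_i$ a $\Pi_{\beta_i}^X$ formula for some $\beta_i<\alpha$. Since $\mc A\models\phi$, I fix some $i_0\in I$ and a tuple $\bar a\in\mc A$ with $\mc A\models\phi_{i_0}(\bar a)$. Any countable $\mc B$ that realizes $\phi_{i_0}$ satisfies $\phi$ and is therefore isomorphic to $\mc A$, so $\exists\bar x\,\phi_{i_0}(\bar x)$ is already a Scott sentence of $\mc A$ of complexity $\Sigma_{\beta_{i_0}+1}^X$. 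When $\alpha$ is a limit ordinal this gives $\beta_{i_0}+1<\alpha$, and this sentence is a fortiori $\dSigma_{\beta_{i_0}+1}^X$, closing that case.

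The substantive case is $\alpha=\gamma+1$ a successor, where the extraction above lands at $\Sigma_{\gamma+1}^X$ and provides no improvement. To drop one quantifier level I invoke the effective form of the Miller/Montalb\'an definability-of-orbits correspondence: a $\Pi_{\gamma+1}^X$ Scott sentence yields, for every tuple in $\mc A$, a $\Sigma_\gamma^X$ formula defining its $\Aut(\mc A)$-orbit. Applying this to the particular tuple $\bar a$ above produces a $\Sigma_\gamma^X$ formula $\theta(\bar x)$ whose solution set in $\mc A$ is exactly the orbit of $\bar a$. I then take
\[
\chi \;=\; \exists \bar x\,\theta(\bar x)\;\wedge\;\forall \bar x\bigl(\theta(\bar x)\to\phi_{i_0}(\bar x)\bigr).
\]

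The complexity analysis of $\chi$ runs as follows. The first conjunct is $\Sigma_\gamma^X$ because the outer $\exists\bar x$ absorbs into the leading $X$-c.e.\ disjunction of $\theta$. The second conjunct is $\Pi_\gamma^X$: $\neg\theta$ and $\phi_{i_0}$ both lie in $\Pi_\gamma^X$, and a disjunction of two $\Pi_\gamma^X$ formulas is again $\Pi_\gamma^X$ via the equivalence $\forall\bar u\,\sigma\vee\forall\bar v\,\tau\equiv\forall\bar u\bar v\,(\sigma\vee\tau)$ combined with distributivity of $\vee$ over infinite conjunction. Hence $\chi\in\dSigma_\gamma^X$. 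That $\chi$ is a Scott sentence is immediate: $\mc A\models\chi$ because $\theta(\bar a)$ holds and $\phi_{i_0}$ is $\Aut(\mc A)$-invariant, and any countable $\mc B\models\chi$ contains some $\bar b$ with $\theta(\bar b)$, hence $\phi_{i_0}(\bar b)$, so $\mc B\models\exists\bar x\,\phi_{i_0}(\bar x)$, which forces $\mc B\cong\mc A$.

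The hard part will be justifying the orbit-definability input. The non-effective correspondence between $\Pi_{\gamma+1}$ Scott sentences and $\Sigma_\gamma$-definable orbits is classical, but I must verify that the orbit-defining formula $\theta$ can actually be chosen in $\Sigma_\gamma^X$, i.e., with $X$-c.e.\ indexing of all its disjunctions. This amounts to tracing how the $X$-c.e.\ conjuncts of the given $\Pi_\alpha^X$ Scott sentence $\psi$ drive the orbit-recovery argument, and is the main technical content of the effectivization carried out in \cite{alvir2018}.
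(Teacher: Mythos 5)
Your reduction is sound, and in outline it follows the same architecture as the actual proof in \cite{alvir2018}: extract a single true disjunct $\exists\bar x\,\phi_{i_0}(\bar x)$ from the normal form of the $\Sigma_\alpha^X$ Scott sentence (this is itself a Scott sentence, of complexity $\Sigma_{\beta_{i_0}+1}^X$), dispose of limit $\alpha$ immediately, and in the successor case $\alpha=\gamma+1$ assemble the $\dSigma_\gamma^X$ sentence $\exists\bar x\,\theta(\bar x)\wedge\forall\bar x\bigl(\theta(\bar x)\to\phi_{i_0}(\bar x)\bigr)$ from a $\Sigma_\gamma^X$ formula $\theta$ that holds of the witness $\bar a$ and, within $\mc A$, implies $\phi_{i_0}$. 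Your complexity bookkeeping is also correct for $\gamma\geq 1$ (absorption of the outer quantifiers into the $X$-c.e.\ disjunction, and closure of $\Pi_\gamma^X$ under binary disjunction via merging the index sets); only the degenerate case $\alpha=1$ needs separate routine treatment, since $\exists\bar x$ does not absorb into $\Sigma_0$.

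The genuine gap is the pivotal input: that a $\Pi_{\gamma+1}^X$ Scott sentence yields a $\Sigma_\gamma^X$ formula defining the orbit of $\bar a$. You do not prove this; you explicitly defer it to ``the effectivization carried out in \cite{alvir2018}'' --- that is, to the very paper whose theorem you are proving, which makes the argument circular rather than a proof. Nor can this step be waved through as a known black box. The boldface correspondence (\cref{thm:ssnopar}, (1)$\Leftrightarrow$(2), due to Montalb\'an~\cite{montalban2015}) does produce \emph{some} $\Sinf{\gamma}$ definition of each orbit, but its proof passes through $\gamma$-freeness and canonical back-and-forth formulas, whose conjunctions and disjunctions are indexed by sets computable only from iterated jumps of $\mc A$ --- and \cref{thm:rachael} places no effectiveness hypothesis on $\mc A$ whatsoever, only on the two given sentences. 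So the $X$-c.e.\ indexing of $\theta$ cannot be inherited from the structure; it has to be manufactured syntactically out of the given $X$-computable $\Pi_{\gamma+1}$ sentence, and that extraction is precisely the substantive technical content of the result. Since the present paper only quotes the theorem from \cite{alvir2018} without proof, what you have produced is a correct (and accurately flagged) reduction of the theorem to its key lemma, not a proof of the theorem.
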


This result implies that every countable structure has exactly one least complexity Scott sentence in the following partial order:
\vspace{1em}
\begin{center}
\begin{tikzcd}
\Sinf{\alpha}\ar[dr] & &\Sinf{\alpha+1}\ar[dr]& & \Sinf{\alpha+2}\\
\dots &\dSinf{\alpha}\ar[ur]\ar[dr]& & \dSinf{\alpha+1}\ar[dr]\ar[ur] & \dots\\
\Pinf{\alpha}\ar[ur] & &\Pinf{\alpha+1}\ar[ur]& & \Pinf{\alpha+2}
\end{tikzcd}
\end{center}
\vspace{1em}

Montalb\'an~\cite{montalban2015} proved that several conditions are equivalent to having a Scott sentence of a certain complexity.
\begin{theorem}[{\cite[Theorem 1.1.]{montalban2015}}]\label{thm:ssnopar}
  Let $\mc A$ be a countable structure and $\alpha<\omega_1$. Then the following are equivalent:
  \begin{enumerate}
    \item Every automorphism orbit is $\Sinf{\alpha}$ definable without parameters.
    \item $\mc A$ has a $\Pinf{\alpha+1}$ Scott sentence.
    \item $\mc A$ is uniformly boldface $\pmb{\Delta}_\alpha^0$-categorical.
    \item No tuple in $\A$ is $\alpha$-free.
  \end{enumerate}
\end{theorem}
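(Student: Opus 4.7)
The plan is to show the four conditions equivalent by separating the purely syntactic equivalences $(1) \Leftrightarrow (2) \Leftrightarrow (4)$ from the effective characterization $(3)$.

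For $(1) \Rightarrow (2)$, given $\Sinf{\alpha}$ orbit-defining formulas $\phi_{\bar a}(\bar x)$ for every $\bar a \in A^{<\omega}$, I would write down the Scott sentence
\[
\phi_\emptyset \;\wedge\; \bigwedge_{\bar a \in A^{<\omega}} \forall \bar x \Bigl( \phi_{\bar a}(\bar x) \to \forall y \bigvee_{b \in A} \phi_{\bar a b}(\bar x, y) \Bigr),
\]
of complexity $\Pinf{\alpha+1}$; any countable model admits a back-and-forth isomorphism with $\mc A$ built by matching the orbit formulas stage by stage. For the converse $(2) \Rightarrow (1)$, I would argue that a $\Pinf{\alpha+1}$ Scott sentence forces the $\Sinf{\alpha}$-type of each tuple to isolate its $\Aut(\mc A)$-orbit (via a Scott-style back-and-forth argument comparing two realizations), and that on a countable structure this type can be replaced by a single $\Sinf{\alpha}$ orbit-defining formula.

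The equivalence with $(4)$ would be organized around the back-and-forth relation $\equiv_\alpha$ of $\Sinf{\alpha}$-elementary equivalence of tuples: a tuple $\bar a$ is $\alpha$-free exactly when some $\bar b \equiv_\alpha \bar a$ lies outside its orbit, so the absence of $\alpha$-free tuples says precisely that $\equiv_\alpha$-classes coincide with $\Aut(\mc A)$-orbits, which restates $(1)$.

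Finally, for $(1) \Leftrightarrow (3)$, the forward direction drives a uniform back-and-forth procedure from the orbit formulas: given oracles for two copies $\mc B_0, \mc B_1 \cong \mc A$, extend a partial isomorphism stagewise by searching for extensions that match on the parameter-free orbit formulas, which is a $\Delta^0_\alpha$ search in $\mc B_0 \oplus \mc B_1$. The converse is the step I expect to be hardest: from a single uniform Turing functional computing isomorphisms between arbitrary copies, one must extract a parameter-free $\Sinf{\alpha}$ definition of each orbit. The natural route is a Lopez-Escobar style argument, observing that the set of copies in which a fixed tuple realizes a given orbit is an isomorphism-invariant Borel class of the correct level, hence $\Sinf{\alpha}$-definable.
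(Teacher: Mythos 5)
The paper does not prove this theorem at all: it is imported verbatim from Montalb\'an~\cite{montalban2015}, so your proposal must be measured against Montalb\'an's argument. Against that standard, the central gap is your treatment of (4): it is \emph{not} true that a tuple $\ol a$ is $\alpha$-free exactly when some $\ol b \equiv_\alpha \ol a$ lies outside its orbit. The definition of $\alpha$-freeness quantifies over all $\beta<\alpha$ and all extension tuples $\ol b$, demanding $\leq_\beta$-approximations $\ol a'\ol b'$ with $\ol a \not\leq_\alpha \ol a'$, and this is strictly stronger than having a non-automorphic $\equiv_\alpha$-mate. Concretely, take $\mc A = \omega$ and $\alpha = 1$: the $\Sinf{1}$ formulas ``there are at least $n$ elements below $x$'' separate all elements, so the $\equiv_1$-classes are exactly the singleton orbits; yet the orbit of $n$ is not $\Sinf{1}$-definable (``at most $n$ below'' is $\Pinf{1}$), $\omega$ has no $\Pinf{2}$ Scott sentence (its optimal sentence is $\Picom{3}$, as this very paper shows), and every element of $\omega$ \emph{is} $1$-free: given any $\ol b$, shift the whole configuration far to the right to get $n\ol b \leq_0 a'\ol b'$ with $n \not\leq_1 a'$. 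The same confusion undermines your (2)$\Rightarrow$(1): a $\Sinf{\alpha}$-type is a countable conjunction of $\Sinf{\alpha}$ formulas, hence a $\Pinf{\alpha+1}$ object, and countability of the structure does not compress it to a single $\Sinf{\alpha}$ formula --- that compression is exactly the content of Montalb\'an's lemma that a non-$\alpha$-free tuple has $\Sinf{\alpha}$-definable orbit, where the witnesses $\beta<\alpha$ and $\ol b$ from non-freeness supply the single formula. In general ``$\equiv_\alpha$-classes $=$ orbits'' only yields $\Pinf{\alpha+1}$-definable orbits and a $\Pinf{\alpha+2}$ Scott sentence; the gap between these levels is the raison d'\^etre of the $\alpha$-freeness notion.

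Two further problems. In (1)$\Rightarrow$(2) your sentence contains only the ``forth'' conjunct $\forall y \bigvee_{b} \phi_{\ol a b}(\ol x, y)$ and omits the ``back'' conjunct $\bigwedge_{b \in A} \exists y\, \phi_{\ol a b}(\ol x, y)$; as written, with $\mc A = \omega$ and its $\Sinf{2}$ orbit formulas, every finite linear order satisfies your sentence, so it is not a Scott sentence (this is an easily repaired slip, but the back-and-forth you invoke genuinely needs both clauses). In (3)$\Rightarrow$(1), the Lopez--Escobar/Vaught route requires showing that $\{(\mc B,\ol b): (\mc B,\ol b)\cong (\mc A,\ol a)\}$ is $\pmb{\Sigma}^0_\alpha$, and establishing \emph{that} level from a uniform functional is the entire difficulty, not an observation: in general even the isomorphism class of $\mc A$ is only $\pmb{\Pi}^0_{\alpha+1}$ (e.g., the simple linear orders of this paper, whose classes are $\pmb{\Pi}^0_{2\alpha+1}$-complete), so ``invariant Borel class of the correct level'' begs the question. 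Montalb\'an instead proves the hard directions by contrapositive from an $\alpha$-free tuple, using forcing with generic copies to defeat any candidate $\pmb{\Delta}^0_\alpha$ functional, and then closes the cycle through the combinatorial lemma (4)$\Rightarrow$(1). Your outline is salvageable only by replacing both the $\equiv_\alpha$ characterization and the Lopez--Escobar step with this freeness-and-forcing machinery.
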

The \emph{categoricity Scott rank} of $\A$ is the least $\alpha$ satisfying one of the above statements.
\begin{theorem}[{\cite[Theorem 2.5.]{montalban2015}}]\label{thm:sspar}
  Let $\mc A$ be a countable structure and $\alpha<\omega_1$. Then the following are equivalent:
  \begin{enumerate}
    \item $\mc A$ has a $\Sinf{\alpha+2}$ Scott sentence.
    \item There is a tuple $\ol a\in A^{<\omega}$ such that $(\mc A,\ol a)$ has a $\Pinf{\alpha+1}$ Scott sentence.
  \end{enumerate}
\end{theorem}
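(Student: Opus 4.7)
The plan is to prove the two implications separately; direction $(2) \Rightarrow (1)$ is routine, while $(1) \Rightarrow (2)$ carries the substance of the argument.

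For $(2) \Rightarrow (1)$: if $\psi(\ol x)$ is a $\Pinf{\alpha+1}$ Scott sentence of $(\A, \ol a)$, then $\exists \ol x\, \psi(\ol x)$ is a $\Sinf{\alpha+2}$ sentence satisfied by $\A$ via $\ol a$. If any countable $\mc B$ satisfies it via some witness $\ol b$, then $(\mc B, \ol b) \cong (\A, \ol a)$ by the hypothesis on $\psi$, and in particular $\mc B \cong \A$. Hence it is a $\Sinf{\alpha+2}$ Scott sentence of $\A$.

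For $(1) \Rightarrow (2)$: Place the given $\Sinf{\alpha+2}$ Scott sentence $\phi$ of $\A$ into the normal form $\bigvee_i \exists \ol y_i\, \psi_i(\ol y_i)$ with each $\psi_i$ being $\Pinf{\alpha+1}$, and pick $i$ together with a witness $\ol a$ such that $\A \models \psi_i(\ol a)$. The plan is to choose $\ol a$ so that $(\A, \ol a)$ has a $\Pinf{\alpha+1}$ Scott sentence. Applying \Cref{thm:ssnopar} to the expansion $(\A, \ol a)$, this reduces to showing that no tuple of $(\A, \ol a)$ is $\alpha$-free, i.e., every automorphism orbit there is $\Sinf{\alpha}$-definable.

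Suppose for contradiction that for every disjunct $\psi_i$ and every realizing $\ol a$, some tuple remains $\alpha$-free in $(\A, \ol a)$. The strategy is then to construct a countable $\mc B$ satisfying $\phi$ but not isomorphic to $\A$, contradicting that $\phi$ is a Scott sentence of $\A$. The construction is a back-and-forth at level $\alpha+1$: the assumed $\alpha$-freeness provides, at each stage, two inequivalent extensions of any partial isomorphism, and alternating between them defeats every candidate isomorphism with $\A$ while preserving the $\Pinf{\alpha+1}$ formulas $\psi_i$ appearing in $\phi$. The main obstacle is organizing the back-and-forth so as to simultaneously preserve $\Pinf{\alpha+1}$ satisfaction and destroy isomorphism, requiring careful bookkeeping between the $\alpha+1$-back-and-forth equivalence and the $\alpha$-freeness supplied by the contradictory hypothesis.
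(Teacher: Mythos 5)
First, note that the paper does not prove this statement at all: it is quoted verbatim from Montalb\'an~\cite{montalban2015}, so the comparison can only be against that source, not against an argument in this paper. Your direction $(2)\Rightarrow(1)$ is correct and complete: existentially quantifying the parameters of a $\Pinf{\alpha+1}$ Scott sentence of $(\A,\ol a)$ yields a $\Sinf{\alpha+2}$ Scott sentence of $\A$. Your reduction in $(1)\Rightarrow(2)$ via \cref{thm:ssnopar} (passing to $(\A,\ol a)$ and aiming to show no tuple is $\alpha$-free there) is also the right first move and matches Montalb\'an's framework.

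However, there is a genuine gap: you stop exactly where the content of the theorem lies, and the sketch you give in its place has a concrete flaw. The problem is how the constructed $\mc B$ is to fail to be isomorphic to $\A$. A single application of $\alpha$-freeness of a tuple over a witness $\ol a$ of $\psi_i$ can at best produce $(\mc B,\ol b)$ with $(\A,\ol a)\leq_{\alpha+1}(\mc B,\ol b)$ and $(\mc B,\ol b)\not\cong(\A,\ol a)$; since $\psi_i$ is $\Pinf{\alpha+1}$ this gives $\mc B\models \psi_i(\ol b)$, hence $\mc B\models\phi$, hence $\mc B\cong\A$ --- which is no contradiction, because the isomorphism may simply carry $\ol b$ to a witness of $\phi$ lying in a different automorphism orbit of $\A$. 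So the diagonalization must defeat \emph{every} isomorphism $\mc B\to\A$, i.e., handle all disjuncts and all orbits of witnesses simultaneously; ``alternating between two inequivalent extensions'' of a partial map does not do this, since structures can be isomorphic even when a particular family of tuple maps fails to extend. Moreover, $\alpha$-freeness only supplies $\leq_\beta$-control for ordinals $\beta<\alpha$ at each application, so the construction must thread a decreasing sequence of ordinals through the stages while still securing $(\A,\ol a)\leq_{\alpha+1}(\mc B,\ol b)$ in the limit; organizing this is precisely the work done by Montalb\'an's supporting lemmas, and your acknowledgment of the ``main obstacle'' does not discharge it. As written, the proposal is a correct plan for the easy direction plus a correct reduction, with the central construction of the hard direction missing.
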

Let us briefly discuss the notions appearing in the above theorems. Central to the notion of $\alpha$-freeness are the $\leq_\alpha$ back and forth relations.
Given countable structures $\mc A$ and $\mc B$ in the same language, tuples $\ol a\in A^{< \omega} $ , $\ol b\in B^{< \omega}$ and an ordinal $\alpha$, we write  $(\A,\ol a)\leq_{\alpha} (\B,\ol b)$
if and only if every $\Pinf{\alpha}$ formula true of $\bar{a}$ in $\A$ is true of $\bar{b}$ in $\B$; equivalently, if every $\Sinf{\alpha}$ formula true of $\bar{b}$ in $\B$ is true of $\bar{a}$ in $\A$. One can also define the back and forth relations inductively as follows. For $\leq_1$ we use the same definition as above: $(\A,\ol a)\leq_1(\B, \ol b)$ if all $\Pinf{1}$ formulas true of $\ol a$ in $\A$ are true of $\ol b$ in $\B$. For $\alpha>1$,
\[
 (\A,\ol a)\leq_{\alpha} (\B,\ol b)\LR (\forall \beta<\alpha) (\forall \ol d \in B^{<\omega}) (\exists \ol c \in A^{<\omega}) \text{ such that } (\mc B,\ol b \ol d)\leq_\beta (\mc A,\ol a \ol c) .\]
If $\mc A=\mc B$ and the structure is clear from context we often abuse notation and write $\ol a\leq_{\alpha}\ol b$. For a thorough treatment of back and forth relations including the case where $\alpha=0$ see~\cite[Chapter 15]{ash2000}.

Let $\A$ be a structure. A tuple $\ol a \in A^{<\omega}$ is \emph{$\alpha$-free} if
\[(\forall \beta<\alpha)\ (\forall \ol b)\  (\exists\ol a',\ol b')\   \left(\ol a\ol b \leq_\beta \ol a'\ol b' \quad \text{ and }\quad  \ol a \not \leq_\alpha \ol a'\right).\]
Given $\mc A$ and $\ol c \in A^{<\omega}$ we say that a tuple $\ol a\in A^{<\omega}$ is \emph{$\alpha$-free over $\ol c$} if it is $\alpha$-free in the structure $(\mc A,\ol c)$.

Uniform (boldface) $\pmb{\Delta}_\alpha^0$ categoricity and uniform (lightface) $\Delta_\alpha^0$ categoricity are notions studied in computable structure theory. A structure $\mc A$ is \emph{uniformly (lightface) $\Delta_\alpha^0$ categorical} if there is a $\Delta_\alpha^0$ operator $\Gamma$ such that for any isomorphic copy $\mc B$, $\Gamma^{\mc A \oplus \mc B}$ is an isomorphism between $\mc A$ and $\mc B$. It is \emph{uniformly (boldface) $\pmb{\Delta}_\alpha^0$ categorical} if it is uniformly $\Delta_{\alpha}^0$ categorical relative to some fixed oracle $X\subset \omega$, i.e., $\Gamma^{\mc A\oplus \mc B \oplus X}$ is an isomorphism between $\mc A$ and $\mc B$ for any isomorphic copy $\mc B$. We will use this in \cref{sec:tightness}. 

\subsection{Index sets} Given a structure $\mc A$, its \emph{index set} $I_\A$ is the set of indices of its computable isomorphic copies, where we identify
 a structure with its atomic diagram. We can also look at the index set of a structure relative to a set $X$, the set $I_\A^X$ of indices of $X$-computable isomorphic copies of $\A$.
The complexity of the Scott sentence gives an upper bound on the complexity of the index set. More formally, if $\A$ has a $\Sigma^X_\alpha$ Scott sentence, then any $\Sigma_\alpha^0(X)$ complete set can decide membership of $I_\A^X$. On the other hand, if $I_\A^X$ is Turing-complete for $\Sigma_\alpha^0(X)$ sets, then $\A$ cannot have a Scott sentence simpler than $\Sigma^X_\alpha$.
Since every infinitary sentence is computable relative to some set $X$, it is sufficient to compute the complexity of $I^X_\A$ for all $X$ to show that a Scott sentence for a structure $\A$ is optimal.

\subsection{Linear orders}
A linear order is \emph{scattered} if it does not have a dense suborder. Hausdorff~\cite{hausdorff1908} inductively constructed classes of linear orders $HR_\alpha$ and showed that a countable linear order is scattered if and only if it is in $HR_\alpha$ for some countable $\alpha$. The \emph{Hausdorff rank} of a linear order $L$ is the least $\alpha$ such that $L\in HR_\alpha$. In the literature several different definitions of this hierarchy exist; we will use the following:
\begin{definition}\label{def:hr}\leavevmode
  \begin{enumerate}
    \item $HR_0:= \{ n:n<\omega\}\footnote{Here, $n$ refers to the order type. Note that we include the empty order, $0$.}$,
    \item for countable $\alpha>0$, $HR_\alpha$ is the least class of all linear orders of the form $\sum_{i\in \zeta} L_i$ for $L_i\in \bigcup\{H_\beta : \beta<\alpha\}$ that is closed under finite sum.
  \end{enumerate}
  Let $r(L)$ be the least $\alpha$ with $L\in HR_\alpha$. Then $r(L)$ is the \emph{Hausdorff rank} of $\alpha$.
\end{definition}
Let $L$ be a linear order and $\sim$ an equivalence relation on $L$. We write $[x]_\sim$ for the equivalence class of $x$ modulo $\sim$. If $\sim$ is given from the context we sometimes omit the subscript. This is not to be mistaken with interval notation which we will aso use. In particular $[x,y]$ is the closed interval starting at $x$ and ending at $y$. Open ($(x,y)$), and half-open intervals ($[x,y)$, $(x,y]$) are defined analogously. We use the symbols $-\infty$ and $\infty$ to denote the start and end of linear order. For example, $(-\infty, x)$ is the initial segment of $L$ up to $x$.
\begin{definition}\label{def:block}
  Let $L$ be a linear order and $x,y\in L$. Then let
  \begin{enumerate}
    \item $x\sim_0 y$ if $x=y$,
    \item $x \sim_{1} y$ if $[x,y]$ or $[y,x]$ is finite,
    \item for $\alpha=\beta+1$, $x \sim_{\alpha} y$ if in $L /{\sim_{\beta}}$,
    $[x]_{\sim_\beta} \sim_1 [y]_{\sim_\beta}$,
    \item for $\alpha$ limit, $x \sim_{\alpha} y $ if for some  $\beta < \alpha$, $x \sim_{\beta} y$.
  \end{enumerate}
  The relation $\sim_1$ is commonly known as the \emph{block relation.} More generally, an \emph{$\alpha$-block} of a linear order $\mc L$ is an equivalence class modulo $\sim_{\alpha}$. The order $\mc L$ is said to be written in \emph{$\beta$-block form} if $L = \Sigma_{i \in I} L_i$ where each $L_i$ is a $\beta$-block. Note that a $\beta$-block, considered as a substructure, always has Hausdorff rank less than or equal to $\beta$.
\end{definition}
The next lemma follows easily by induction.
\begin{proposition}\label{prop:blockdefn}
  For countable $\alpha$, the relation $\sim_{\alpha}$ is $\Sigma_{2\alpha}$ definable.
\end{proposition}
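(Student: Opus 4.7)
I would prove the proposition by transfinite induction on $\alpha$, with the key idea being that the successor step just adds two quantifiers: an existential "guess a finite list of representatives" outside a universal "everything in the interval is $\sim_\beta$ to one of them."

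For the base case $\alpha=0$, the relation $\sim_0$ is just equality, which is $\Sigma_0$. For the successor step $\alpha=\beta+1$, I would observe that by definition $x \sim_{\beta+1} y$ exactly when only finitely many $\sim_\beta$-classes lie in the interval between $x$ and $y$; this is formalized by the infinitary formula
\[
\bigvee_{n \in \omega}\ \exists z_0,\dots,z_n\ \forall w\ \Bigl[\bigl((x\leq w\leq y)\vee(y\leq w\leq x)\bigr)\ \rightarrow\ \bigvee_{i\leq n} w\sim_\beta z_i\Bigr].
\]
By the induction hypothesis, $w\sim_\beta z_i$ is $\Sigma_{2\beta}$; the finite disjunction and the quantifier-free implication keep the matrix at $\Sigma_{2\beta}$; the universal quantifier over $w$ brings it to $\Pi_{2\beta+1}$; the existential block brings it to $\Sigma_{2\beta+2}$; and a countable disjunction over $n$ preserves $\Sigma_{2\beta+2}$, which is exactly $\Sigma_{2(\beta+1)}$.

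For the limit case, the defining clause already gives $x\sim_\alpha y \leftrightarrow \bigvee_{\beta<\alpha} x\sim_\beta y$. Since each disjunct is $\Sigma_{2\beta}$ with $2\beta<2\alpha$, and a $\Sigma_{2\alpha}$ formula is by definition a countable disjunction of existentials over formulas of lower $\Pi$-complexity, this assembles into a $\Sigma_{2\alpha}$ formula. (In particular one needs $2\beta<2\alpha$ for all $\beta<\alpha$, which holds under the standard ordinal convention used in the paper.)

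There is no real obstacle here beyond bookkeeping: the only point that requires any care is making sure the bounded universal quantifier "$w$ lies between $x$ and $y$" is handled as a quantifier-free guard rather than spent as an additional $\forall$, so that the successor step costs exactly two quantifier alternations and the count $2\alpha$ is preserved rather than drifting to $2\alpha+1$. Everything else is a routine application of the closure properties of the $\Sigma_\alpha/\Pi_\alpha$ hierarchy.
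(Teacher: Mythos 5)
Your proof is correct and is exactly the ``easy induction'' the paper invokes without writing out (the paper gives no proof of this proposition beyond the remark that it follows by induction). Your bookkeeping matches the intended argument: the successor step costs precisely one $\forall$ and one $\exists$ block over the $\Sigma_{2\beta}$ matrix, with $2\cdot(\beta+1)=2\cdot\beta+2$ by left-distributivity, and the limit case is the definition itself assembled as a countable disjunction, using that $\beta<\alpha$ implies $2\cdot\beta<2\cdot\alpha$.
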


\cref{def:hr,def:block} play nicely with each other.
\begin{proposition}\label{thm:finmodblock}
  For every linear order $L$, $r(L)=\alpha$ iff $\alpha$ is the least such that $L/{\sim_\alpha}$ is finite.
\end{proposition}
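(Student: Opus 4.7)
The plan is to prove by transfinite induction on $\alpha$ the stronger biconditional
\[L \in HR_\alpha \quad\Leftrightarrow\quad L/{\sim_\alpha}\ \text{is finite},\]
from which the proposition follows by taking the least $\alpha$ on either side; both sides are monotone in $\alpha$ because $\sim_\beta \subseteq \sim_\gamma$ for $\beta \leq \gamma$ (a routine induction). The base case $\alpha = 0$ is immediate from the definitions.

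A preliminary I will need throughout is a Convexity Lemma: if $C$ is a convex suborder of $L$, then $\sim_\beta^L$ restricted to $C \times C$ agrees with $\sim_\beta^C$. This is proved by induction on $\beta$, exploiting that any $\sim_\gamma^L$-class lying strictly between $[x]$ and $[y]$ in $L/{\sim_\gamma^L}$ (for $x, y \in C$) must, by convexity of equivalence classes, lie entirely within $C$ and thus fall under the inductive hypothesis. For the forward direction at a successor $\alpha = \beta + 1$, I write $L = N_1 + \cdots + N_m$ with each $N_j = \sum_{i \in \zeta} L_i^{(j)}$ and $L_i^{(j)} \in HR_{\beta'}$ for some $\beta' \leq \beta$; the inductive hypothesis and monotonicity make each $L_i^{(j)}/{\sim_\beta}$ finite. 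For $x \in L_i^{(j)}$ and $y \in L_{i'}^{(j)}$ in the same $N_j$, the interval $[x,y]_L$ meets only finitely many $L_k^{(j)}$, each contributing finitely many $\sim_\beta^L$-classes by the Convexity Lemma; hence $x \sim_{\beta+1} y$ and $|L/{\sim_{\beta+1}}| \leq m$. The forward direction at a limit $\alpha$ is the same argument, using that the maximum of finitely many ordinals below a limit is still below the limit.

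For the reverse direction at successor $\alpha = \beta+1$, I decompose $L = C_1 + \cdots + C_m$ into $\sim_{\beta+1}$-classes. Each $\sim_\beta$-class $D \subseteq C_j$ has $D/{\sim_\beta^D}$ trivial by the Convexity Lemma, so $D \in HR_\beta$ by the inductive hypothesis; and $C_j/{\sim_\beta}$ is a single $\sim_1$-class in $L/{\sim_\beta}$, hence of order type in $\{n, \omega, \omega^*, \zeta\}$, so $C_j$ is (after padding with empty orders when needed) a $\zeta$-sum of $HR_\beta$-orders and $C_j \in HR_{\beta+1}$. The main obstacle is the reverse direction at a limit $\alpha$, since there need not be any $\beta < \alpha$ with $L/{\sim_\beta}$ finite (witness $\omega^\omega$, whose $\sim_n$-quotient remains $\omega^\omega$ for every finite $n$). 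My plan there is to fix a $\sim_\alpha$-class $C$, pick $x_0 \in C$, set $D_\beta = [x_0]_{\sim_\beta}$, and note $C = \bigcup_{\beta<\alpha} D_\beta$. Using that every countable limit ordinal has cofinality $\omega$, I pick $\beta_0 < \beta_1 < \cdots$ cofinal in $\alpha$ and write $D_{\beta_{n+1}} = L_n + D_{\beta_n} + R_n$; collecting yields $C = \cdots + L_1 + L_0 + D_{\beta_0} + R_0 + R_1 + \cdots$ as a genuine $\zeta$-sum. Each summand is convex in some $D_{\beta_{n+1}}$, which has trivial $\sim_{\beta_{n+1}}$-quotient, so by the Convexity Lemma and inductive hypothesis lies in $HR_{\beta_{n+1}} \subseteq \bigcup_{\gamma<\alpha} HR_\gamma$; hence $C \in HR_\alpha$, completing the induction.
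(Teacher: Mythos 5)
Your proof is correct, and its two halves relate to the paper's argument in different ways. The forward direction ($L \in HR_\alpha \Rightarrow L/{\sim_\alpha}$ finite) is essentially the paper's: both decompose $L$ as a finite sum of $\zeta$-sums of lower-rank orders and show that two points in the same summand have only finitely many lower-level classes between them; you merely make explicit, as your Convexity Lemma, the transfer of $\sim_\beta$ between $L$ and its convex subintervals, which the paper uses silently when it passes from $(L_{k,i},L_{k,i+j})/{\sim_\alpha}=1$ to $x \sim_\alpha y$. The genuine divergence is the converse. The paper dismisses it in one line ``by contraposition,'' which tacitly requires that a finite $\sim_\alpha$-quotient already forces $L$ to be scattered of rank at most $\alpha$ --- equivalently, the unproved remark at the end of \cref{def:block} that a $\beta$-block, as a substructure, has Hausdorff rank at most $\beta$ (without this, contraposition does not exclude the possibility that $r(L)$ is simply undefined). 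You prove this fact directly: the successor case by assembling each $\sim_{\beta+1}$-class from its $\sim_\beta$-classes along a block of type $n$, $\omega$, $\omega^*$, or $\zeta$, and the limit case --- the only delicate point, which you correctly isolate with the $\omega^\omega$ example --- by writing a $\sim_\alpha$-class as the $\zeta$-sum $\cdots + L_1 + L_0 + D_{\beta_0} + R_0 + R_1 + \cdots$ along an $\omega$-cofinal sequence $(\beta_n)$. What each approach buys: the paper's route is shorter because it outsources the blocks-have-bounded-rank fact to a passing remark; yours is self-contained, supplies a proof of that remark, and, by establishing the full biconditional $L \in HR_\alpha \Leftrightarrow L/{\sim_\alpha}$ finite, handles non-scattered $L$ (where both sides fail for every $\alpha$) without special pleading. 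The only cosmetic omission is the last assembly step --- $L$ is the finite sum of its finitely many $\sim_\alpha$-classes, so closure under finite sums yields $L \in HR_\alpha$ --- which your setup makes immediate.
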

\begin{proof}
  $(\Ra)$ Assume $r(L) = \alpha$. We first claim that for any $\beta < \alpha$, $L/{\sim_\beta}$ is infinite.  If not, then for some $\beta < \alpha$ one can write $L$ as a finite sum of $\beta$-blocks. Therefore $L$ will be a finite sum of structures of rank $\leq \beta$, and so $r(L) \leq \beta$
  
  We now show by induction that $L/{\sim_\alpha}$ is finite. Clearly if $r(L)=0$ then the theorem holds. Assume it holds for $\beta<\alpha$ and that $r(L) = \alpha$. We know that for some $n$, $L = \Sigma_{i = 1}^n L_i$, and that each $L_i$ is a sum of the form $\Sigma_{j \in \zeta} L_{i,j}$ where for each $i$ and $j$, $r(L_{i,j}) < \alpha$.
  If $L/{\sim_\alpha}$ is greater than $n$, then there will be some $k<n$ and at least two distinct equivalence classes $[x]_{\sim \alpha}$, $[y]_{\sim \alpha}$ such that $x,y \in L_k$. Then $x,y \in (L_{k,i}, L_{k,i+j})$ for some $i$ and $j$. But the Hausdorff rank of $(L_{k,i}, L_{k,i+j})$ is strictly less than $\alpha$ since it is a finite sum of structures having Hausdorff rank $\beta<\alpha$. Then by hypothesis $(L_{k,i},L_{k,i+j})/{\sim_\alpha}=1$ and thus $x \sim_{\alpha}y$, a contradiction.

  $(\La)$ Follows by contraposition and $(\Ra)$.
\end{proof}



\section{Upper bounds}\label{sec:upperbound}
The goal of this section is to prove the following theorem.
\begin{theorem}\label{thm:mainthm}
  Let $L$ be a linear order with $r(L)=\alpha$. Then $L$ has a $\dSinf{2\alpha+1}$ Scott sentence.
\end{theorem}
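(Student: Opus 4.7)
The plan is to prove the theorem by transfinite induction on $\alpha = r(L)$, directly constructing a $\dSinf{2\alpha+1}$ Scott sentence $\phi_L$ for $L$. The base case $\alpha = 0$ is immediate: $L$ is finite of some size $n$, and the conjunction of the $\Sinf{1}$ sentence $\exists x_1, \ldots, x_n (x_1 < \cdots < x_n)$ with the $\Pinf{1}$ sentence $\forall x_0, \ldots, x_n \bigvee_{i < j} x_i = x_j$ serves as a $\dSinf{1}$ Scott sentence.

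For the inductive step, by \cref{thm:finmodblock} the quotient $L/{\sim_\alpha}$ is a finite linear order of size $n$, with representatives $a_1 < \cdots < a_n$. Set $\phi_L = \phi^\exists \wedge \phi^\forall$, where $\phi^\exists \in \Sinf{2\alpha+1}$ asserts the existence of witnesses $x_1 < \cdots < x_n$ satisfying (i) $x_i \not\sim_\alpha x_j$ for all $i \neq j$, a $\Pinf{2\alpha}$ condition by \cref{prop:blockdefn}, and (ii) a $\Pinf{2\alpha}$ formula stating that each $\alpha$-block $[x_i]_{\sim_\alpha}$ is isomorphic as a substructure to $[a_i]_{\sim_\alpha}$. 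The universal conjunct $\phi^\forall \in \Pinf{2\alpha+1}$ asserts that any $n+1$ elements contain a $\sim_\alpha$-related pair, a $\Sinf{2\alpha}$ condition. A standard back-and-forth argument then verifies that $\phi_L$ characterises $L$ up to isomorphism, and its complexity is $\dSinf{2\alpha+1}$ by inspection.

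The main obstacle is clause (ii): encoding the isomorphism type of an $\alpha$-block by a $\Pinf{2\alpha}$ formula in one parameter. A single $\alpha$-block can itself have Hausdorff rank $\alpha$---for example, $\omega$ is a single $\sim_1$-block of rank $1$---so the induction hypothesis on rank does not apply directly. I would handle this by strengthening the inductive claim to: every $\alpha$-block admits a $\Pinf{2\alpha}$ Scott sentence with a single naming parameter. For successor $\alpha = \beta+1$, an $\alpha$-block decomposes as a sum $\sum_{i \in I} B_i$ of $\beta$-blocks where $I \in \{n, \omega, \omega^*, \omega^* + \omega\}$ is one of the four possible $\sim_1$-types, and each $B_i$ is handled by the parametrised inductive hypothesis; relativising quantifiers to the $\alpha$-block of the naming parameter uses \cref{prop:blockdefn} to stay within $\Pinf{2\alpha}$. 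For limit $\alpha$, an $\alpha$-block equals the nested union $\bigcup_{\beta < \alpha} [x]_{\sim_\beta}$ of $\beta$-blocks, whose $\Pinf{2\alpha}$ description is a countable conjunction over $\beta < \alpha$ of the parametrised $\Pinf{2\beta}$ Scott sentences of the constituent $[x]_{\sim_\beta}$.
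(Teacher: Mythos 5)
Your overall architecture (finitely many $\alpha$-blocks via \cref{thm:finmodblock}, an existential conjunct naming block representatives, a universal conjunct bounding the number of blocks) is reasonable, but the strengthened inductive claim on which everything rests --- that every $\alpha$-block admits a $\Pinf{2\alpha}$ Scott sentence with a single naming parameter --- is false, already at $\alpha=1$. The order $\omega$ is a single $1$-block, and if $(\omega,k)$ had a $\Pinf{2}$ Scott sentence $\phi(x)$, then $\exists x\,\phi(x)$ would be a $\Sinf{3}$ Scott sentence for $\omega$, which is impossible: the relativizations of \cref{prop:indexsetomega} show the index set of $\omega$ is $\Pi^0_3$-complete relative to every oracle, ruling out any boldface $\Sinf{3}$ Scott sentence. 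One can also see it by hand: $(\omega,k)\leq_2(\omega+\zeta,m)$ for $m$ the $k$-th element of the initial $\omega$, so every $\Pinf{2}$ formula true of $k$ in $\omega$ holds of $m$ in $\omega+\zeta$. The obstruction recurs at every level: $\omega^\alpha$ is a single $\alpha$-block, and a $\Pinf{2\alpha}$ Scott sentence for $(\omega^\alpha,a)$ would give a $\Sinf{2\alpha+1}$ Scott sentence for $\omega^\alpha$, which combined with its $\Pinf{2\alpha+1}$ Scott sentence would by \cref{thm:rachael} yield a $\dSinf{2\alpha}$ Scott sentence --- contradicting optimality, since by Ash's computation $\omega^{\alpha+1}$ and $\omega^\alpha$ are $\leq_{2\alpha}$-equivalent in both directions. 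The true complexity of a block description is $\Pinf{2\alpha+1}$ (this is essentially \cref{thm:simpleSS}), one level higher than your construction requires.

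This off-by-one is not cosmetic: with $\Pinf{2\alpha+1}$ block descriptions your existential conjunct becomes $\Sinf{2\alpha+2}$, so the direct syntactic construction only produces a $\Sinf{2\alpha+2}$ Scott sentence and, separately, a $\Pinf{2\alpha+2}$ one --- which is exactly where the paper's argument lands (\cref{cor:sigma2alpha2ss} and \cref{thm:pi2alpha2ss}, the latter proved by a freeness analysis, not by writing down a sentence). The descent from that pair to $\dSinf{2\alpha+1}$ is then achieved not ``by inspection'' but by invoking the nontrivial interpolation theorem of Alvir, Knight, and McCoy (\cref{thm:rachael}); no known direct construction bypasses it. Your limit case has a second, independent gap: the conjunction over $\beta<\alpha$ of pointed Scott sentences for the $[x]_{\sim_\beta}$ fixes the isomorphism type of each term of the increasing chain, but not the coherence of the inclusions, and the union of a chain is not in general determined by the isomorphism types of its terms alone; pinning down $[x]_{\sim_\alpha}$ at limit $\alpha$ requires a genuine back-and-forth argument, as in the limit case of \cref{thm:simple1}. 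Repairing your plan would in effect mean reproving \cref{thm:simpleSS} and then appealing to \cref{thm:rachael}, i.e., following the paper's route.
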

The theorem is proved by induction on the Hausdorff rank of $L$. The following two definitions are central to our proof.
\begin{definition}
A \emph{finite partition} of a linear order $L$ is a finite tuple $(a_0, \ldots, a_{n-1}) \in L$ such that $L = L_0 + \{a_0\} + L_1 + \cdots + \{ a_{n-1}\} + L_{n}$. Sometimes we say that $L$ has been partitioned into the intervals $L_i$ for $0 \leq i \leq n$.
\end{definition}
\begin{definition}
A linear order $L$ of Hausdorff rank $\alpha$ is \emph{simple} if $L = \sum_{i \in \omega} L_i + \sum_{j\in \omega^*} L_j$ with $\lim_{i \rightarrow \omega} r(L_i)+1 = \alpha$ or $\lim_{j \rightarrow \omega^*} r(L_j)+1 = \alpha$.
If $\sum_{i \in \omega} L_i\cong 0$ ($\sum_{j\in\omega^*} L_j\cong 0$), then we say that $L$ is \emph{simple of type $\omega$} (\emph{simple of type $\omega^*$}).
\end{definition}
Simple linear orders are ``simple'' in the sense that they serve as building blocks of other linear orders of their Hausdorff rank. In particular, every linear order of Hausdorff rank $\alpha$ can be finitely partitioned, so that each of the intervals is simple or of lower Hausdorff rank. Simple linear orders play a central role in the proof of \cref{thm:mainthm}. Their key property is that they have Scott sentences of lesser complexity than usual linear orders of their rank. We will prove the following theorem in the process of proving \cref{thm:mainthm}. 
\begin{theorem}\label{thm:mainthm2}
Let $L$ be a simple linear order with $r(L) = \alpha$. Then $L$ has a $\Pinf{2 \alpha + 1}$ Scott sentence.
\end{theorem}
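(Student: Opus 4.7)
The plan is to prove Theorem~\ref{thm:mainthm2} by transfinite induction on $\alpha$, run simultaneously with the induction for Theorem~\ref{thm:mainthm}. The inductive hypothesis is that every linear order of rank $\beta<\alpha$ has a $d$-$\Sigma_{2\beta+1}$ Scott sentence, which equivalently can be taken as a formula in both $\Sigma_{2\beta+2}\subseteq\Sigma_{2\alpha}$ and $\Pi_{2\beta+2}\subseteq\Pi_{2\alpha}$.

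By symmetry I would treat the type $\omega$ case $L=\sum_{i\in\omega}L_i$ with $\sup_i r(L_i)+1=\alpha$ (the $\omega^*$ case and the two-sided case being analogous). I would construct directly a $\Pi_{2\alpha+1}$ Scott sentence $\Phi_L$ of the form
\begin{equation*}
\Phi_{\mrm{LO}}\;\wedge\;\bigwedge_n \exists y_0<\cdots<y_{n-1}\,\chi_n(\bar y)\;\wedge\;\forall x\bigvee_n\exists y_0<\cdots<y_{n-1}\bigl(x\le y_{n-1}\wedge\chi_n(\bar y)\bigr),
\end{equation*}
where $\Phi_{\mrm{LO}}$ is the finitary linear-order axiomatization and $\chi_n(\bar y)$ is the finite conjunction over $0\le j<n$ of the relativized Scott sentence of $L_j$ on the interval $(y_{j-1},y_j)$ (taking $y_{-1}=-\infty$). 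Using the $\Sigma_{2\alpha}$-form of each relativized inner Scott sentence available from the inductive hypothesis, $\chi_n$ is a finite conjunction of $\Sigma_{2\alpha}$ formulas hence $\Sigma_{2\alpha}$; existential wrapping preserves $\Sigma_{2\alpha}$; the inner $\bigvee_n$ stays $\Sigma_{2\alpha}$; and the outer $\bigwedge_n$ and $\forall x$ then lift the overall complexity precisely to $\Pi_{2\alpha+1}$.

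The remaining task is to verify that $\Phi_L$ is indeed a Scott sentence, i.e.\ that any countable $M\models\Phi_L$ is isomorphic to $L$. I would do this by a back-and-forth argument: the first clause provides, for each $n$, a partition of an initial segment of $M$ into $n$ pieces of the prescribed isomorphism types; the second clause ensures every element of $M$ lies in some such initial segment, so $M$ is exhausted by a nested sequence of initial segments of the correct type. The inductive Scott sentences pin down each $L_j$ up to isomorphism, and since $L_0+\cdots+L_{n-1}$ is itself a rank-$<\alpha$ order with a Scott sentence provided by the inductive hypothesis, any two witnessing tuples of length $n$ produce isomorphic initial segments of $M$, allowing coherent extension of a partial isomorphism along the exhaustion.

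The principal obstacle I anticipate is the complexity bookkeeping: naively using the $\Pi_{2\alpha}$-form of the inner Scott sentences under the existential quantifier would inflate $\Phi_L$ to $\Pi_{2\alpha+2}$, overshooting the target by a full level. The proof therefore crucially exploits Theorem~\ref{thm:rachael} applied to the inductive $d$-$\Sigma_{2\beta+1}$ sentences to extract the $\Sigma_{2\alpha}$-form, and the back-and-forth verification needs care in handling the noncanonical choice of witnesses for successive $n$, which is resolved by invoking the inductive hypothesis on the rank-$<\alpha$ initial sums $L_0+\cdots+L_{n-1}$.
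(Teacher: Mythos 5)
Your complexity bookkeeping is correct (a $d$-$\Sigma_{2\beta+1}$ Scott sentence for a rank-$\beta$ piece is already both $\Sigma_{2\beta+2}$ and $\Pi_{2\beta+2}$, so it sits inside $\Sigma_{2\alpha}$ without even needing \cref{thm:rachael}), and your route is genuinely different from the paper's: the paper never writes down a sentence in the inductive step at all — it proves (\cref{thm:simpleSS}) that no tuple of $M$ is $2\alpha$-free and invokes Montalb\'an's characterization (\cref{thm:ssnopar}), with the real inductive content carried by \cref{thm:simple1}, \cref{thm:maxwithparameters} and \cref{lem:akpart}; the explicit-sentence strategy is used only in the base case (\cref{thm:hr1dsicom3}). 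But your verification has a genuine gap at exactly the point you flag. From $M\models\Phi_L$ you get, for each $n$, a capped initial segment of $M$ isomorphic to $B_n+1$, where $B_n = A_0+1+A_1+1+\cdots+A_{n-1}$ (note also that the pieces $L_j$ of a simple decomposition need not have endpoints, so the $A_j$ must be the interval types cut out by a chosen increasing cofinal sequence of \emph{points} of $L$, not the $L_j$ themselves). These capped segments form a chain exhausting $M$, each abstractly isomorphic to a capped initial segment of $L$. That is not enough: the witnessing isomorphisms for different $n$ need not cohere, and an increasing union of initial segments of the correct isomorphism types need not be isomorphic to $L$. To glue, you need a rigidity/extension property of $L$ itself: whenever $(-\infty,w]\cong(-\infty,w']$ are capped initial segments of $L$, every further cap above $w$ has a counterpart above $w'$ with isomorphic remainder — equivalently, isomorphisms between capped initial segments can be corrected so as to extend a previously chosen partial isomorphism. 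Since left cancellation fails for linear orders in general ($A+X\cong A+Y$ does not imply $X\cong Y$), this is a substantive claim, not an automatic one.

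Your proposed fix — invoking the inductive hypothesis on the rank-$<\alpha$ sums $L_0+\cdots+L_{n-1}$ — does not supply this: the inductive Scott sentences only identify the isomorphism \emph{type} of each capped segment; they say nothing about where a copy of $B_n+1$ can sit inside $B_m+1$ (it may sit at a non-canonical position), nor that two witnessing tuples of different lengths are automorphic over the part already matched. This missing extension property is precisely what the paper's parameter machinery delivers: \cref{thm:maxwithparameters} produces a tuple $\bar c$, stable under refinement, over which $\leq_{2\gamma+1}$ implies isomorphism, and \cref{lem:akpart} localizes back-and-forth relations to the intervals of a partition; the proof of \cref{thm:simpleSS} uses exactly these to convert ``isomorphic initial segments'' into ``automorphic tuples''. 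So to complete your argument you would in effect have to re-prove an analogue of \cref{thm:simple1}/\cref{thm:maxwithparameters}, at which point the explicit sentence buys nothing over the paper's abstract route. A smaller omission: you must also rule out models of $\Phi_L$ with a maximum element, which requires a rank/block-counting argument showing $B_m+1$ cannot embed as an initial segment of $B_n+1$ for $m$ large (using that the ranks $r(A_j)$, or the number of $\beta$-blocks when $\alpha=\beta+1$, grow unboundedly).
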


Before proceeding with the proofs of \cref{thm:mainthm,thm:mainthm2} we observe some easy facts that we will use without referencing.
\begin{proposition}
  Let $M, N$ be linear orders where $r(M)=\alpha$ and $M\leq_{2\alpha+1} N$. Then
  \begin{enumerate}
    \item $r(N)=\alpha$,
    \item if $M$ is simple, then $N$ is simple of the same type.
  \end{enumerate}
\end{proposition}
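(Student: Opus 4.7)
The plan is to prove both claims by transferring $\Pi_{2\alpha+1}$ sentences from $M$ to $N$ via $M\leq_{2\alpha+1}N$. By \cref{prop:blockdefn}, the relation $\sim_\beta$ is $\Sigma_{2\beta}$-definable, and a short ordinal calculation shows $2\beta+2\leq 2\alpha+1$ whenever $\beta<\alpha$, so every $\Pi_{2\beta+2}$ sentence is also $\Pi_{2\alpha+1}$.

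For (1), \cref{thm:finmodblock} rephrases $r(M)=\alpha$ as the conjunction of ``$M/{\sim_\alpha}$ has some finite cardinality $k$'' and ``$M/{\sim_\beta}$ is infinite for every $\beta<\alpha$.'' Both are $\Pi_{2\alpha+1}$: the first is $\forall y_0,\dots,y_k\bigvee_{i<j}y_i\sim_\alpha y_j$, a universal over a $\Sigma_{2\alpha}$ formula; the second is the countable conjunction $\bigwedge_{\beta<\alpha,\,n<\omega}\exists y_0,\dots,y_n\bigwedge_{i\neq j}\neg(y_i\sim_\beta y_j)$, whose individual conjuncts are $\Pi_{2\beta+2}\subseteq\Pi_{2\alpha+1}$. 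Transferring both truths to $N$ and re-invoking \cref{thm:finmodblock} yields $r(N)=\alpha$.

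For (2), suppose $M$ is simple of type $\omega^*$, with $M=\sum_{i\in\omega}M_i$ and $\lim(r(M_i)+1)=\alpha$. I would isolate three $\Pi_{2\alpha+1}$ properties that $M$ satisfies: (a) $M$ is a single $\alpha$-block, i.e.\ $\forall x,y\,(x\sim_\alpha y)$, since every interval of $M$ lies in a finite sub-sum of the $M_i$'s and hence has rank below $\alpha$; (b) $M$ has no maximum, i.e.\ $\forall x\exists y\,(y>x)$, since cofinitely many $M_i$ are non-empty; and (c) for every $\beta<\alpha$ and every $x\in M$, some $y>x$ satisfies $\neg(x\sim_\beta y)$, because the lim condition supplies some $M_{i_0}$ with $r(M_{i_0})\geq\beta$ past $x$, whence any $y$ beyond $M_{i_0}$ gives $[x,y]\supseteq M_{i_0}$ and therefore $r([x,y])\geq\beta$. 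Condition (c) is a countable conjunction of $\Pi_{2\beta+2}$ formulas, hence $\Pi_{2\alpha+1}$. Transferring (a)--(c) to $N$ and combining with $r(N)=\alpha$ from part~(1), I would then build the required decomposition of $N$: fix an enumeration $n_0,n_1,\dots$ of $N$ and a sequence $\beta_i\nearrow\alpha$ with $\beta_i<\alpha$ (for successor $\alpha$, take $\beta_i=\alpha-1$ constantly); inductively choose $x_0<x_1<\cdots$ in $N$ so that each $x_i$ dominates both $x_{i-1}$ and $n_i$ and so that $r([x_{i-1},x_i])\geq\beta_i$, which is possible by (b) and (c). Setting $N_0=(-\infty,x_0]$ and $N_i=(x_{i-1},x_i]$ for $i\geq 1$, (a) forces $r(N_i)<\alpha$, cofinality gives $N=\sum_{i\in\omega}N_i$, and the choice of $\beta_i$ yields $\lim(r(N_i)+1)=\alpha$, so $N$ is simple of type $\omega^*$. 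The type $\omega$ case is dual.

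The most delicate step is the sufficiency direction of (2): the transferred $\Pi_{2\alpha+1}$ conditions are purely abstract, and one still has to witness them by an explicit $\omega$-indexed decomposition of $N$, which is what the inductive construction above accomplishes. Everything else reduces to routine bookkeeping of quantifier complexity together with the ordinal inequality $2\beta+2\leq 2\alpha+1$.
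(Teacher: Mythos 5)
Your part (1) is correct and essentially the paper's argument (transfer $\Pi_{2\alpha+1}$ sentences built from the $\Sigma_{2\alpha}$-definable $\sim_\beta$'s, then apply \cref{thm:finmodblock}; the inequality $2\beta+2\le 2\alpha$ for $\beta<\alpha$ is fine). But part (2) has a genuine gap: your transferred properties (a)--(c) say nothing about the \emph{left end} of $N$, and the step ``(a) forces $r(N_i)<\alpha$'' is false for $N_0=(-\infty,x_0]$. Property (a) only gives that every \emph{closed} interval $[u,x_0]$ has rank $<\alpha$; an initial segment unbounded below can still have rank $\alpha$ as a supremum, namely when ranks grow going leftward. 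Concretely, take $\alpha=2$ and $N=\zeta\cdot\zeta$: any two points are separated by finitely many $\zeta$-blocks, so $N$ is a single $2$-block and satisfies (a); it satisfies (b) and (c) and has Hausdorff rank $2$; yet it is not simple of type $\omega$ (indeed not simple at all), since every nonempty initial segment contains infinitely many $\zeta$-blocks and hence has rank $2$, so no decomposition $N=\sum_{i\in\omega}N_i$ with $r(N_i)<\alpha$ exists. Your argument as written would ``prove'' $\zeta^2$ simple. The paper blocks exactly this with a third sentence you omitted: $\exists x\,\forall y\,(y<x\rightarrow y\sim_{\beta_0}x)$ (``there is a first $\beta_0$-block''), which is $\Sigma_{2\beta_0+2}$ and hence expressible in $\Pi_{2\alpha+1}$, is true of $M=\sum_{i\in\omega}M_i$, and is false of $\zeta^2$; transferring it is what controls $N_0$, and with it your inductive construction can be repaired.

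A secondary omission: ``simple'' includes the mixed type $\omega+\omega^*$, which is not dual to the one-sided cases and needs its own sentences --- the paper's Case 2 uses $\forall x\forall y\forall z\,(x\sim_\alpha y\lor x\sim_\alpha z)$ (at most two $\alpha$-blocks) together with both a first-block and a last-block sentence. Two smaller remarks: your type labels are swapped relative to the paper's proof (its Case 1, an $\omega$-sum with conditions like your (b), (c), is what it calls type $\omega$; note the paper's own definition has the parenthetical reversed, so this is cosmetic); and, to your credit, your explicit reconstruction of the decomposition of $N$ addresses the sufficiency step that the paper's proof leaves entirely implicit --- the paper merely lists the transferred sentences and stops --- so once the first-block sentence is added and the mixed case is treated, your write-up would actually be more complete than the original.
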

\begin{proof}
We prove (2) of the proposition by cases, depending on whether $M$ is simple of type $\omega$ or $\omega+\omega^*$. The case when $M$ is simple of type $\omega^*$ is similar to the first case and thus omitted. Item (1) of the proposition is proven along the way.

\emph{Case 1:} Suppose $M = \Sigma_{i \in \omega} M_i$ where each $M_i$ is of rank $\beta_i$. Note that each of the following sentences is $\Pinf{2 \alpha +1}$, so since they are true of $M$ they will also be true of $N$. \\
\begin{align*}
   \text{ (1) } & \forall x \forall y (x \sim_{\alpha} y) \\
   \text{ (2) } & \bigwedge_{n \in \omega} \exists x_0 \ldots x_n ( x_0 \not \sim_{\beta_0} x_1 \not \sim_{\beta_{1}} \cdots \not \sim_{\beta_{n-1}} x_n ) \land (x_0 < \cdots < x_n) \\
   \text{ (3) } & \exists x \forall y (y < x \rightarrow y \sim_{\beta_0} x)
\end{align*}
    Sentence (1) states that there is at most one $\alpha$-block.  If $\alpha$ is a limit ordinal, sentence (2) says that $M$ contains an increasing sequence of structures whose ranks are cofinal in $\alpha$, or that there are infinitely many $\beta$-blocks if $\alpha = \beta +1$. Sentence (3) says that there is a first $\beta_0$-block.
    Observe (1) implies $r(N) \leq \alpha$, and (2) implies $r(N) \geq \alpha$.

\emph{Case 2:} Suppose that $M$ is simple of type $\omega + \omega^*$, where $M = \Sigma_{i \in \omega} M_i + \Sigma_{i \in \omega^*} M_i'$ and each $M_i$, $M_i'$ is of rank $\beta_i$, $\beta_i'$, respectively. Then the following sentences are $\Pi_{2 \alpha + 1}$ and true of $M$:
\begin{align*}
    \text{ (1) } & \forall x \forall y \forall z (x \sim_\alpha y \lor x \sim_{\alpha} z) \\
    \text{ (2) } & \bigwedge_{n \in \omega} \exists x_0 ... x_n \exists x_0' ... x_n' ( x_0 \not \sim_{\beta_0} x_1 \not \sim_{\beta_1} \cdots &\not \sim_{\beta_{n-1}} x_n \not \sim_{\beta_n} x_n' \not \sim_{\beta'_{n-1}} \cdots \not \sim_{\beta_0'} x_0') \\
    &  & \land (x_0 < \cdots < x_n < x_n' < \cdots x_0')\\
    \text{ (3) } & \exists x \forall y (y < x \rightarrow y \sim_{\beta_0} x)\\
    \text{ (4) } & \exists x \forall y (y > x \rightarrow y \sim_{\beta_0'} x)
\end{align*}
\end{proof}

\subsection{Base case}

The proof we give for the base case of \cref{thm:mainthm} is quite different from the ones for successor and limit cases because we want to show something stronger, namely that every linear order of Hausdorff rank $1$ has a computable $d$-$\Sigma_3$ Scott sentence, and if simple or of order type $m+\zeta+n$ where $m,n\geq 0$, has a computable $\Pi_3$ Scott sentence. In \cref{sec:tightness} we show that these Scott sentences are optimal. 
\begin{theorem}\label{thm:hr1dsicom3}
    Let $L$ be of Hausdorff rank $1$. Then it has a $\dSicom{3}$ Scott sentence.
\end{theorem}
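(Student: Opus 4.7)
The plan is to leverage that a Hausdorff rank $1$ linear order $L$ has only finitely many $\sim_1$-classes and that each class has one of only four possible isomorphism types, so $L$ itself is determined up to isomorphism by a short finite invariant. By Proposition \ref{thm:finmodblock}, $L/{\sim_1}$ is finite, and a case analysis according to whether a block has a first or last element shows that every $\sim_1$-class is isomorphic to one of $n$ for $n \geq 1$, $\omega$, $\omega^*$, or $\zeta$. Thus $L$ is determined up to isomorphism by the finite left-to-right sequence $(\tau_0, \ldots, \tau_{k-1})$ of its block types, and the Scott sentence only needs to encode this invariant.

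I would next compute the complexity of the basic block-level formulas. The relation $x \sim_1 y$ is $\Sicom{2}$, expressing that $x$ and $y$ are connected by a finite chain of immediate successors; the formula ``$y$ has no predecessor in $L$'' is $\Picom{2}$. The critical observation is that the block of $x$ has a first element if and only if there exists $y \sim_1 x$ with no predecessor in $L$, since any predecessor of a block element must itself lie in that block. This makes ``block of $x$ has a first element'' a $\Sicom{3}$ formula, and likewise for ``has a last''; their negations are $\Picom{3}$. ``Block of $x$ has at least $n$ elements'' is $\Sicom{2}$ and ``at most $n$'' is $\Picom{2}$. Combining these, I associate to each admissible type $\tau$ a positive formula $\phi^+_\tau(x) \in \Sicom{3}$ and a negative formula $\phi^-_\tau(x) \in \Picom{3}$ whose conjunction is equivalent to ``the block of $x$ has type $\tau$''.

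The Scott sentence is then $\psi = \psi_\Sigma \wedge \psi_\Pi$, where $\psi_\Sigma$ is the $\Sicom{3}$ sentence
\[\exists x_0 < x_1 < \cdots < x_{k-1} \ \Bigl(\bigwedge_{i \neq j} x_i \not\sim_1 x_j \ \wedge\ \bigwedge_i \phi^+_{\tau_i}(x_i)\Bigr)\]
and $\psi_\Pi$ is the $\Picom{3}$ conjunction of the ``at most $k$ blocks'' sentence $\forall y_0, \ldots, y_k\, \bigvee_{i \neq j} y_i \sim_1 y_j$ with $\forall x_0 < \cdots < x_{k-1}\, (\phi^+(\bar x) \to \phi^-(\bar x))$, where $\phi^+$ and $\phi^-$ are the obvious conjunctions. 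Verifying that $\psi$ is a Scott sentence is then routine: any $M \models \psi$ has exactly $k$ blocks (from the lower bound given by $\psi_\Sigma$ and the upper bound in $\psi_\Pi$), and the universal implication in $\psi_\Pi$ forces every tuple satisfying the positive features to also satisfy the negative ones, pinning down each block's type to the prescribed $\tau_i$. Computability is immediate since $k$ is finite and the only infinite disjunction used is the one defining $\sim_1$.

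The main obstacle is controlling the quantifier complexity so that $\psi$ truly lands in $\dSicom{3}$ and does not blow up to $\Sicom{4}$. The key technique is to never place an existential in front of a $d\text{-}\Sicom{3}$ formula: all positive features are gathered under the single $\exists \bar x$ prefix in $\psi_\Sigma$, and negative features are introduced only in the consequent of an implication under the universal quantifiers of $\psi_\Pi$, where $\phi^+ \to \phi^-$ is a disjunction of two $\Picom{3}$ formulas and hence itself $\Picom{3}$.
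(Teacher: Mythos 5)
Your architecture is essentially the paper's: write $L$ in $1$-block form with $k$ blocks, each of type $n$, $\omega$, $\omega^*$ or $\zeta$; attach to each type a $\Sicom{3}$ positive feature and a $\Picom{3}$ negative feature; assert the positive features of an existential witness tuple and the negative features under a universal quantifier, yielding a $\dSicom{3}$ sentence. Your block-level complexity computations are also correct (in particular the observation that ``the block of $x$ has a first element'' is $\Sicom{3}$, because an immediate predecessor of any element automatically lies in its block).

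However, your universal conjunct fails as written: in $\forall x_0<\cdots<x_{k-1}\,(\phi^+(\bar x)\to\phi^-(\bar x))$ the tuple is only required to be increasing, not to meet $k$ distinct blocks, so the implication must also hold for tuples lying inside a single block, and there it is false in $L$ itself. Concretely, take $L=2+3$, so $k=2$, $\phi^+_{\tau_0}$ says ``the block has at least $2$ elements'' and $\phi^-_{\tau_0}$ says ``at most $2$''. Choosing $x_0<x_1$ to be two elements of the $3$-block, the antecedent $\phi^+_{\tau_0}(x_0)\wedge\phi^+_{\tau_1}(x_1)$ holds while $\phi^-_{\tau_0}(x_0)$ fails, so $L\not\models\psi_\Pi$ and your $\psi$ is not a Scott sentence for $L$. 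The same happens with infinite blocks, e.g.\ $L=\omega+\zeta$ with $x_0<x_1$ both in the $\omega$-block: the antecedent holds (the $\omega$-block has a first element, and $\phi^+_\zeta$ is trivial) but $\phi^-_\zeta(x_1)$, ``no first element'', fails. The repair is exactly what the paper does: guard the universal implication by pairwise non-equivalence, taking as antecedent $\bigwedge_{i\neq j}x_i\not\sim_1 x_j$ rather than (or in addition to) $\phi^+(\bar x)$. Since $x_i\not\sim_1 x_j$ is $\Picom{2}$, its negation is $\Sicom{2}$ and the guarded implication is still $\Picom{3}$; combined with the ``at most $k$ blocks'' conjunct, any such tuple consists of representatives of the $k$ blocks in left-to-right order, the witness tuple from $\psi_\Sigma$ then pins down each block's type, and the rest of your verification goes through verbatim. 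One further small omission: conjoin the linear order axioms (finitary universal, hence $\Picom{1}$), as the paper does with $\phi_{Ax}$, since a Scott sentence must also exclude structures in the language that are not linear orders.
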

\begin{proof}
We begin by defining some useful auxiliary formulas. First, note that the successor relation
$$S(x,y) \text{ iff } x< y \land \forall z \left(x\leq z\leq y \ra x=z\lor z=y\right)$$
is $\Picom{1}.$ Next, define $$\phi^r(x)= \forall y \left( \left(y \geq x \land y \sim_1 x \right) \ra \exists z \ S(y,z) \right).$$
This $\Picom{3}$ formula says that there are infinitely many elements to the right of $x$ but in the same $1$-block. Changing $\geq$ to $\leq$ in the above definition and using the predecessor relation instead of the successor relation, one can similarly define $\phi^l$. Finally, define

\begin{multline*} \phi^m(x)= \forall x_0 \ldots x_{m+1}\left( \bigwedge_{0\leq i\leq m+1} x_i\sim_1 x \ra \left(\bigvee_{0\leq i<j\leq m+1} x_i=x_j\right)\right)
    \\
    \land \exists x_1 \ldots x_m \bigwedge_{0\leq i\leq m} x_i \sim_1 x \land x_1<x_2\cdots < x_m.
    \end{multline*}
This formula is $d$-$\Sicom{2}$ and says that $x$ lies in a finite $1$-block of size $m$.

Write $L=L_1+\dots +L_n$ in $1$-block form. Let

 \begin{equation*}
    \phi_i(x) =
    \begin{cases*}
      \phi^r(x)                           & if $L_i \cong \omega$ \\
      \phi^l(x)                           & if $L_i \cong \omega^*$ \\
      \phi^r(x) \land \phi^l(x)           & if $L_i \cong \zeta$ \\
      x = x                               & if $L_i$ is finite
    \end{cases*}
  \end{equation*}

 and

  \begin{equation*}
    \psi_i(x) =
    \begin{cases*}
      \neg \phi^l(x)                          & if $L_i \cong \omega$ \\
      \neg \phi^r(x)                          & if $L_i \cong \omega^*$ \\
       x = x                                  & if $L_i \cong \zeta$ \\
      \phi^m(x)                               & if $L_i$ is finite of size $m$.
    \end{cases*}
  \end{equation*}

Note that each $\phi_i(x)$ is $\Picom{3}$ and $\psi_i(x)$ is $\Sicom{3}$.
Let $\phi_{Ax}$ denote the conjunction of the axioms of linear orders. Then the following is a $d$-$\Sigma_3^{\text{c}}$ Scott sentence for $L$:
    \begin{multline*} \exists x_1,\dots, x_n\left(\bigwedge_{1\leq i <j\leq n}\left( x_i\not \sim_1 x_j \land x_i< x_j \right) \land \bigwedge_{1\leq i\leq n} \psi_i(x_i)\right)\\
    \land \forall x_1,\dots,x_n \left(\bigwedge_{1\leq i<j\leq n}\left( x_i\not \sim_1 x_j \land x_i<x_j \right)\ra \bigwedge_{1\leq i\leq n} \phi_i(x_i)\right)\\
    \land \forall x_0,\dots, x_{n+1} \left( \bigvee_{0\leq i<j\leq n+1} x_i \sim_1 x_j\right) \land
    \phi_{Ax} .
    \end{multline*}

\end{proof}
The following is easy to obtain from what we have established so far. We thus state it without proof.
\begin{proposition}\label{prop:simplehr1pi3}
    If $r(L) = 1$ and $L$ is simple or of order type $m+\zeta+n$ with $m,n\geq\omega$, then it has a $\Picom{3}$ Scott sentence.
\end{proposition}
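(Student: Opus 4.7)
\emph{Proof proposal.} The plan is to exhibit, for each $L$ covered by the statement, an explicit $\Pi_3^c$ Scott sentence, built using the formulas $\sim_1$, $\phi^r$, $\phi^l$, and $\phi^m$ from the proof of \cref{thm:hr1dsicom3}. The $d$-$\Sigma_3^c$ Scott sentence constructed there contained an existential $\Sigma_3^c$ component asserting the existence of representatives for each $1$-block of the appropriate type. The central idea is that, for the special orders considered here (simple rank-$1$ orders and orders of the form $m+\zeta+n$), this existential component can be replaced by purely $\Pi_3^c$ universal conditions --- an upper bound on the number of $1$-blocks, min/max existence, ``$L$ is infinite'', and finite-block-size conditions --- from which the existence of all prescribed $1$-blocks is forced automatically.

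The simple rank-$1$ orders fall into the list $\omega$, $\omega^*$, $\omega + \omega^*$, $\omega + k$, $k + \omega^*$ for $k \geq 1$. For $L = \omega$ the Scott sentence is the conjunction of the linear-order axioms, ``$L$ has a minimum'' ($\Sigma_2$), ``$L$ has no maximum'' ($\Pi_2$), and $\forall x\forall y\,(x \sim_1 y)$ ($\Pi_3$); the case $L = \omega^*$ is symmetric. For each of the remaining three simple subcases, the Scott sentence is the conjunction of the linear-order axioms with ``$L$ has both a minimum and a maximum'', the $\Pi_3$ sentence ``at most two $1$-blocks'' (that is, $\forall x_0 x_1 x_2\,\bigvee_{i<j}x_i \sim_1 x_j$), and two block-type conditions of the form $\forall x\,[(\forall z\ x \leq z) \to \phi_{\min}(x)]$ and $\forall x\,[(\forall z\ z \leq x) \to \phi_{\max}(x)]$, where $\phi_{\min}$ and $\phi_{\max}$ are chosen from $\phi^r$, $\phi^l$, or $\phi^k$ according to the types of the min's and max's blocks.

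For $L = m+\zeta+n$ with $m, n \in \omega$, let $\ell$ be the number of $1$-blocks of $L$. The Scott sentence is the conjunction of the linear-order axioms; ``$L$ has a minimum'' (if $m > 0$); ``$L$ has a maximum'' (if $n > 0$); ``at most $\ell$ $1$-blocks'' ($\Pi_3$); ``$L$ is infinite'' ($\Pi_2$); the size conditions ``min's block has size $m$'' (if $m > 0$) and ``max's block has size $n$'' (if $n > 0$), formulated via $\phi^m$ and $\phi^n$; and the $\Pi_3$ condition that every element not in the min's or max's block has both a successor and a predecessor within its own $1$-block. Since any $1$-block is either finite, $\omega$-, $\omega^*$-, or $\zeta$-type, this last condition forces the middle block (when it exists) to be of $\zeta$-type.

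The main obstacle is to verify that the universal conditions actually force the existence --- not merely bound the number --- of the prescribed $1$-blocks. In the $m+\zeta+n$ case, a model with fewer than $\ell$ blocks would have to collapse the middle block, making $L$ a concatenation of the min's and max's blocks of finite sizes $m$ and $n$, hence finite, contradicting the ``$L$ is infinite'' conjunct. Analogously, in the $\omega + k$ subcase, asserting ``has max'' together with ``min's block is $\omega$-type'' (via $\phi^r$) rules out a single-block model, since an $\omega$-type block has no maximum. Once the correct number of blocks is forced, the block-type conditions pin each block down, so $L$ is determined up to isomorphism.
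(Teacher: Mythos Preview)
The paper states this proposition without proof, remarking only that it ``is easy to obtain from what we have established so far.'' Your explicit case analysis is the natural way to fill this in, and your treatment of the genuinely simple rank-$1$ orders ($\omega$, $\omega^*$, $\omega+\omega^*$, $\omega+k$, $k+\omega^*$) is correct.

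There is, however, a real gap in the $m+\zeta+n$ case when $m=0$ or $n=0$. You include the conjunct ``$L$ has a minimum'' only when $m>0$, but you never include ``$L$ has \emph{no} minimum'' when $m=0$ (and symmetrically for the maximum when $n=0$). Without this, your sentence for $\zeta+n$ is also satisfied by $\omega+n$: the latter has a maximum, has exactly two $1$-blocks, is infinite, its max's block has size $n$, and your middle-block condition (``every element not in the min's or max's block has a successor and a predecessor'') is vacuous, since every element of $\omega+n$ lies either in the min's block $\omega$ or in the max's block $n$. The same problem makes your sentence for $\zeta$ hold in $\omega$. Your verification paragraph overlooks this because it tacitly assumes that when $m=0$ there is no min's block to absorb elements---but nothing in your sentence enforces that.

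The fix is immediate: add the $\Pi_2$ conjunct $\forall x\,\exists y\,(y<x)$ when $m=0$, and its dual when $n=0$. With this in place, the first block cannot have a least element, so the successor/predecessor condition genuinely applies to all of its elements and forces it to have type $\zeta$, and your argument goes through.
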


\subsection{Inductive Step}
For the base case, we gave the Scott sentence explicitly in order to conclude that the sentence was also computable. We now use combinatorial notions such as $\alpha$-freeness to establish the general case. 




\begin{theorem}\label{thm:simple1}
Let $M$ be a simple linear order with $r(M) = \alpha$. Suppose that for any scattered linear orders $M',N'$ of Hausdorff rank $\beta<\alpha$, $M' \leq_{2 \beta + 2} N' \Rightarrow M' \cong N'$.
Then $M \leq_{2 \alpha + 1} N \Rightarrow M \cong N$.
\end{theorem}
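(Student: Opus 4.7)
The plan is to use the inductive hypothesis to match up, for each $\gamma<\alpha$, the initial segments of $M$ and $N$ consisting of their first $n$ $\gamma$-blocks, and then to assemble these partial isomorphisms into a global one. By symmetry, assume $M$ is simple of type $\omega$; the $\omega^*$ case is dual, and the $\omega+\omega^*$ case combines the two. By the preceding proposition applied to $M\leq_{2\alpha+1}N$, $N$ is also simple of type $\omega$ with $r(N)=\alpha$. For any $\gamma<\alpha$, both $M/{\sim_\gamma}$ and $N/{\sim_\gamma}$ have order type $\omega$ (since $\gamma$-blocks have rank $\leq\gamma<\alpha$ and cannot absorb a cofinal tail), so the ``first $n$ $\gamma$-blocks'' of $M$ and $N$, call them $P^M_{n,\gamma}$ and $P^N_{n,\gamma}$, are well-defined initial segments of Hausdorff rank at most $\gamma$.

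By the inductive hypothesis, $P^M_{n,\gamma}$ has a $\Pinf{2\gamma+2}$ Scott sentence. Using \cref{prop:blockdefn}, the formula $\chi_{n,\gamma}(x):=$``$x$ lies in one of the first $n$ $\gamma$-blocks'' is $\Pinf{2\gamma+1}$, since it can be rewritten as asserting the nonexistence of $n$ elements strictly less than $x$ that are pairwise $\sim_\gamma$-inequivalent and each $\sim_\gamma$-inequivalent to $x$. Relativizing the Scott sentence of $P^M_{n,\gamma}$ to $\chi_{n,\gamma}$ preserves the complexity class, yielding a $\Pinf{2\gamma+2}$ sentence which lies in $\Pinf{2\alpha+1}$. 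Since $M\leq_{2\alpha+1}N$ and this sentence is true in $M$, it holds in $N$ as well. We conclude $P^M_{n,\gamma}\cong P^N_{n,\gamma}$ for every $n$ and every $\gamma<\alpha$.

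With $M=\bigcup_n P^M_{n,\gamma}$ and $N=\bigcup_n P^N_{n,\gamma}$ for any fixed $\gamma<\alpha$, one assembles the global isomorphism $M\cong N$ by a back-and-forth construction that extends finite partial isomorphisms stage by stage, using at each stage an isomorphism between $P^M_{n_k,\gamma}$ and $P^N_{n_k,\gamma}$ that agrees with the current partial map. The main obstacle is precisely this assembly step: knowing $P^M_{n,\gamma}\cong P^N_{n,\gamma}$ for each $n$ is not by itself enough, because the isomorphisms must be chosen coherently as $n$ grows. Handling this requires exploiting the full $\leq_{2\alpha+1}$ relation between $(M,\bar a)$ and $(N,\bar b)$ at each stage, not just isomorphism of the pieces, so that the back-and-forth extension property (which follows from sharing Scott sentences together with matched tuple-types) genuinely applies to the pair of tuples we have in hand.
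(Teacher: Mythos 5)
There is a genuine gap, and it is concentrated in your opening structural claim. You assert that for \emph{every} $\gamma<\alpha$ the quotients $M/{\sim_\gamma}$ and $N/{\sim_\gamma}$ have order type $\omega$, so that ``the first $n$ $\gamma$-blocks'' are well-defined initial segments exhausting $M$. This is false. Take $M=\zeta^{2}\cdot\omega$, which is simple of type $\omega$ with $r(M)=3$: then $M/{\sim_1}\cong\zeta\cdot\omega$ has no least element, so your formula $\chi_{n,1}$ defines the empty set and $P^M_{n,1}$ does not exist. A $\gamma$-block has rank $\le\gamma$, but nothing forces the $\gamma$-quotient of each summand $M_i$ (of rank $>\gamma$) to have a first class. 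Worse, when $\alpha$ is a limit ordinal there may be \emph{no} fixed $\gamma<\alpha$ for which your scheme works: for $M=\sum_{n\in\omega}\zeta^{n}$ (simple, $r(M)=\omega$) and any fixed $k<\omega$, the quotient $M/{\sim_k}$ consists of finitely many initial classes followed by $\zeta$-ordered stretches coming from the $\zeta^{n}$ with $n>k$, so $\bigcup_n P^M_{n,k}$ is a proper initial segment of finite rank and cannot exhaust $M$. This is exactly why the paper's proof does not fix a single $\gamma$: in the successor case $\alpha=\beta+1$ it uses $\gamma=\beta$, where the $\beta$-quotient genuinely is of type $\omega$ (each summand has finite nonempty $\beta$-quotient by \cref{thm:finmodblock}), and in the limit case it chooses an increasing sequence of \emph{successor} ordinals $\beta_{i(0)}<\beta_{i(1)}<\cdots$ cofinal in $\alpha$, interleaved with the ranks of the summands, and iteratively peels off the first $\beta_{i(k)}$-block at stage $k$, so that the pieces exhaust both orders. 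Your successor case is salvageable (the transfer of a relativized Scott sentence along $M\leq_{2\alpha+1}N$ is a legitimate alternative to the paper's mechanism of applying $M\leq_{2\alpha+1}N$ to a point $b\in N$ and invoking \cref{lem:akpart} to get $(-\infty,b)\leq_{2\beta+2}(-\infty,a)$), but the limit case is a real hole that the fixed-$\gamma$ approach cannot fill.

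Two further remarks. First, the ``main obstacle'' you identify at the end --- choosing the isomorphisms $P^M_{n,\gamma}\cong P^N_{n,\gamma}$ coherently as $n$ grows --- is not actually an obstacle, and your gesture toward a back-and-forth exploiting $\leq_{2\alpha+1}$ is both unworked-out and unnecessary. Since $x\sim_\gamma y$ depends only on the interval $[x,y]$, the relation $\sim_\gamma$ is absolute to initial segments, so any isomorphism $P^M_{n,\gamma}\cong P^N_{n,\gamma}$ carries $\gamma$-blocks to $\gamma$-blocks in order; hence corresponding blocks of $M$ and $N$ are isomorphic, and because the blocks are canonical convex pieces, blockwise isomorphisms simply concatenate to an isomorphism of the $\omega$-sums. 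This is precisely how the paper concludes ($M_i\cong N_i$ for all $i$ immediately yields $M\cong N$), with no coherence argument needed. Second, two smaller inaccuracies: deriving a $\Pinf{2\gamma+2}$ Scott sentence for $P^M_{n,\gamma}$ from the hypothesis ``$\leq_{2\gamma+2}$ implies $\cong$'' requires the standard fact that there is a canonical $\Pinf{2\gamma+2}$ sentence characterizing $\leq_{2\gamma+2}$-above a fixed countable structure (worth stating); and relativizing to the $\Pinf{2\gamma+1}$ guard $\chi_{n,\gamma}$ does not literally preserve the class $\Pinf{2\gamma+2}$, since inner existential quantifiers pick up the guard --- though the result is at worst $\Pinf{2\gamma+3}\subseteq\Pinf{2\alpha+1}$, so the transfer still goes through.
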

\begin{proof}
We will only consider the case where $L$ is simple of type $\omega$; the other cases are similar.

\emph{Successor Case:} Suppose $\alpha = \beta + 1$ and write $M = \Sigma_{i \in \omega} M_i$, $N = \Sigma_{i \in \omega} N_i$ in $\beta$-block form.
Now we show that each $M_i \cong N_i$. If not, then some $M_i \not \cong N_i$. Let $i$ be the first such $i$, and pick $b \in N_{i+1}$. Then there exists $a \in M$ such that $(-\infty,a) \cong (-\infty,b)$ since
    $(-\infty,b) \leq_{2 \beta + 2} (-\infty,a)
   \Rightarrow  (-\infty,b) \cong (-\infty,a)$.

Now consider $(-\infty,a)$ and $(-\infty,b)$ in $\beta$-block form.
Since $(-\infty,a)$ and $(-\infty,b)$ have the same number of blocks by the previous lemma, $(-\infty,a)$ must have $i+1$ blocks. It follows that $M_i$ is the $i$-th block of $(-\infty,a)$, and $N_i$ the $i$-th block of $N$. Since $(-\infty,a) \cong (-\infty,b)$, it is impossible that $M_i \not \cong N_i$, since they are corresponding blocks in the block forms of $(-\infty,a)$, $(-\infty,b)$ respectively.

\emph{Limit case}: Suppose $\alpha$ is a limit ordinal.
Write $M = \Sigma_{i \in \omega} M_i$, $N = \Sigma_{i \in \omega} N_i$, where $r(M_i) = \alpha_i, r(N_i) = \beta_i$, and where cofinally many of the $\beta_i's$ are successor ordinals. We may choose strictly increasing subsequences $(\alpha_{i(k)})_{k \in \omega}$, $(\beta_{i(k)} )_{k \in \omega}$ such that
\[\alpha_{i(k)} \leq \beta_{i(k)} < \alpha_{i(\kappa+1)}\]
for each $k$, where each $\beta_{i(k)}$ is a successor ordinal.
\\
Now $M,N$ each have a first $\beta_{i(0)}$-block: call them $M_0', N_0'$ respectively. Then let $T_0^M$ and $T_0^N$ such that
\[M = M_0' + T_0^M\quad \text{ and } \quad N = N_0' + T_0^N.\]
Assume we have defined $T_k^M$, $T_k^N$ by induction; then they each have a first $\beta_{i(k+1)}-$block. Let those blocks be $M_{k+1}'$, $N_{k+1}'$ and define $T_{k+1}^M$, $T_{k+1}^N$ so that
\[M = \Sigma_{j = 0}^{k+1} M_j' + T_{k+1}^M \quad \text{ and } \quad N = \Sigma_{j = 0}^{k+1} N_j' + T_{k+1}^N.\]
Since $sup(\{ \beta_{i(k)} \}_{k \in \omega}) = \alpha$, we have that
\[ M = \Sigma_{k \in \omega} M_k' \quad \text{ and } \quad
N = \Sigma_{k \in \omega} N_k'.\]
where each $M_k', N_k'$ has Hausdorff rank $\beta_{i(k)}$.

If $M \not \cong N$, there is a first $k$ such that $M_k' \not \cong N_k'$. Let $k$ be such. Writing $M,N$ in $\beta_{i(k)}$-block form, the initial segments $\Sigma_{i \leq k} M_i'$, $\Sigma_{i \leq k} N_i'$ of $M,N$ are the first $\beta_{i(k)}$-blocks of $M,N$ respectively.

Since $M \leq_{2 \alpha + 1} N$, for any $b$ in the second $\beta_{i(k)}$-block of $N$ there is an $a \in M$ such that
\begin{align*}(-\infty,b) &\leq_{2 \alpha} (-\infty,a)\\
\Rightarrow (-\infty,b) &\leq_{2\beta_{i(k)} +2} (-\infty,a)\\
\Rightarrow (-\infty,b) &\cong (-\infty,a).\end{align*}
Writing $(-\infty,b)$, $(-\infty,a)$ in $\beta_{i(k)}$-block form, we find that $M_k'$,$N_k'$ are the first $\beta_{i(k)}$-blocks of each, so they must be isomorphic - a contradiction.
\end{proof}

It is easy to see that every scattered linear order can be partitioned into a sum of simple linear orders of the same Hausdorff rank. For example consider a finite partition $(a_0,\dots, a_{n-1})$ of the order $\zeta+\zeta$. Then there must be some interval $[a_i,a_{i+1}]$ of order type $\omega+\omega^*$. This together with the following  lemma is very useful for the analysis of back and forth relations of linear orders. 
\begin{lemma}[{\cite[Lemma 15.7]{ash2000}}]\label{lem:akpart}
Suppose $\bar{a}, \bar{b}$ are finite partitions of $L, L'$ of into the intervals $L_i, L_i'$ for $0 \leq i \leq n$. Then $(L,\ol a)\leq_{\alpha} (L',\ol b)$ if and only if for all $0\leq i\leq n$, $L_i\leq_{\alpha} L_i'$.
\end{lemma}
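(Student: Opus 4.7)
The plan is induction on $\alpha$. The key enabling observation is that the relation ``$\ol a$ is an increasing tuple partitioning $L$ into intervals $L_0, \ldots, L_n$'' is a quantifier-free condition, and quantifier-free formulas are preserved by $\leq_\beta$ for every $\beta \geq 1$. This lets us move freely between tuples in $L, L'$ and their restrictions to intervals.

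For the base case $\alpha = 1$, I rely on the two equivalent formulations of $\leq_1$. Any $\Sigma_1$ or $\Pi_1$ sentence about an interval $L_i$ can be recast as a $\Sigma_1$ or $\Pi_1$ formula over $L$ by using the parameters $a_{i-1}, a_i$ as bounds on quantifiers. A little care with the dependence of quantifier-free matrices on the full parameter tuple $\ol a$---which boils down to splitting quantified variables across the intervals according to their position relative to $\ol a$---then gives the base case in both directions.

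For the inductive step, assume the lemma for all $\beta < \alpha$. In the backward direction, given $\beta < \alpha$ and $\ol d' \in (L')^{<\omega}$, I split $\ol d'$ across the intervals, invoke each hypothesis $L_i \leq_\alpha L_i'$ at level $\beta$ to obtain matching $\ol c_i \in L_i^{<\omega}$ with $(L_i', \ol d'_i) \leq_\beta (L_i, \ol c_i)$, and concatenate; the inductive hypothesis applied to the refined partitions $\ol a\ol c$ of $L$ and $\ol b\ol d'$ of $L'$ yields $(L', \ol b\ol d') \leq_\beta (L, \ol a\ol c)$. In the forward direction, given $\ol d' \in (L_i')^{<\omega}$, I apply $(L,\ol a) \leq_\alpha (L',\ol b)$ to obtain $\ol c \in L^{<\omega}$ with $(L',\ol b\ol d') \leq_\beta (L,\ol a\ol c)$; preservation of order forces $\ol c \in L_i^{<\omega}$, and the inductive hypothesis extracts the required $(L_i',\ol d') \leq_\beta (L_i,\ol c)$.

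The main hazard is combinatorial bookkeeping: verifying that extracted tuples respect the intended partition on both sides, and that the inductive hypothesis genuinely applies to the refined partitions $\ol a\ol c$ and $\ol b\ol d'$. Minor irritations such as a coordinate of $\ol d'$ coinciding with some $b_j$ are harmless, since duplicates in a partitioning tuple can be removed without affecting any $\leq_\beta$ relation. Once these details are confirmed, the induction proceeds cleanly.
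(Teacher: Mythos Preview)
The paper does not prove this lemma; it simply quotes it from Ash and Knight~\cite[Lemma 15.7]{ash2000} and uses it as a black box. So there is no in-paper argument to compare against.

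Your plan is the standard proof and is correct. The induction on $\alpha$, with the backward direction handled by splitting an arbitrary $\ol d'\in (L')^{<\omega}$ across the intervals $L_i'$ and the forward direction handled by noting that order preservation under $\leq_\beta$ (for $\beta\geq 1$) forces the witnessing $\ol c$ to land in the correct interval, is exactly how the Ash--Knight argument goes. Two small points worth making explicit when you write it out: first, in the inductive step you implicitly use the lemma at level $\beta$ twice (once to pass from $(L_i',\ol d'_i)\leq_\beta (L_i,\ol c_i)$ to the sub-interval relations, and once to reassemble these into $(L',\ol b\ol d')\leq_\beta (L,\ol a\ol c)$), so it is cleanest to state the induction hypothesis as the full biconditional for all $\beta<\alpha$; second, the case $\beta=0$ in the forward direction is harmless because it is subsumed by $\beta=1<\alpha$, but you should say so. With those remarks, the argument is complete.
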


\begin{theorem}\label{thm:maxwithparameters}
Let $M$ be a scattered linear order of Hausdorff rank $\alpha$. Then there is a tuple $\bar{c}$ of elements in $M$ such that for any $N$ and any tuple of elements $\bar{b}$ from $N$, $(M, \bar{c}) \leq_{2 \alpha + 1} (N, \bar{b}) \Rightarrow (M, \bar{c})  \cong (N, \bar{b})$. Moreover, this $\bar{c}$ can be chosen so that the same holds for any refinement of $\bar{c}$; in other words,
\[ \forall \bar{a} \in M  \ \forall N \ \forall \bar{b}, \bar{b}' \in N \ (M, \bar{c} \bar{a}) \leq_{2 \alpha + 1} (N, \bar{b} \bar{b}') \Rightarrow (M, \bar{c}\bar{a})  \cong (N, \bar{b} \bar{b}').\]
As a corollary, $M \leq_{2 \alpha + 2} N \Rightarrow M \cong N$.
\end{theorem}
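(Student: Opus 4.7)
My approach is induction on $\alpha$, proving the main clause $(M,\bar{c}) \leq_{2\alpha+1} (N,\bar{b}) \Rightarrow (M,\bar{c}) \cong (N,\bar{b})$ and deriving the rest from it. The moreover clause reduces to this: since $(M,\bar{c})$ then has a $\Pinf{2\alpha+1}$ Scott sentence, \cref{thm:ssnopar} ensures every orbit in $(M,\bar{c})$ is $\Sinf{2\alpha}$-definable. Given $(M,\bar{c}\bar{a}) \leq_{2\alpha+1} (N,\bar{b}\bar{b}')$, the main clause yields $f\colon (M,\bar{c}) \to (N,\bar{b})$; setting $\bar{a}'' := f^{-1}(\bar{b}')$, the induced relation $(M,\bar{c}\bar{a}) \leq_{2\alpha+1} (M,\bar{c}\bar{a}'')$ gives $\bar{a} \equiv_{2\alpha} \bar{a}''$ in $(M,\bar{c})$ (because $\Pinf{2\alpha} \subseteq \Sinf{2\alpha+1}$ forces the back direction), so $\bar{a}$ and $\bar{a}''$ lie in the same orbit and the desired isomorphism follows. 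For the corollary, rank is preserved by $\leq_{2\alpha+2}$: the statements ``$M$ has at most $n$ $\alpha$-blocks'' and ``$M$ has infinitely many $\beta$-blocks for each $\beta < \alpha$'' can be expressed at complexity $\Pinf{2\alpha+2}$ via \cref{prop:blockdefn}, so $N$ has rank $\alpha$ as well; applying the main clause to $N$ produces a witness $\bar{c}^N$, and one step of back-and-forth in $M \leq_{2\alpha+2} N$ at level $2\alpha+1$ yields $\bar{e} \in M$ with $(N,\bar{c}^N) \leq_{2\alpha+1} (M,\bar{e})$, whence $(N,\bar{c}^N) \cong (M,\bar{e})$ and thus $M \cong N$.

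For the base case $\alpha = 1$, \cref{thm:hr1dsicom3} gives $M$ a $\dSinf{3}$ (hence $\Sinf{4}$) Scott sentence, and \cref{thm:sspar} produces $\bar{c}$ making $(M,\bar{c})$ have a $\Pinf{3}$ Scott sentence. For the inductive step, I write $M$ in $\alpha$-block form $M = B_1 + \dots + B_n$ (finitely many by \cref{thm:finmodblock}). Each $B_i$ has rank at most $\alpha$; when $r(B_i) = \alpha$, since $B_i$ is a single $\alpha$-block of full rank it must be simple (of type $\omega$, $\omega^*$, or $\omega + \omega^*$). For rank $<\alpha$ blocks the inductive hypothesis supplies parameters $\bar{c}_i \subseteq B_i$; simple rank $\alpha$ blocks receive no internal parameters, since \cref{thm:simple1} already handles them. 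I add one marker $d_i \in B_i$ per block and let $\bar{c}$ be the concatenation of the $\bar{c}_i$ and the $d_i$. Given $(M,\bar{c}) \leq_{2\alpha+1} (N,\bar{b})$, the $\Pinf{2\alpha}$ statements $d_i \not\sim_\alpha d_j$ and the $\Pinf{2\alpha+1}$ statement $\forall y\,\bigvee_i (y \sim_\alpha d_i)$ transfer to $N$, so $N = B_1^N + \dots + B_n^N$ with $q_i \in B_i^N$ matched to $d_i$ and $\bar{c}_i^N \subseteq B_i^N$. A block-restricted back-and-forth (see below) then gives $(B_i,\bar{c}_i,d_i) \leq_{2\alpha+1} (B_i^N,\bar{c}_i^N,q_i)$; the inductive hypothesis handles the rank $<\alpha$ case, while for a simple rank $\alpha$ block I observe, using \cref{lem:akpart}, that $(B_i,d_i)$ is a sum of a piece of rank $<\alpha$ (handled by induction) and a piece of simple rank $\alpha$ (handled by \cref{thm:simple1}), so each factor has a $\Pinf{2\alpha+1}$ Scott sentence and hence $(B_i,d_i) \cong (B_i^N,q_i)$. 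Combining across $i$ yields $M \cong N$.

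The hardest step is the block-restricted back-and-forth. Given $\bar{d}' \in B_i^N$, I use the definition of $(M,\bar{c}) \leq_{2\alpha+1} (N,\bar{b})$ at level $\beta = 2\alpha$ to select $\bar{e}'' \in M$ with $(N,\bar{b}\bar{d}') \leq_{2\alpha} (M,\bar{c}\bar{e}'')$; by monotonicity the same $\bar{e}''$ witnesses the relation at every lower $\beta$, so I only need to verify $\bar{e}'' \subseteq B_i$ once. If it were false, some $e \in \bar{e}''$ would satisfy $e \sim_\alpha d_j$ for some $j \neq i$; this $\Sinf{2\alpha}$ fact would transfer from $M$ to $N$ under $(N,\bar{b}\bar{d}') \leq_{2\alpha} (M,\bar{c}\bar{e}'')$, forcing the $\bar{d}'$-matched element to be $\sim_\alpha q_j$ in $N$ and contradicting $\bar{d}' \sim_\alpha q_i$ together with the already-established $q_i \not\sim_\alpha q_j$. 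Iterating this at every alternation of the back-and-forth preserves block containment throughout the construction, producing the restricted back-and-forth relation at level $2\alpha+1$ that closes the inductive step.
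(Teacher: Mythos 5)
Your overall architecture is sound in several places: the base case via \cref{thm:hr1dsicom3} and \cref{thm:sspar}, the corollary obtained by applying the main clause to $N$ (your explicit check that $r(N)=\alpha$ is preserved under $\leq_{2\alpha+2}$ is in fact more careful than the paper's), and the derivation of the ``moreover'' clause from the main clause via $\Sinf{2\alpha}$-definability of orbits in $(M,\bar c)$ through \cref{thm:ssnopar} is a clean alternative to the paper, which instead simply re-applies \cref{lem:akpart} to the refined partition (your step from the main clause to a $\Pinf{2\alpha+1}$ Scott sentence for $(M,\bar c)$ is standard via the canonical back-and-forth formulas, though it deserves a citation). The inductive step, however, has two problems. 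First, the claim that a rank-$\alpha$ $\alpha$-block must be simple of type $\omega$, $\omega^*$, or $\omega+\omega^*$ is false in both directions: no single $\alpha$-block is $\omega+\omega^*$-shaped (its two extreme points would be separated by infinitely many $\beta$-blocks, resp.\ by an interval of rank cofinal in $\alpha$, so they are not $\sim_\alpha$-related), while $\zeta$-shaped blocks --- precisely the sums $\sum_{i\in\zeta}L_i$ from \cref{def:hr} --- are single $\alpha$-blocks that are not simple. For instance, $\zeta\cdot\zeta$ is one $2$-block of rank $2$, but its $1$-block quotient is $\zeta$, whereas an $\omega$-sum followed by an $\omega^*$-sum of rank-$\leq 1$ pieces has a quotient with a least or a greatest class. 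This slip alone would be repairable, since cutting such a block at your marker $d_i$ yields a simple type-$\omega^*$ piece plus a simple type-$\omega$ piece, both covered by \cref{thm:simple1}.

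The genuine gap is the block-restricted back-and-forth. Transferring ``$e\sim_\alpha d_j$'' as a $\Sinf{2\alpha}$ fact (\cref{prop:blockdefn}) is valid only while the relation in play has level at least $2\alpha$ --- that is, only at the first alternation, where you correctly apply it. At the second alternation you are already working with $\leq_\gamma$ for $\gamma\leq 2\alpha-1$, and $\Sinf{2\alpha}$ facts no longer transfer; nor can they be replaced by cheaper ones, since by \cref{thm:completeblock} the relation $\sim_\alpha$ is genuinely of complexity $\Sigma_{2\alpha}$. So from depth two onward responses can escape the blocks, and ``iterating this at every alternation'' is exactly the step that fails. This is the difficulty the paper's proof is engineered to avoid: it chooses $\bar c$ to be a finite partition of $M$ into intervals, each simple or of lower rank (an interval straddling two blocks is simple of the $\omega$-sum-plus-$\omega^*$-sum shape --- this is why that shape appears in the definition of simple), so that \cref{lem:akpart} factorizes $(M,\bar c)\leq_{2\alpha+1}(N,\bar b)$ interval-by-interval in one stroke, with \cref{thm:simple1} and the inductive hypothesis finishing each interval; no restriction to blocks is ever needed, and your markers $d_i$ already sit in the right positions for this. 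Where the paper does prove a restriction result, \cref{lem:ohsoimportantlemma}, it does so by preserving interval ranks --- a challenge inside the block spans an interval of rank $\gamma<\beta$, and preserving that rank only costs level $2\gamma+2$ --- rather than by transferring the $\Sinf{2\alpha}$ definition of $\sim_\alpha$, precisely because of this level-decay problem.
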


\begin{proof}
We proceed by induction on $\alpha$. The base case follows from \cref{thm:hr1dsicom3}. Assume the inductive hypothesis. Then \cref{thm:simple1} holds.  Pick a finite partition $\bar{c}$ of $M$ into a sum of simple linear orders. Then by \cref{lem:akpart}, the result holds. Again by \cref{lem:akpart} and the inductive hypothesis, the result holds for any refinement $\bar{c}\bar{a}$ of the finite partition $\bar{c}$.

It remains to prove the corollary. If $M \leq_{2 \alpha + 2} N$ then for any $\bar{b} \in N$, there is a tuple $\bar{a} \in M$ such that $(N, \bar{b}) \leq_{2 \beta + 1}  (M, \bar{a}).  $ In particular, this will be true for the $\bar{c} \in N$ such that $(N,\bar{c}) \leq_{2 \alpha + 1} (M, \bar{a} ) \Rightarrow (N, \bar{c}) \cong (M, \bar{a})$. So $(N, \bar{c}) \leq_{2 \alpha + 1} (M, \bar{a})$ for some $\bar{a}$, and therefore $M \cong N$.
\end{proof}

If there were only countably many scattered linear orders of Hasudorff rank $\alpha$, then by what we have shown above we would be able to conclude that every rank $\alpha$ linear order has a $d$-$\Sigma_{2 \alpha + 1}$ Scott sentence. However, this is not the case. For example, there are uncountably many linear orders just of Hausdorff rank 2. With a little more work, this is not a stumbling block.

\begin{theorem}\label{thm:simpleSS}
Let $M$ be a simple linear order with $r(M) = \alpha$. Then $M$ has a $\Pi_{2 \alpha + 1}$-Scott sentence.
\end{theorem}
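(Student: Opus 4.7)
I would proceed by transfinite induction on $\alpha$, with the base case $\alpha = 1$ handled by \cref{prop:simplehr1pi3}. For the inductive step, I would first observe that the inductive form of \cref{thm:maxwithparameters} (already established earlier in the paper) implies that every scattered linear order $M'$ of Hausdorff rank $\beta < \alpha$ satisfies $M' \leq_{2\beta+2} N' \Rightarrow M' \cong N'$. This is precisely the hypothesis of \cref{thm:simple1}, so for the simple $M$ of rank $\alpha$ under consideration we obtain $M \leq_{2\alpha+1} N \Rightarrow M \cong N$ for every countable linear order $N$.

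The remaining task is to convert this back-and-forth characterization into the existence of a $\Pi_{2\alpha+1}$ Scott sentence. I would appeal to \cref{thm:ssnopar}(4): it suffices to show that no tuple in $M$ is $(2\alpha)$-free. Suppose toward a contradiction that some $\bar a \in M^{<\omega}$ is $(2\alpha)$-free. Then, for each $\beta < 2\alpha$ and each $\bar b \in M^{<\omega}$, there exist witnesses $\bar a', \bar b' \in M^{<\omega}$ with $\bar a \bar b \leq_\beta \bar a' \bar b'$ yet $\bar a \not\leq_{2\alpha} \bar a'$. Using these witnesses in an Ash--Knight style iterative construction (cf. \cite[Chapter 15]{ash2000}), one can assemble a countable linear order $N$ for which $M \leq_{2\alpha+1} N$ but $N \not\cong M$, contradicting what was just shown via \cref{thm:simple1}.

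The main obstacle is this final conversion step. In the proof of \cref{thm:maxwithparameters} parameters were essential, because general scattered orders lack the requisite canonical structure; here the simple block decomposition of $M$ supplies enough rigidity to dispense with parameters, which is exactly what is needed to pass from a $d$-$\Sigma_{2\alpha+1}$ Scott sentence to a $\Pi_{2\alpha+1}$ one. Care is needed to ensure that the diagonalized $N$ produced from the witnesses is itself scattered and behaves correctly with respect to $\leq_{2\alpha+1}$; the Ash--Knight partition lemma (\cref{lem:akpart}) together with the inductive hypothesis on rank-$\beta$ orders for $\beta < \alpha$ are what make the local pieces of the construction controllable and allow the simple block structure of $M$ to be faithfully reflected in $N$.
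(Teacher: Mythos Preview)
Your high-level plan---derive $M\leq_{2\alpha+1}N\Rightarrow M\cong N$ from \cref{thm:simple1} and then convert this to a $\Pi_{2\alpha+1}$ Scott sentence---is a legitimate alternative to the paper's argument, and in principle shorter. The paper does not route through \cref{thm:simple1} here; instead it shows directly that no tuple $\bar a$ is $(2\alpha)$-free by choosing an initial segment $M'\supseteq\bar a$ of rank $\gamma<\alpha$ with last element $c$, applying \cref{thm:maxwithparameters} to $M'$ to obtain a witness tuple $\bar a'$ at level $2\gamma+1<2\alpha$, and then using \cref{lem:akpart} to verify that whenever $\bar a c\bar a'\leq_{2\gamma+1}\bar b d\bar b'$ in $M$ the tuple $\bar b$ lies in the automorphism orbit of $\bar a$. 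Your approach trades this explicit witness for the work already packaged in \cref{thm:simple1}.

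The gap is your conversion step. You propose to reach a contradiction from a hypothetical $(2\alpha)$-free tuple by assembling a bad $N$ via an unspecified ``Ash--Knight style iterative construction.'' That construction is never described; the results in \cite[Chapter~15]{ash2000} you gesture at carry hypotheses (such as $\alpha$-friendliness of a fixed family of structures) that you have not set up, and it is not explained why the freeness witnesses produce $M\leq_{2\alpha+1}N$ rather than the reverse inequality. In fact the whole detour through freeness and \cref{thm:ssnopar} is unnecessary. For any countable $M$ there is a single canonical $\Pi_{2\alpha+1}$ sentence $\phi$ with $N\models\phi$ if and only if $M\leq_{2\alpha+1}N$; under the conclusion of \cref{thm:simple1} this $\phi$ is already a Scott sentence for $M$. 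So once you have invoked \cref{thm:simple1}, the proof closes in one line, with no construction and no appeal to freeness at all.
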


\begin{proof}
It is enough to show that no tuple of $M$ is $2 \alpha$-free.
Note that no tuple in $M$ is $2\alpha$-free if and only if
$\forall \bar{a} \in M, \exists \beta < 2\alpha$ and $\bar{a}' \in M $ such that $ \forall \bar{b}, \bar{b}'$
$ \bar{a} \bar{a}' \leq_{\beta} \bar{b} \bar{b}' \Rightarrow  \bar{a}\leq_{2 \alpha} \bar{b}.$

We will consider only the case where $M = \Sigma_{i \in \omega} M_i$ is simple of type $\omega$; the other cases proceed similarly. 
Fix an arbitrary tuple $\bar{a} \in M$. Let $M_k$ be the largest $k$ such that some element of $\bar{a}$ is in $M_k$. Consider $M'$ an initial segment of $M$ containing $M_k$. Let $r(M') = \gamma$ and without loss of generality assume $M'$ has a last element, say $c$ . By \cref{thm:maxwithparameters} there is a tuple $\bar{a}'$ in $M'$ such that for all $N$ and $\bar{b},d, \bar{b}'\in N$, if $(M', \bar{a}c\bar{a}') \leq_{2 \gamma + 1} (N, \bar{b}d \bar{b}')$ then $(M', \bar{a}c\bar{a}') \cong (N, \bar{b}d \bar{b}')$.

Choose this $\bar{a}'$ and suppose that $\bar{b},c, \bar{b}'$ are any tuples from $M$ such that $\bar{a}c \bar{a}' \leq_{2 \gamma + 1} \bar{b}d \bar{b}'$. Since $(M, \bar{a}c \bar{a}') \leq_{2 \gamma + 1} (M, \bar{b}d \bar{b}')$ it follows from \cref{lem:akpart} that $(M', \bar{a}c \bar{a}') \leq_{2 \gamma + 1} (N', \bar{b} d\bar{b}')$ for some $N'$ an initial segment of $M$. Therefore, $M'\cong N'$, $\sum_{i=0}^k M_i\subseteq M'$ and $\sum_{i=0}^k M_i \subseteq N'$. This implies that the isomorphism between $M'$ and $N'$ is an automorphism on $\sum_{i=0}^k M_i$ and therefore $\ol a \in aut_{\sum_{i=0}^k M_i}(\bar b)$. This clearly implies that $\bar a\in aut_M(\bar b)$.
\end{proof}

\begin{corollary}\label{cor:sigma2alpha2ss}
If $M$ is a scattered linear order with $r(M) = \alpha$, then $M$ has a $\Sigma_{2 \alpha + 2}$ Scott sentence.
\end{corollary}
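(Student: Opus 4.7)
The plan is to apply \cref{thm:sspar}: if I can find a tuple $\bar c \in M$ for which the expansion $(M, \bar c)$ has a $\Pinf{2\alpha+1}$ Scott sentence, then $M$ itself has a $\Sinf{2\alpha+2}$ Scott sentence, which is exactly what the corollary asks for.

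First I would take $\bar c = (c_0, \dots, c_{n-1})$ to be a finite partition of $M$ into simple intervals $M_0, \dots, M_n$ of Hausdorff rank at most $\alpha$; such a partition exists by the observation made just before \cref{lem:akpart}. By \cref{thm:simpleSS}, each $M_i$ has a $\Pinf{2r(M_i)+1}$ Scott sentence $\varphi_i$, and since $r(M_i)\leq \alpha$ each $\varphi_i$ is a fortiori $\Pinf{2\alpha+1}$.

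Next, I would write down a candidate Scott sentence for $(M, \bar c)$: the conjunction of the linear-order axioms, the finitary assertion $c_0<c_1<\cdots<c_{n-1}$, and the relativizations $\varphi_i^{I_i}$ of each $\varphi_i$ to the interval $I_i$ cut out by $\bar c$. Since interval membership is quantifier-free in the parameters $\bar c$, the relativization does not change the quantifier complexity of $\varphi_i$; hence the total sentence is a finite conjunction of $\Pinf{2\alpha+1}$ formulas and is itself $\Pinf{2\alpha+1}$. To see that this really is a Scott sentence for $(M, \bar c)$, I would use \cref{lem:akpart}: any countable model $(N, \bar d)$ of the sentence decomposes as a sum of intervals $N_0, \dots, N_n$ with $N_i \models \varphi_i$, so each $N_i \cong M_i$, and gluing the isomorphisms yields $(N, \bar d) \cong (M, \bar c)$. \cref{thm:sspar} then produces the required $\Sinf{2\alpha+2}$ Scott sentence for $M$.

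The main obstacle is purely bookkeeping: verifying that every piece of the simple partition has rank $\leq \alpha$, so its Scott sentence from \cref{thm:simpleSS} fits inside $\Pinf{2\alpha+1}$, and checking that relativization to an interval definable from parameters preserves this class. The substantive combinatorial content lies already in \cref{thm:simpleSS,thm:maxwithparameters}; this corollary only packages it via \cref{thm:sspar}.
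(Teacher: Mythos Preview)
Your argument is correct and follows essentially the same route as the paper: choose a finite partition $\bar c$ of $M$ into simple pieces, show that $(M,\bar c)$ has a $\Pinf{2\alpha+1}$ Scott sentence, and then invoke \cref{thm:sspar}. The only cosmetic difference is that the paper obtains the $\Pinf{2\alpha+1}$ Scott sentence for $(M,\bar c)$ by saying that, via \cref{lem:akpart}, the $2\alpha$-freeness argument of \cref{thm:simpleSS} goes through interval-by-interval over $\bar c$, whereas you build that sentence explicitly by relativizing the $\Pinf{2\alpha+1}$ Scott sentences of the simple pieces and conjoining them; both yield the same conclusion.
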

\begin{proof}
 Let $\bar{c}$ be a finite partition of $M$ into intervals each of which is simple. Then by \cref{lem:akpart}, the same proof as in \cref{thm:simpleSS} will also hold over $\bar{c}$.
\end{proof}

\begin{lemma}\label{lem:ohsoimportantlemma}
Suppose that $L$ is a linear order of Hausdorff rank strictly greater than $\beta$, and that $L'$ is a $\beta$-block of $L$. For increasing $\bar{a}, \bar{b} \in L'$,
\[ (L, \bar{a}) \leq_{2 \beta + 1} (L, \bar{b}) \Rightarrow (L', \bar{a}) \leq_{2 \beta + 1} (L', \bar{b}).\]
\end{lemma}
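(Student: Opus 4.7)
I plan to prove this by induction on $\gamma \leq 2\beta + 1$, strengthening the claim to: for any increasing tuples $\bar a, \bar b \in L'$, $(L, \bar a) \leq_\gamma (L, \bar b)$ implies $(L', \bar a) \leq_\gamma (L', \bar b)$. The base case $\gamma = 0$ is immediate because atomic types are preserved under substructures. For the inductive step, I would use the back-and-forth characterization of $\leq_\gamma$: given $\gamma' < \gamma$ and a tuple $\bar d' \in L'$ extending $\bar b$, the hypothesis on $L$ yields some $\bar c \in L$ with $(L, \bar b \bar d') \leq_{\gamma'} (L, \bar a \bar c)$, and the task becomes arranging $\bar c \in L'$, after which the induction hypothesis at level $\gamma'$ produces $(L', \bar b \bar d') \leq_{\gamma'} (L', \bar a \bar c)$.

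To force $\bar c \in L'$, I use that $\sim_\beta$-classes are convex in $L$, which follows by a short induction from \cref{def:block}. Hence any $c_i$ strictly between two elements of $\bar a$ automatically lies in $L'$. For $c_i$ outside the convex hull of $\bar a$, say $c_i < a_1$, I examine the corresponding $d_i \in \bar d'$. If $d_i$ is interleaved with the $b_j$'s, convexity forces $c_i$ to be in $L'$; if instead $d_i < b_1$, then $d_i$ witnesses the $\Sigma_{2\beta+1}$ statement ``$\exists x < b_1$ with $x \sim_\beta b_1$'' (using \cref{prop:blockdefn}). Since $(L, \bar a) \leq_{2\beta + 1} (L, \bar b)$ transfers such $\Sigma_{2\beta+1}$-facts from $\bar b$ to $\bar a$, the set $L' \cap (-\infty, a_1)$ is nonempty and we can pick a replacement $c_i' \in L' \cap (-\infty, a_1)$. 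The right-hand case is symmetric.

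The key technical point is that this replacement preserves $(L, \bar b \bar d') \leq_{\gamma'} (L, \bar a \bar c')$, which I would verify via the partition lemma (\cref{lem:akpart}). Writing $L = L_{\mathrm{left}} + L' + L_{\mathrm{right}}$ according to the position of $L'$, the partitions induced by $\bar a \bar c'$ and $\bar b \bar d'$ yield interior intervals lying entirely inside $L'$, together with two outermost intervals which decompose as $L_{\mathrm{left}} + (L' \cap (-\infty, a_1))$ and $L_{\mathrm{left}} + (L' \cap (-\infty, b_1))$ respectively (and symmetrically on the right). Because the $L_{\mathrm{left}}$ summand is literally the same on both sides, only the $L'$-parts matter, and on those parts the back-and-forth follows from the corresponding back-and-forth in $L'$, to which the induction hypothesis applies.

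The main obstacle I expect is the verification that the modification is consistent with $\leq_{\gamma'}$. This requires using the $\Sigma_{2\beta}$-definability of $\sim_\beta$ together with the partition lemma to separate the $L \setminus L'$ part from the $L'$ part of the outermost intervals, and then showing that the interval-wise back-and-forth relations on $L_{\mathrm{left}} + X$ versus $L_{\mathrm{left}} + Y$ are governed by the back-and-forth between $X$ and $Y$ alone. This is essentially the content that makes the outer boundary of $L'$ invisible at level $\gamma' \leq 2\beta$, so the replacement of elements outside $L'$ by elements inside $L'$ is benign.
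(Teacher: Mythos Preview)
Your replacement step is where the argument breaks. After the back-and-forth hands you $\bar c\in L$ with $(L,\bar b\bar d')\leq_{\gamma'}(L,\bar a\bar c)$, you swap each $c_i\notin L'$ for an \emph{arbitrary} $c_i'\in L'$ on the correct side of $\bar a$ and assert that $(L,\bar b\bar d')\leq_{\gamma'}(L,\bar a\bar c')$ still holds. But nothing ties $\bar c'$ to $\bar d'$. If, say, $c_1$ is leftmost and lies to the left of $L'$, the partition lemma for the original relation controls the interval $(-\infty,c_1)$, which is a proper initial segment of $L_{\mathrm{left}}$; after replacement the leftmost interval on the $\bar a$-side is $L_{\mathrm{left}}+\bigl(L'\cap(-\infty,c_1')\bigr)$ for a freely chosen $c_1'$, about which you have no back-and-forth information at all. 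Invoking the inductive hypothesis at level $\gamma'$ here is circular: it only passes from an $L$-relation to an $L'$-relation once both tuples already sit in $L'$, so it cannot manufacture the missing relation for $\bar c'$. Your closing remark that the boundary of $L'$ is ``invisible at level $\gamma'\leq 2\beta$'' is precisely the nontrivial content of the lemma and cannot be assumed.

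The paper sidesteps this by inducting on $\beta$ rather than on the back-and-forth level. It unfolds $\leq_{2\beta+1}$ only once, to level $2\beta$, and at that level it shows the witness is \emph{automatically} in $L'$: the interval $(c_0,b_0)$ lies inside $L'$ and so has some rank $\gamma<\beta$, whence $2\gamma+2\leq 2\beta$, and the relation $(c_0,b_0)\leq_{2\beta}(d_0,a_0)$ then forces $r\bigl((d_0,a_0)\bigr)=\gamma$ as well, giving $d_0\sim_\beta a_0$ and $\bar d\subset L'$. No replacement is ever needed. Your scheme would require a comparable mechanism at every level $\gamma'<2\beta+1$, and when $\gamma'$ is small there is none.
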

\begin{proof}
For the base case, assume $r(L) \geq 1$ and $\ol a, \ol b \in L'$ are in the same $0$-block. Then $|\ol a| = |\ol b|=1 $ and $\ol a=\ol b$.

Now assume the lemma is true for all $\gamma<\beta$ and that $(L,\ol a)\leq_{2\beta+1} (L,\ol b)$. Without loss of generality, suppose $|\ol a|=|\ol b|=n-1$. Then $\ol{a}, \ol{b}$ are distinct finite partitions of $L$, say into the intervals $L_{a_i}$ and $L_{b_i}$ respectively. Define $L'_{a_i}$, $L'_{b_i}$ likewise. It is sufficient to show that $L'_{a_i}\leq_{2\beta+1} L'_{b_i}$.
For $0<i<n$ this follows trivially as for these $i$, $L'_{a_i}=L_{a_i}$ and $L'_{b_i}=L_{b_i}$. We need to show that $L'_{a_0}\leq_{2\beta+1} L'_{b_0}$ and that $L'_{a_n} \leq_{2\beta+1}L'_{b_n}$.

We will show $L'_{a_0}\leq_{2\beta+1} L'_{b_0}$; the case that $L'_{a_n}\leq_{2\beta+1} L'_{b_n}$ is similar. To this end, pick ordered $\ol c \in L'_{b_0}$. Then $(c_0,b_0)$ has Hausdorff rank $\gamma < \beta$. Since $(L,a_0) \leq_{2 \beta + 1} (L,b_0)$, there is $\ol d \in L$ such that $(L,\ol cb_0) \leq_{2 \beta} (L, \ol da_0)$. Now as $(c_0,b_0)$ is of Hausdorff rank $\gamma$ and $2\gamma+1< 2\gamma+2 \leq 2\beta$ we have that $r((d_0,a_0))=r((c_0,b_0))=\gamma$ and thus $\ol d\in L'_{a_0}$. Then by the inductive hypothesis we get that for all $\ol c \in L_{b_0}'$ there is $\ol d \in L_{a_0}'$ such that $(L'_{b_0},\ol c)\leq_{2\beta}( L'_{a_0},\ol d)$ and hence $L'_{a_0}\leq_{2\beta+1} L'_{b_0}$.
\end{proof}
To complete the proof of \cref{thm:mainthm}, by \cref{thm:rachael} it remains to show that all Hausdorff rank $\alpha$ linear orders have a $\Pi_{2\alpha+2}$ Scott sentence. 
\begin{theorem}\label{thm:pi2alpha2ss}
Every scattered linear order $L$ with $r(L)$ = $\alpha$ has a $\Pi_{2 \alpha + 2}$ Scott sentence.
\end{theorem}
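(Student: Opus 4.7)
The plan is to prove \cref{thm:pi2alpha2ss} by induction on the Hausdorff rank $\alpha$, applying Montalb\'an's Theorem \ref{thm:ssnopar} to reduce the existence of a $\Pinf{2\alpha+2}$ Scott sentence for $L$ to the statement that no tuple in $L$ is $(2\alpha+1)$-free. The base case $\alpha=1$ follows from \cref{thm:hr1dsicom3}, since a $d$-$\Sicom{3}$ sentence is a fortiori $\Pinf{4}$.

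For the inductive step, I fix $\bar a\in L^{<\omega}$ and construct a tuple $\bar b\in L^{<\omega}$ together with an ordinal $\beta<2\alpha+1$ such that, for every $\bar a',\bar b'\in L$, $\bar a\bar b\leq_\beta \bar a'\bar b'$ implies $\bar a\leq_{2\alpha+1}\bar a'$. Following the strategy of \cref{thm:simpleSS}, I first extend $\bar a$ by a finite tuple $\bar c_0$ so that $\bar a\bar c_0$ partitions $L$ into intervals, each of which is either simple of Hausdorff rank $\alpha$ or has Hausdorff rank strictly less than $\alpha$; this is possible because $L/{\sim_{\alpha}}$ is finite by \cref{thm:finmodblock}. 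Within each sub-interval $I$ of this refined partition, I use \cref{thm:maxwithparameters} to pick auxiliary parameters $\bar c_I$ rendering $(I,\bar c_I)$ rigid, in the sense that $\leq_{2r(I)+1}$-back-and-forth from $(I, \bar c_I)$ to any extension forces an isomorphism.

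Let $\bar b$ be the concatenation of all these auxiliary parameters, and take $\beta=2\alpha$. Suppose $\bar a\bar b\leq_{2\alpha} \bar a'\bar b'$ in $L$. By \cref{lem:akpart} applied to the refined partition induced by $\bar a\bar b$, we obtain $(I,\bar c_I)\leq_{2\alpha}(I',\bar c_I')$ for each sub-interval $I$ and its counterpart $I'$. For sub-intervals $I$ of rank $\gamma<\alpha$, the inequality $2\alpha\geq 2\gamma+1$ combined with the rigidity from \cref{thm:maxwithparameters} yields $I\cong I'$; reassembling these isomorphisms across the coarser $\bar a$-partition via \cref{lem:akpart} then delivers $\bar a\leq_{2\alpha+1}\bar a'$.

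The main obstacle is handling simple rank-$\alpha$ sub-intervals $I$: here \cref{thm:maxwithparameters} requires $\leq_{2\alpha+1}$-back-and-forth to conclude an isomorphism, whereas we have only $\leq_{2\alpha}$, a one-level gap. To close it, I plan to invoke \cref{lem:ohsoimportantlemma}, which allows $\leq_{2\beta+1}$-back-and-forth inside $L$ (when $r(L)>\beta$) to descend to $\leq_{2\beta+1}$-back-and-forth inside any $\beta$-block of $L$. Decomposing such an $I$ along its constituent $\beta$-blocks of $L$ for $\beta<\alpha$, invoking the inductive hypothesis on these sub-blocks (which have rank strictly less than $\alpha$), and combining with the structural characterization of simple orders from \cref{thm:simple1}, one can bootstrap the available $\leq_{2\alpha}$-information up to an isomorphism $I\cong I'$. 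This bootstrap is the technical crux of the proof.
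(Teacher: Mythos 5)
Your overall frame (reduce to ``no tuple of $L$ is $(2\alpha+1)$-free'' via \cref{thm:ssnopar}, partition $L$ into simple or lower-rank intervals, and apply \cref{lem:akpart}) matches the paper's setup, but the step you yourself flag as ``the technical crux'' is a genuine gap, and it cannot be closed with the tools you cite. For a simple rank-$\alpha$ interval $I$ you need to force $I \cong I'$ from $\leq_{2\alpha}$-data, while every rigidity result available is off by exactly one level: \cref{thm:maxwithparameters} and \cref{thm:simple1} both require $\leq_{2\alpha+1}$, and this is not an artifact of their proofs. No finite parameter tuple $\ol c_I$ can repair this: in a type-$\omega$ simple interval the final segment beyond $\max \ol c_I$ is again simple of rank $\alpha$, and $\leq_{2\alpha}$ does not even determine Hausdorff rank in general (by Ash's computations $\omega^{\alpha} \equiv_{2\alpha} \omega^{\alpha+1}$; the paper's preservation proposition for rank and simplicity needs $\leq_{2\alpha+1}$). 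Likewise \cref{lem:ohsoimportantlemma} only lets you descend $\leq_{2\beta+1}$-information into a single $\beta$-block for $\beta<\alpha$; blockwise isomorphisms of the constituent blocks (which your inductive hypothesis does supply) do not assemble into an isomorphism of the $\omega$-sums, because nothing in your data aligns corresponding blocks --- that alignment is precisely what the extra quantifier level buys in the proof of \cref{thm:simple1}, where one uses $\leq_{2\alpha+1}$ to existentially pick $a$ with $(-\infty,b)\leq_{2\beta_{i(k)}+2}(-\infty,a)$ and then cites initial-segment rigidity.

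What rescues the theorem is that $I$ and $I'$ live inside the \emph{same} structure $L$, and the paper exploits this with machinery absent from your sketch. It first chooses $\ol b$ marking the finitely many $\alpha$-blocks so that, by the $\Sigma_{2\alpha}$-definability of $\sim_\alpha$ (\cref{prop:blockdefn}), any $\leq_{2\alpha}$-matched $\ol a'$ lies in the same $\alpha$-block as $\ol a$. Then, instead of rigidifying intervals, it invokes the inductive hypothesis in a different form: the relativized $\dSinf{2\gamma+1}$ Scott sentences of \emph{bounded} sub-intervals of rank $\gamma<\alpha$ are $\Sigma_{2\alpha}$, hence transfer along $\leq_{2\alpha}$, yielding \cref{claim:intervalsiso} (every interval around $\ol a'$ reflects to an isomorphic interval around $\ol a$). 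Finally it hand-builds a back-and-forth family of partial isomorphisms, crucially using that in an $\alpha$-block without endpoints any two points are only finitely many $\beta$-blocks apart, and concludes the stronger fact that $\ol a'$ and $\ol a$ are automorphic, so $\ol a \leq_{2\alpha+1} \ol a'$. Blocks with a first or last element require a separate variant exploiting the well-ordered $\beta$-block form, a case split your uniform partition scheme also omits. In short, your proposal is a correct skeleton around an unproven core, and that core is the actual content of the theorem.
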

\begin{proof}

We proceed by induction on $\alpha$ to show that no tuple in $L$ is $(2\alpha+1)$-free. The base case has already been shown, so suppose $\alpha > 1$. Since  $r(L) = \alpha$ we may write $L$ as $L_0 + \cdots + L_n$ where each $L_i$ is a distinct $\alpha$-block. We will first show that no $\ol a \in L_i^{<\omega}$ for any $i \leq n$ is $(2\alpha+1)$-free and then reason that no tuple can be $(2\alpha+1)$-free.

Without loss of generality let $\ol a\in L_i^{<\omega}$ be increasing, $\ol a = (a_0, \ldots, a_m)$, and assume towards a contradiction that it is $(2\alpha+1)$-free in $L$. 
Then
\[ \forall (\beta < 2\alpha+1) \forall \ol b \exists \ol a',\ol b'\ \left(\ol a\ol b \leq_\beta \ol a'\ol b' \land \ol a\not \leq_{2\alpha+1} \ol a'\right).\]
We distinguish cases depending on whether $L_i$ has a first or last element. We first deal with the case when it has neither. 

Choose $\ol b$ so that each component is from a distinct $L_j$ with $j\neq i$. Then by $(2\alpha+1)$-freeness there are $\ol a',\ol b'$ such that $\ol a\ol b \leq_{2\alpha} \ol a'\ol b'$ but $\ol a\not \leq_{2\alpha+1} \ol a'$. Recall that the relation $\sim_\alpha$ is $\Sigma_{2\alpha}$-definable. Since for all $j,k < n$ with $j\neq k$, $b_j\not \sim_\alpha b_k$, it follows that the same is true for $\ol b'$. Moreover, for all $j \leq m$ and all $k< n$, $a_j\not \sim_\alpha b_k$ so $a_j'\not \sim_\alpha b_k'$. Since there are only $n+1$ $\alpha$-blocks in L, $\ol a'\in L_i$. 
Pick an arbitrary $c > a_{m}'$ in $L_i$. Then $(a_{m}',c)$ has Hausdorff rank $\gamma<\alpha$ and thus a $\dSinf{2\gamma+1}$ Scott sentence by the inductive hypothesis. Let $\phi(x,y)$ be this Scott sentence relativized to the interval $(x,y)$. Then
\[ (L,\ol a'\ol b')\models \exists y\ \phi(a_{m}',y)\] 

Similarly, pick an arbitrary $c' < a_0'$ in $L_i$ and let $\phi'(x,y)$ be the relativized Scott sentence of $(c', a_0')$. Then
\[ (L,\ol a'\ol b')\models \exists x\ \phi(x, a_0) \land \exists y\ \phi'(a_m,y).\]
Denote the above formula by $\psi(\ol x)$. It is at most $\Sigma_{2\alpha}$, so
\[ (L,\ol a\ol b)\models \psi(\ol a).\]
Furthermore, $\ol a\leq_{2\alpha} \ol a'$ implies that $(a_0,a_k)\cong (a'_0,a'_k)$. We just proved the following.
\begin{claim}\label{claim:intervalsiso}
For every interval $(c',d')\subset L_i$ such that $\ol a'\in (c',d')$ there is an interval $(c,d)\subset L_i$ and an isomorphism $f:(c',d')\cong (c,d)$ such that $f(\ol a')=\ol a$.
\end{claim}
Now, for every interval $(c,d)\subset L_i$ including $\ol a$ consider the set
\[ I_{(c,d)}=\{ (\ol a\ol e,\ol a'f(\ol e)): \ol e\in (c,d), f: (c,d) \cong (c',d'), f(\ol a)=\ol a'\}\]
and let
\[ I=\bigcup_{(c,d)\subset L_i, \ol a\in (c,d)} I_{(c,d)}.\]
We need to show that $I$ has the back and forth property. By \cref{claim:intervalsiso} it is non-empty and all the pairs are partial isomorphisms. We now show that for every $(\ol a\ol e,\ol a'\ol e')\in I$ and $b\in L_i$ there is $b'\in L_i$ such that $(\ol a\ol e b,\ol a'\ol e' b')\in I$. Let $c,d$ such that $(\ol a\ol e, \ol a'\ol e')\in I_{(c,d)}$. If $b\in (c,d)$, then this clearly holds. Assume $b>d$, then $b$ and the greatest element of $\ol a\ol e$ are at most finitely many $\beta$ blocks apart for some $\beta<\alpha$, say they are $k$ $\beta$-blocks apart. Pick an element $y'$ $k+1$ $\beta$-blocks from the biggest element in $\ol a'\ol e'$. Then, by \cref{claim:intervalsiso}, there is an element $y$ such that $(c,y)\cong(c',y')$ and the interval $(c,y)$ contains $(c,d)$ as an initial segment.
Let $g:(c,d)\cong (c',d')$ such that $\ol a\ol e\mapsto \ol a'\ol e'$. Then $g$ is an isomorphism between initial segments of $(c,y)$ and $(c',y')$ and it can be extended to an isomorphism $g': (c,y) \cong (c',y')$. To see this, just notice that $[d,y)\cong [d',y')$, say by $h$. Then for $x\in (c,y)$, let $g'(x)=g(x)$, and for $x\in [d,y)$ let $g'(x)=h(x)$. Thus $(\ol a\ol e b,\ol a'\ol e' g'(b))\in I_{(c,y)}$ and therefore also in $I$. The case when $b<c$ is symmetric. It remains to show that for every $(\ol a\ol e,\ol a'\ol e')\in I$ and $b'\in L_i$ there is $b\in L_i$ such that $(\ol a\ol e b,\ol a'\ol e' b')\in I$.  Let $c,d$ such that $(\ol a\ol e,\ol a'\ol e')\in I_{(c,d)}$ and $c',d'$ be the elements such that $(c,d)\cong (c',d')$. We assume that $b'>d'$, the other cases again being similar or trivial. Take any element $y'\in L_i$ such that $y'>b'$. Then $b'\in (c',y')$. By \cref{claim:intervalsiso} there is $y$ such that $(c',y')\cong (c,y)$ and hence some $b\in(c,y)$ such that $(\ol a\ol eb,\ol a\ol eb')\in I_{(c,y)}$. It follows that $I$ has the back and forth property. We have that $(\ol a,\ol a')\in I$ and thus $\ol a'\in aut_{L}(\ol a)$ and $\ol a \leq_{2\alpha+1} \ol a'$, a contradiction to our assumption that $\ol a$ is $(2\alpha+1)$-free.

If $L_i$ has a first element, then we proceed similarly as above but this time we choose $\ol b$ of length $n+1$ with $b_j\in L_j$ for $j\leq n$ and such that $b_i$ is the first element of $L_i$. Notice that if $L_i$ has a first element then there is a $\beta<\alpha$ such that the $\beta$-block form of $L_i$ is well-ordered. Then, since $\ol a\ol b\leq_{2\alpha} \ol a'\ol b'$, we have that $[b_i,a_m)\cong [b_i',a_m')$. We now proceed as above but with the difference that we take \[I=\bigcup_{[a_m,d)\subseteq L_i}\{ (a_m \ol e, f(a_m)f(\ol e)): \ol e\in [a_m,d), f:[a_m,d)\cong [a_m',d'), f(a_m)=a_m'\}.\]
The case when $L_i$ has a last element is symmetric and if $L_i$ has both a last and a first element then it has Hausdorff rank $\beta<\alpha$ and thus $\ol a$ can not be $(2\alpha+1)$-free by hypothesis.

We still need to show that no tuple containing elements from different $\alpha$-blocks can be $(2\alpha+1)$-free. The following claim settles this.
\begin{claim}
 Let $\ol a\in L^{<\omega}$ where without loss of generality $\ol a=\ol a_0\dots \ol a_n$ with $\ol a_i\in L_i$ for $i<n$. Then $\ol a$ is not $(2\alpha+1)$-free in $L$ if and only if for all $i<n$ $\ol a_i$ is not $(2\alpha+1)$-free in $L$.
\end{claim}
\begin{proof}
We proof the case where $\ol a$ consists of two elements $a_0<a_1$ in different blocks. The general case follows easily.

Assume $a_0a_1$ is $(2\alpha+1)$-free, then we have that there is $a_0'a_1'$ in the same blocks as $a_0$ and $a_1$ respectively with
\[ a_0a_1\leq_{2\alpha}a_0'a_1' \land a_0a_1\not \leq_{2\alpha+1} a_0'a_1'.\]
(We get this by choosing the right $\ol b$ as always.)
But then $a_0\leq_{2\alpha} a_0'$ and $a_1\leq_{2\alpha} a_1'$ and we have seen above that thus $a_0 \leq_{2\alpha+1} a_0'$ and $a_1\leq_{2\alpha+1} a_1'$. Then $L=L_0+a_0+L_1+a_1+L_2$ and $L=L_0'+a_0'+L_1'+a_1'+L_2'$.
By \cref{lem:akpart} and $a_i\leq_{2\alpha+1} a_i'$ for $i<2$ we get that $L_j\leq_{2\alpha+1}L_j'$ for $j\in \{0,2\}$. It remains to show that $L_1\leq_{2\alpha+1} L_1$. We have that $a_0+ L_1+a_1=a_0+\hat L + L^\alpha + \tilde L+a_1$ where $\hat L$ and $\tilde L$ are end, respectively initial segments of the blocks of $a_0$ and $a_1$ and $L^\alpha$ are the blocks in between; we also get the same for $a_0'$ and $a_1'$. Then $L^\alpha\cong L'^{\alpha}$ as $a_0 \sim_\alpha a_0'$ and $a_1\sim_\alpha a_1'$. But we have that $\tilde L \cong \tilde L'$ and $\hat L \cong \hat L'$ since those are parts of the blocks of $a_0,a_1$ respectively and $a_i'\in aut(a_i)$ for $i<2$. Thus $L_1\cong L_1'$, and again by \cref{lem:akpart} we thus have that $a_0a_1\leq_{2\alpha+1} a_0'a_1'$, a contradiction.
\end{proof}
\end{proof}
Let $L$ be a linear order with $r(L)=\alpha$. Then by \cref{cor:sigma2alpha2ss} it has a $\Sigma_{2\alpha+2}$ Scott sentence and by \cref{thm:pi2alpha2ss} it has a $\Pi_{2\alpha+2}$ Scott sentence. Thus, by \cref{thm:rachael} we have that it has a $\dSinf{2\alpha+1}$ Scott sentence. This proves \cref{thm:mainthm}.

As a corollary we obtain a new proof of a theorem by Nadel~\cite{nadel_scott_1974} that shows that no scattered linear order has high Scott rank, i.e., the Scott rank of every scattered $L$ is less than $\omega_1^L$.
\begin{corollary}
 Let $L$ be a linear order and $r(L)=\alpha$ for a countable ordinal $\alpha$. Then $L$ has categoricity Scott rank $\beta<\omega_1^{L}$.
\end{corollary}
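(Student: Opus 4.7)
The plan is to combine \cref{thm:mainthm} with \cref{thm:ssnopar} and the standard fact that the Hausdorff rank of a countable scattered linear order $L$ is strictly less than $\omega_1^L$.

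By \cref{thm:mainthm}, $L$ has a $d$-$\Sigma_{2\alpha+1}$ Scott sentence, and hence in particular a $\Pi_{(2\alpha+1)+1}$ Scott sentence. Applying the equivalence (1) $\Leftrightarrow$ (2) of \cref{thm:ssnopar} at ordinal $2\alpha+1$ then yields that the categoricity Scott rank of $L$ is at most $2\alpha+1$.

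It remains to verify that $2\alpha+1 < \omega_1^L$, which reduces to showing $\alpha < \omega_1^L$. The plan here is to exhibit an $L$-computable well-founded tree of rank at least $\alpha$: its root is labelled by $L$, and each node labelled by a scattered linear order $M$ of positive Hausdorff rank is expanded, via a uniform choice of decomposition $M = \sum_{i \in \zeta} M_i$ into pieces of strictly smaller Hausdorff rank, to have children labelled by the $M_i$. Scatteredness of $L$ guarantees that this tree is well-founded, and the uniformly arithmetic definability of the block relations $\sim_\beta$ (\cref{prop:blockdefn}) lets one build the tree $L$-computably. The boundedness theorem for $L$-computable well-founded trees then gives $\alpha < \omega_1^L$, so $2\alpha+1 < \omega_1^L$ as required.

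The main obstacle is making the choice of decomposition at each node canonical enough for the construction to be genuinely $L$-computable rather than merely $L$-hyperarithmetic. A clean route is to work with the block relations $\sim_\beta$ themselves: at a node labelled by a structure of rank $\beta+1$ take its children to be the $\sim_\beta$-classes comprising it, and at limit levels iterate $\sim_\beta$ along a fixed $L$-computable fundamental sequence; scatteredness together with \cref{thm:finmodblock} ensures the process terminates, giving a well-founded $L$-computable tree whose rank is precisely $r(L)$.
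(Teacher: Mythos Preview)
Your reduction of the Scott-rank bound to $2\alpha+1$ via \cref{thm:mainthm} and \cref{thm:ssnopar} is exactly what the paper does. The divergence is in the second step, showing $\alpha<\omega_1^{L}$. The paper does not attempt a direct construction here; it simply cites Nadel~\cite{nadel_scott_1974} (and, as an alternative route, the relativization of Montalb\'an's theorem that every hyperarithmetic scattered linear order has Hausdorff rank below $\omega_1^{\mathrm{CK}}$) to conclude $r(L)<\omega_1^{L}$, and then observes that $2r(L)+1$ is still $L$-computable.

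Your attempt to prove $\alpha<\omega_1^{L}$ directly has a genuine gap, and you in fact put your finger on it yourself. The tree you describe is not $L$-computable. To expand a node labelled $M$ into its $\sim_\beta$-classes you must (i) already know $\beta=r(M)-1$, and (ii) compute the relation $\sim_\beta$ on $M$. By \cref{prop:blockdefn} the relation $\sim_\beta$ is only $\Sigma_{2\beta}$-definable, so deciding it requires roughly $2\beta$ jumps over $L$, not just $L$. Thus the expansion step at depth $k$ uses oracle information that grows with the rank of the node, and the resulting tree is $L$-hyperarithmetic rather than $L$-computable. The limit-level clause is worse: choosing ``a fixed $L$-computable fundamental sequence'' for the rank of a node presupposes you already have an $L$-computable notation for that rank, which is precisely what is at stake. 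The boundedness argument therefore does not go through as written. Either invoke Nadel/Montalb\'an as the paper does, or replace your tree by one that is genuinely $L$-computable (for instance, a tree of finite tuples from $L$ whose well-foundedness is equivalent to scatteredness and whose rank dominates $r(L)$); but that is a separate argument you would need to supply.
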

\begin{proof}
    Nadel~\cite{nadel_scott_1974} showed that any scattered linear order $L$ has Hausdorff rank less than $\omega_1^L$.
    For a modern proof of this one can consider the relativization of a result of Montalb\'an~\cite{montalban2005} which says that every hyperarithmetic linear order has Hausdorff rank less than $\omega_1^{\mrm CK}$.
    Clearly, every linear order $L$ is hyperarithmetic in itself and thus $r(L)<\omega_1^{\mrm L}$. By \cref{thm:mainthm} $L$ has a $\dSinf{2\alpha+1}$ Scott sentence and hence it has categoricity Scott rank $2r(L)+1$. Clearly, if $r(L)$ is $L$-computable then so is $2r(L)+1$ and thus $L$ has categoricity Scott rank less than $\omega_1^{L}$.
\end{proof}
\section{Optimality and Index sets}
\label{sec:tightness}
In this section we prove several results about index sets and the optimality of the above results.
We first show that the bounds obtained in \cref{sec:upperbound} are tight.
\begin{proposition}
  For each countable ordinal $\alpha$ there are linear orders $L_1$, $L_2$ such that
  \begin{enumerate}
    \item $L_1$ has a $\Pinf{2\alpha+1}$ Scott sentence but no $\dSinf{2\alpha}$ Scott sentence.
    \item $L_2$ has a $\dSinf{2\alpha+1}$ Scott sentence but no $\Pinf{2\alpha+1}$ or $\Sinf{2\alpha+1}$ Scott sentence,
  \end{enumerate}
\end{proposition}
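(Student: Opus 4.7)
The plan is to take $L_1 = \omega^\alpha$ and $L_2 = \omega^\alpha + \omega^\alpha$. Both have Hausdorff rank $\alpha$---the first is a single $\alpha$-block, while the second has exactly two---so by \cref{thm:mainthm} both carry a $\dSinf{2\alpha+1}$ Scott sentence. For $L_1$ I will invoke Ash's calculation of the back-and-forth relations for countable well-orders, cited in the introduction, which yields that $\omega^\alpha$ has an optimal $\Pinf{2\alpha+1}$ Scott sentence. In particular $L_1$ has no $\dSinf{2\alpha}$ Scott sentence, because by \cref{thm:rachael} such a sentence would sit strictly below $\Pinf{2\alpha+1}$ in the complexity partial order, contradicting optimality.

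For $L_2$ I will show that no $\Sinf{2\alpha+1}$ or $\Pinf{2\alpha+1}$ Scott sentence exists. For the $\Sinf{2\alpha+1}$ claim I will use \cref{thm:sspar}: it suffices to prove that no tuple $\ol a$ in $L_2$ gives $(L_2, \ol a)$ a $\Pinf{2\alpha}$ Scott sentence. Supposing one did, \cref{thm:ssnopar} applied to $(L_2, \ol a)$ shows every orbit is $\Sinf{2\alpha-1}$-definable over $\ol a$, so I can extend $\ol a$ by an element $d$ from the second $\omega^\alpha$ while keeping the Scott sentence of $(L_2, \ol a d)$ in $\Pinf{2\alpha}$. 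The rightmost interval induced by $\ol a d$ is then a final segment of the second $\omega^\alpha$, hence isomorphic to $\omega^\alpha$ itself; by \cref{lem:akpart}, substituting this interval with any $W'$ satisfying $\omega^\alpha \leq_{2\alpha} W'$ preserves the $\leq_{2\alpha}$-relation with $(L_2,\ol a d)$, so its Scott sentence forces $W' \cong \omega^\alpha$. This gives $\omega^\alpha$ itself a $\Pinf{2\alpha}$ Scott sentence, contradicting Ash's optimality.

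For the $\Pinf{2\alpha+1}$ claim I will again use \cref{thm:ssnopar} and exhibit a $2\alpha$-free tuple; the natural candidate is the singleton $\ol a = (c)$ with $c$ the first element of the second $\omega^\alpha$. The verification fixes $\beta < 2\alpha$ and arbitrary $\ol b$ and produces $(c', \ol b')$ with $(c, \ol b) \leq_\beta (c', \ol b')$ yet $(c) \not\leq_{2\alpha} (c')$; the right $c'$ will be another element of the second $\omega^\alpha$ placed far enough inside it that, by Ash's explicit classification of back-and-forth relations on pairs (ordinal, element), its initial segment agrees with $c$'s up to level $\beta$ but not up to level $2\alpha$. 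The principal technical obstacle lies precisely in this step: translating Ash's level-$\beta$ equivalences on pairs into the freeness condition for $(c)$ in the presence of arbitrary extensions $\ol b$. The $\Sinf{2\alpha+1}$ argument, by contrast, is a relatively direct consequence of \cref{lem:akpart} together with the optimality of $\omega^\alpha$'s Scott sentence.
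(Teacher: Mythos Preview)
Your choice of witnesses $L_1=\omega^\alpha$ and $L_2=\omega^\alpha\cdot 2$ coincides with the paper's, and your treatment of item~(1) is essentially the same: both you and the paper appeal to Ash's computation of the back-and-forth relations on ordinals to see that $\omega^\alpha$ cannot have a $\dSinf{2\alpha}$ Scott sentence.

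For item~(2), however, the paper takes a far shorter route than you propose. Ash's characterization (\cite[Lemma~15.10]{ash2000}) gives directly
\[
\omega^\alpha\cdot 3 \;\leq_{2\alpha+1}\; \omega^\alpha\cdot 2 \;\leq_{2\alpha+1}\; \omega^\alpha,
\]
and from this the two negative claims are immediate: a $\Pinf{2\alpha+1}$ Scott sentence for $\omega^\alpha\cdot 2$ would transfer to $\omega^\alpha$, and a $\Sinf{2\alpha+1}$ Scott sentence would transfer to $\omega^\alpha\cdot 3$. No analysis of $\alpha$-freeness or of \cref{thm:sspar} is needed. Since you are already invoking Ash for part~(1), you have the tool in hand; using it here as well collapses your ``principal technical obstacle'' to a one-line observation.

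Your proposed detour also contains a genuine gap. You invoke \cref{thm:sspar} to reduce ``$L_2$ has a $\Sinf{2\alpha+1}$ Scott sentence'' to ``some $(L_2,\ol a)$ has a $\Pinf{2\alpha}$ Scott sentence''. But \cref{thm:sspar} is stated for $\Sinf{\gamma+2}$, so it applies to $\Sinf{2\alpha+1}$ only when $2\alpha$ is a successor ordinal. When $\alpha$ is a limit, $2\alpha$ is a limit as well, and the theorem simply does not cover this case; your argument for the $\Sinf{2\alpha+1}$ half therefore does not go through at limit levels. Likewise, the plan to exhibit a $2\alpha$-free singleton for the $\Pinf{2\alpha+1}$ half, while plausible, is left unfinished and would require a nontrivial case analysis in the presence of arbitrary extension tuples $\ol b$---work that the direct back-and-forth inequality above renders unnecessary.
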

\begin{proof}
  Ash~\cite{ash1986} gave a complete characterization of the back and forth relations for ordinals, see also~\cite[Lemma 15.10]{ash2000}. We will use his characterization to obtain the required examples.

  \emph{ad (1).} By~\cite[Lemma 15.10]{ash2000} $\omega^{\alpha+1}\leq_{2\alpha} \omega^{\alpha}$ and $\omega^{\alpha}\leq_{2\alpha} \omega^{\alpha+1}$. Thus, $\omega^{\alpha}$ cannot have a $\dSinf{2\alpha}$ Scott sentence as otherwise $\omega^{\alpha+1}\cong \omega^{\alpha}$, a contradiction.

  \emph{ad (2).} Again by~\cite[Lemma 15.10]{ash2000} we have that $\omega^{\alpha}\cdot 3 \leq_{2\alpha+1} \omega^{\alpha}\cdot 2 \leq_{2\alpha+1} \omega^{\alpha}$. Thus, if $\omega^{\alpha}\cdot 2$ had a $\Pinf{2\alpha+1}$ or a $\Sinf{2\alpha+1}$ Scott sentence, then $\omega^{\alpha}\cong \omega^{\alpha}\cdot 2$, respectively, $\omega^{\alpha}\cdot 3\cong \omega^{\alpha}\cdot 2$. This is clearly a contradiction.
\end{proof}
We now show that the bound on the complexity of the Scott sentences of linear orders of Hausdorff rank $1$ calculated in \cref{thm:hr1dsicom3} is optimal.
\begin{proposition}\label{prop:indexsetomega}
  Let $L$ be isomorphic to $\omega$, $\omega^*$, or $\zeta$. Then its index set is $\Pi_3^0$ complete.
\end{proposition}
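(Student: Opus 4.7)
The upper bound is immediate from \cref{prop:simplehr1pi3}: each of $\omega$, $\omega^*$, and $\zeta$ is a simple Hausdorff rank $1$ linear order, so each has a $\Picom{3}$ Scott sentence, placing its index set in $\Pi_3^0$.

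For the lower bound I will $m$-reduce a $\Pi_3^0$-complete set to each of the three index sets. Fix a computable predicate $R$ so that $S=\{e:\forall n\,\exists m\,\forall k\,R(e,n,m,k)\}$ is $\Pi_3^0$-complete. For $I_\omega$ I construct, uniformly computably in $e$, a linear order $M_e=T_0^e+T_1^e+\cdots$ whose blocks are built in stages so that $T_n^e$ ends up finite and nonempty when $\exists m\,\forall k\,R(e,n,m,k)$ holds and $T_n^e\cong\omega$ otherwise. A standard realization: at stage $s$ introduce a fresh block $T_s^e$ with one element, and for each $n<s$ let $m_n^s$ be the least $m\leq s$ with $R(e,n,m,k)$ for all $k\leq s$ (undefined otherwise); whenever $m_n^s\neq m_n^{s-1}$, append a new element to the right end of $T_n^e$. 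Then $T_n^e$ is finite iff $m_n^s$ stabilizes iff $\exists m\,\forall k\,R(e,n,m,k)$.

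If $e\in S$, every $T_n^e$ is finite nonempty, so $M_e$ is an $\omega$-sum of finite nonempty orders and $M_e\cong\omega$. If $e\notin S$, pick $n$ with $T_n^e\cong\omega$: any element of $T_{n+1}^e$ then has infinitely many predecessors in $M_e$, placing it in a $1$-block distinct from the elements of $T_n^e$, so $M_e\not\cong\omega$ as $\omega$ is a single $1$-block. The reduction for $I_{\omega^*}$ is symmetric (blocks are grown to the left). For $I_\zeta$ I take $M_e=M_e^{-}+M_e^{+}$ with $M_e^{-}$ from the $\omega^*$-construction and $M_e^{+}$ from the $\omega$-construction: $e\in S$ gives $M_e\cong\omega^*+\omega=\zeta$, while $e\notin S$ produces two distinct $1$-blocks inside one of the halves, which persist as distinct $1$-blocks of $M_e$ and break the single-$1$-block character of $\zeta$.

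The main obstacle is the verification in the negative case: one has to argue that an infinite $T_n^e$ followed by a nonempty block really manifests as two distinct $1$-blocks of $M_e$. This hinges on the simple observation that the interval from $T_n^e$ into $T_{n+1}^e$ is infinite, so the elements across it are not $\sim_1$-related. Once this is pinned down, routine bookkeeping completes all three reductions.
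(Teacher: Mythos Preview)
Your strategy is the same as the paper's: reduce a $\Pi_3^0$-complete predicate to the index set by building, uniformly in $e$, a linear order that is $\omega$ in the positive case and acquires extra structure in the negative case. The paper makes the offending interval \emph{dense} (so the result is not even scattered), whereas you make it order type $\omega$ (producing a second $\sim_1$-block); either choice works, and your treatments of the upper bound and of the $\omega^*$ and $\zeta$ cases are fine.

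There is one small glitch in your movable-marker implementation. You assert that $m_n^s$ stabilizes iff $\exists m\,\forall k\,R(e,n,m,k)$, but this fails when the counterexamples appear quickly: if $\neg\exists m\,\forall k\,R(e,n,m,k)$ and yet for all large $s$ every $m\le s$ already has a witness $k\le s$ with $\neg R(e,n,m,k)$, then $m_n^s$ is undefined for all large $s$, you stop appending to $T_n^e$, and $T_n^e$ stays finite. For such $e\notin S$ your $M_e$ could still be $\cong\omega$, breaking the reduction. The fix is trivial: declare $m_n^s=s$ whenever no $m\le s$ works, so the marker is always defined; then $\neg\exists m\,\forall k\,R$ forces $m_n^s\to\infty$ and hence infinitely many changes. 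Alternatively, use the specific $\Pi_3^0$ presentation the paper uses, namely ``$\forall x\,(W_{g(e,x)}\text{ is finite})$'', where the marker (the least stage at which $W_{g(e,x)}$ attains its current value) is always defined and this issue does not arise.
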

\begin{proof}
We only give the proof for $\omega$, the proofs for $\omega^*$ and $\zeta$ follow the same scheme. Let $(\C_i)_{i\in \omega}$ be a computable enumeration of partial structures and assume that $P\subseteq \omega$ is $\Pi_3^0$. We will build a computable function $f$ such that
\[ \C_{f(p)}\cong \omega \LR p\in P. \]
Given $P$, there is a computable function $g$ such that
\[ p\in P \LR \forall x\ W_{g(p,x)} \text{ is finite.}\]
We build the structure $\C_{f(p)}$ in stages, i.e., $\C_{f(p)}=\lim_{s} \C_{f(p),s}$. Let  $\C_{f(p),0}\cong\omega$. Assume we have defined $\C_{f(p),s}$; at stage $s+1$ for every $x$ and every $y$ such that $y\enters W_{g(p,x),s}$ add a fresh element between any two elements in the interval $[x, x+1]$.\footnote{We work under the standard assumption that only $y<s$ may enter $W_{g(p,x),s}$.} This finishes the construction.

\emph{Verification:} Assume $p\in P$; then for all $x$, $W_{g(p,x)}$ is finite and thus by construction the intervals $[x,x+1]$ are all finite. Hence, $\C_{f(p)} \cong \omega$. If $p\not\in P$, then there is $x$ such that $W_{g(p,x)}$ is infinite and thus between any two elements in $[x,x+1]$ we can find another element. Therefore $(x,x+1)$ is dense and $\C_{f(p)}\not \cong \omega$.
\end{proof}
\begin{corollary}\label{cor:indexsethr1}
  Let $L$ be a linear order such that $r(L)=1$, then $I_L$ is $\Pi_3^0$ hard.
\end{corollary}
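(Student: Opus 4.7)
The plan is to bootstrap from \cref{prop:indexsetomega} by embedding its construction into one infinite $1$-block of $L$ while keeping the remaining blocks rigidly constructed.

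Since $r(L)=1$, write $L$ in $1$-block form as $L = L_1 + \cdots + L_n$. At least one $L_k$ must be infinite, since otherwise $L$ would be finite and of Hausdorff rank $0$. Fix such a $k$; then $L_k$ is isomorphic to $\omega$, $\omega^*$, or $\zeta$. Given a $\Pi_3^0$ set $P$ with associated computable function $g$ such that $p \in P \LR \forall x\ W_{g(p,x)}$ is finite, I would define a computable function $f$ so that $\mc{C}_{f(p)}$ is built in stages as follows: begin with a fixed computable copy of $L_1 + \cdots + L_{k-1} + L_k + L_{k+1} + \cdots + L_n$ in which $L_k$ is the appropriate infinite block from \cref{prop:indexsetomega}; then run the construction of \cref{prop:indexsetomega} inside the coordinates corresponding to $L_k$, so that whenever an element enters $W_{g(p,x),s}$ a fresh point is inserted between consecutive elements of the interval $[x,x+1]$ of $L_k$. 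The other blocks $L_j$, $j \neq k$, remain untouched throughout the construction.

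For the verification, if $p \in P$, each $W_{g(p,x)}$ is finite, so the $L_k$ coordinates build a copy of the original infinite block, and hence $\mc{C}_{f(p)} \cong L_1 + \cdots + L_n = L$. If $p \notin P$, then for some $x$ the set $W_{g(p,x)}$ is infinite, so the interval $[x,x+1]$ inside the $L_k$-part becomes densely ordered; since $L$ is scattered and $\mc{C}_{f(p)}$ now contains a dense suborder, $\mc{C}_{f(p)} \not\cong L$. Hence $p \in P \LR \mc{C}_{f(p)} \in I_L$, giving a many-one reduction from an arbitrary $\Pi_3^0$ set to $I_L$.

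I do not anticipate a serious obstacle: the essential content is already in \cref{prop:indexsetomega}, and the only additional observation needed is that corrupting a single infinite $1$-block by inserting a dense interval produces a non-scattered structure, which therefore cannot be isomorphic to the scattered order $L$. One small bookkeeping point is to choose, once and for all, computable presentations of the finitely many ``static'' blocks $L_j$ ($j \neq k$) so that $f$ remains total computable; this is routine since each $L_j$ is one of finitely many fixed order types ($\omega$, $\omega^*$, $\zeta$, or finite).
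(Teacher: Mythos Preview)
Your proposal is correct and follows essentially the same approach as the paper: fix one infinite $1$-block, plug in the construction from \cref{prop:indexsetomega} there, and leave the remaining (computably presentable) blocks fixed, so that $p\in P$ yields a copy of $L$ while $p\notin P$ yields a non-scattered order. The paper phrases the decomposition as $L=L_1+L_2+L_3$ with $L_2$ the chosen infinite block rather than listing all $1$-blocks, but the argument is the same.
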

\begin{proof}
  Let $L$ be a linear order with $r(L)=1$. We have that $L$ is a finite sum of blocks of type $\omega$, $\omega^*$, $\zeta$ or $n$ and that it contains at least one $\omega$, $\omega^*$, or $\zeta$ block. Assume without loss of generality that  $L$ is computable and that $L=L_1+L_2+L_3$ where $L_2$ is isomorphic to either $\omega$, $\omega^*$, or $\zeta$, and $L_1$, $L_2$, $L_3$ are all computable and disjoint.
  Fix a $\Pi^0_3$ set $P$ and let $(\C_i)_{i\in \omega}$ be a computable enumeration of partial structures. Consider the computable function $g$ such that
  \[ \mc C_{g(p)}=L_1+\mathcal{C}_{f(p)}+L_3\]
  where $\mathcal{C}_{f(p)}$ is the structure constructed in the proof of \cref{prop:indexsetomega} but with the same universe as $L_2$. Then clearly
  \[ \mc C_{g(p)}\cong L \LR p\in P\]
  and thus $I_L$ is $\Pi_3^0$ hard.
\end{proof}
\cref{prop:simplehr1pi3} and the relativizations of \cref{prop:indexsetomega} and \cref{cor:indexsethr1} show the following.
\begin{theorem}
  Simple linear orders of Hausdorff rank 1 have $\Picom{3}$ optimal Scott sentences.
\end{theorem}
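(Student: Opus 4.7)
The plan is to combine the $\Picom{3}$ upper bound from \cref{prop:simplehr1pi3} with an optimality argument via relativized index set hardness. The upper bound is already in hand, so the task reduces to showing that no Scott sentence of a simple Hausdorff rank 1 order $L$ can be strictly simpler than $\Picom{3}$ in the partial order displayed under \cref{thm:rachael}, even allowing non-computable sentences. The key tool is the observation from Section~1 that a $\Sinf{\alpha}^{X}$ Scott sentence for $\A$ forces $I_\A^{X} \in \Sigma_{\alpha}^{0}(X)$; thus a lower bound of $\Pi_{\alpha}^{0}(X)$-hardness on $I_\A^{X}$, uniform in $X$, rules out every simpler Scott sentence.

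First, I would relativize \cref{prop:indexsetomega} (and hence \cref{cor:indexsethr1}) to an arbitrary oracle $X \subseteq \omega$. The construction there uses only a computable enumeration of partial structures and a computable $g$ witnessing $p \in P \LR \forall x\ W_{g(p,x)} \text{ is finite}$ for a given $\Pi_{3}^{0}$ set $P$. Replacing both by $X$-computable analogues for an arbitrary $\Pi_{3}^{0}(X)$ set $P$ yields the same reduction relativized to $X$. So $I_L^{X}$ is $\Pi_{3}^{0}(X)$-hard for every simple Hausdorff rank 1 order $L$ and every $X$.

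Now suppose for contradiction that $L$ has a Scott sentence strictly simpler than $\Picom{3}$. Every complexity class strictly below $\Picom{3}$ in the partial order sits at or below $\dSinf{2}$, and every $\dSinf{2}$ formula is in particular $\Sinf{3}$. Since every infinitary sentence is $X$-computable for some $X$, such a Scott sentence would be $\Sinf{3}^{X}$, forcing $I_L^{X} \in \Sigma_{3}^{0}(X)$. This contradicts the $\Pi_{3}^{0}(X)$-hardness established above. Therefore $\Picom{3}$ is optimal.

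I expect the main obstacle to be essentially bookkeeping rather than conceptual: checking that the construction in \cref{prop:indexsetomega} (adding fresh elements between pairs in $[x,x+1]$) and the reduction in \cref{cor:indexsethr1} (prepending and appending fixed computable pieces) both relativize cleanly, so that the hardness transfers uniformly to every oracle $X$. Since both arguments are already phrased in terms of a computable enumeration and a standard $\Pi_{3}^{0}$ index, this verification should be routine.
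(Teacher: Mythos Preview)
Your proposal is correct and matches the paper's own argument essentially line for line: the paper's proof is a single sentence invoking \cref{prop:simplehr1pi3} for the upper bound and the relativizations of \cref{prop:indexsetomega} and \cref{cor:indexsethr1} for the lower bound, exactly as you outline. Your additional remarks about why $\Pi_3^0(X)$-hardness of $I_L^X$ rules out a $\Sinf{3}^X$ (hence any $\dSinf{2}$ or lower) Scott sentence simply make explicit what the paper leaves implicit.
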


\begin{lemma}\label{lem:notsimplehr1}
Let $L$ be a linear order such that $r(L)=1$. Then $L$ is not simple or of order type $m+\zeta+n$ with $m,n\geq 0$ if and only if $L$ contains an interval $I$ isomorphic to $\omega+\omega$, $\omega^*+\omega^*$, $\omega+\zeta$, $\zeta+\omega^*$, or $\omega+n+\omega^*$ for some $n>0$.
\end{lemma}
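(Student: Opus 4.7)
My plan is to work with the $1$-block decomposition $L = B_1 + B_2 + \cdots + B_p$, where each $B_i$ is a maximal $1$-block, necessarily of order type $\omega$, $\omega^*$, $\zeta$, or finite positive, and where no two adjacent blocks merge --- the adjacencies $(\omega^*, \omega)$, $(k, \omega)$, $(\omega^*, k)$, $(k, j)$ each collapse into a single block and so do not occur in the decomposition. A routine check identifies the good $L$ (those simple or of type $m + \zeta + n$) as exactly those whose $1$-block pattern is one of
\[
(\omega),\ (\omega^*),\ (\zeta),\ (\omega, k),\ (k, \omega^*),\ (\omega, \omega^*),\ (k, \zeta),\ (\zeta, k),\ (k, \zeta, k'),
\]
for positive integers $k, k'$.

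The backward direction of the lemma is then immediate by inspection: for each good pattern every convex sub-order of $L$ --- bounded or unbounded, in the paper's interval notation --- has order type in $\{\text{finite},\ \omega,\ \omega^*,\ \omega + k,\ k + \omega^*,\ \omega + \omega^*,\ \zeta,\ m' + \zeta + n'\}$, none of which is on the bad list of the lemma.

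For the forward direction, I assume $L$ is bad and case on the $1$-block pattern. If $p \le 2$, the non-merging bad pairs are exactly $(\omega, \omega), (\omega, \zeta), (\omega^*, \omega^*), (\omega^*, \zeta), (\zeta, \omega), (\zeta, \omega^*), (\zeta, \zeta)$. For four of these $L$ itself is already a bad interval; for the remaining three I use half-infinite intervals together with the decomposition $\zeta = \omega^* + \omega$: in $(\omega^*, \zeta)$ the interval $(-\infty, z]$ with $z$ in the $\omega^*$-part of the $\zeta$-block has order type $\omega^* + \omega^*$; in $(\zeta, \omega)$ the interval $[x, \infty)$ with $x$ in the $\omega$-part of the $\zeta$-block is $\omega + \omega$; and in $(\zeta, \zeta)$ the analogous $[x, \infty)$ is $\omega + \zeta$.

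If $p \ge 3$, either some consecutive pair $(B_i, B_{i+1})$ is bad, in which case the convex sub-order spanning just those two blocks reduces matters to the $p = 2$ step, or every consecutive pair lies on the good-pair list $\{(\omega, k), (k, \omega^*), (\omega, \omega^*), (k, \zeta), (\zeta, k)\}$. Tracing this finite graph of transitions on the vertices $\{\omega, \omega^*, \zeta, k\}$ shows that any admissible pattern of length $\ge 3$ not on the good list must either contain one of the triples $(\omega, k, \omega^*)$, $(\omega, k, \zeta)$, $(\zeta, k, \omega^*)$, $(\zeta, k, \zeta)$, or extend $(k, \zeta, k')$ by a further block. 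In each such configuration a bounded interval running from the $\omega$-part to the left of the middle finite block through to the $\omega^*$-part on the right has order type $\omega + n + \omega^*$ for some $n > 0$, furnishing the required bad interval. The main obstacle I expect is keeping the case analysis tight and, in each bad case, selecting concrete endpoints so that the convex sub-order realises exactly the desired bad order type from the list.
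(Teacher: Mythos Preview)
Your proof is correct and follows essentially the paper's approach: a case analysis on the $1$-block decomposition. The paper organizes its cases by restricting attention to the first three (or four) blocks and splitting on which of $L_1,L_2,L_3$ have rank $0$ versus rank $1$, whereas you enumerate all admissible adjacent block-type pairs and trace the small transition graph on $\{\omega,\omega^*,\zeta,k\}$; your version is more explicit (indeed your ``extend $(k,\zeta,k')$'' clause is already subsumed by the four bad triples), but the two arguments are the same in substance.
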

\begin{proof}
The direction from right to left follows directly from the definition of simple linear orders. For the other direction assume that $L$ is not simple or of order type $m+\zeta+n$. Then the order type of $L/{\sim}$ must be greater or equal to $2$. If $L/{\sim}\cong 2$, then again by the definition of simple it must contain an interval isomorphic to one of the above orders. If $L/{\sim}\cong n$ with $n>2$, then let $L'$ be the restriction of $L$ to its first $3$ blocks.

Assume that $L'=L_1+L_2+L_3$ is the block form of $L'$. We distinguish three cases:
\begin{enumerate}
\item If $rk(L_1)=rk(L_2)=1$, then either $L_1+L_2$ contains an interval isomorphic to $\omega+\omega$, $\omega^*+\omega^*$, $\omega+\zeta$, or $\zeta+\omega^*$ and we are finished. Or $L_1+L_2\cong \omega+\omega^*$. In that case, notice that $rk(L_3)=1$ and that $L_3$ does not have a left limit. It follows that $L'$ contains an interval isomorphic to $\omega^*+\omega^*$. The case when $rk(L_2)=rk(L_3)=1$ is symmetric.
\item If $rk(L_2)=0$, then $L_2$ must contain an element which is a left limit in $L'$ and an element which is a right limit in $L'$. Thus, $L'$ contains an interval isomorphic to $\omega+n+\omega^*$ for some $n$. 
\item If $L'\cong m+\zeta+n$ for some $n,m>0$. Then let $L_4$ be the fourth block of $L$. The block $L_3=n$ has a right limit in $L$ and thus $L$ contains an interval isomorphic to $\omega+n+\omega^*$.

\end{enumerate}
\end{proof}

\begin{lemma}\label{lem:sigma3hard2blocks}
Let $L$ be a linear order such that $r(L)=1$ and $L$ contains an interval isomorphic to $\omega+\omega$, $\omega^*+\omega^*$, $\omega+\zeta$, or $\zeta+\omega$. Then $I_L$ is $\Sigma^0_3$ hard.
\end{lemma}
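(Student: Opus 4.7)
The plan is to reduce a $\Sigma_3^0$-complete set to $I_L$. As in the proof of \cref{cor:indexsethr1}, this reduces to the case where $L$ is itself one of the four distinguished intervals: writing $L = L_1 + I + L_2$ with $L_1, L_2$ computable and $I$ isomorphic to one of $\omega+\omega$, $\omega^*+\omega^*$, $\omega+\zeta$, $\zeta+\omega^*$, any reduction producing $\mc C'_{f(p)} \cong I \iff p\in P$ extends to $\mc C_{f(p)} := L_1 + \mc C'_{f(p)} + L_2$. So the task reduces to proving $\Sigma_3^0$-hardness for the four base cases.

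For the base cases I will use the characterization that $P \in \Sigma_3^0$ iff there is a computable $g$ such that $p \in P \iff \exists x\ W_{g(p,x)}$ is infinite. I will sketch the construction for $I = \omega + \omega$; the other three are obtained by reversing the direction of growth (for $\omega^*+\omega^*$) or by growing in both directions (for $\omega+\zeta$ and $\zeta+\omega^*$). Build $\mc C_{f(p)} = \bigcup_s \mc C_{f(p),s}$ stage by stage. At every stage extend a distinguished $\omega$-chain $a_0 < a_1 < \cdots$ by one element, so that these $a_i$ form an $\omega$-chain in the limit (the intended first block). For each $x \in \omega$ reserve a ``column'' of elements intended to contribute to the second $\omega$-block: at stage $s$, whenever $|W_{g(p,x),s}|$ grows, add the next column-$x$ element above all $a_i$'s. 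Columns are kept in a priority order: whenever column $x$ acts, insert finitely many ``absorber'' elements between the current top of column $x$ and the bottom of column $x+1$, with the effect of making all subsequent additions in columns $y > x$ lie in the same $\sim_1$-block as column $x$'s growth.

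The intended verification: if $p \in P$, let $x_0$ be the least witness. Then column $x_0$ grows infinitely; smaller columns eventually stop, and larger columns are absorbed by the repeated interpolations whenever $x_0$ acts, so the region above $\{a_i\}$ becomes one $\omega$-block, giving $\mc C_{f(p)} \cong \omega + \omega$. If $p \notin P$, no column is infinite, so no column ever definitively ``wins.'' Activations across infinitely many columns produce infinitely many absorber-bursts without any single column's block absorbing cofinitely many others, yielding a structure whose region above $\{a_i\}$ has infinitely many distinct $\sim_1$-blocks (an $\omega\cdot\omega$-like region), hence $\mc C_{f(p)} \not\cong \omega + \omega$.

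The main obstacle will be designing the priority/absorption so that both implications hold simultaneously: in the positive case, guaranteeing that the absorbers really do merge columns $y > x_0$ into the single $\omega$-block of column $x_0$ (without forming spurious left-limit elements that split the block), and that the finite contributions from columns $x < x_0$ do not accidentally produce a third $\omega$-block; in the negative case, ensuring that the accumulated absorbers genuinely generate infinitely many blocks rather than collapsing back to two. This requires careful choice, at each stage, of where in the existing linear order each new column element and each absorber is placed, together with a detailed analysis of how the $\sim_1$-relation evolves. The $\omega^*+\omega^*$ case is the mirror of the above; for $\omega+\zeta$ and $\zeta+\omega^*$, each column must grow both upward and downward from a fixed anchor so that a ``winning'' column realizes a $\zeta$-block instead of an $\omega$-block.
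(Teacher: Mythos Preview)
Your reduction to the four base cases matches the paper exactly. The difference is entirely in the base case, and here the paper takes a much simpler route that sidesteps your priority/absorption machinery altogether: by a standard normal form (the paper cites \cite[Theorem~4.3.11]{soare2016}), one may assume without loss of generality that whenever $e\in S$ there is a \emph{unique} $x$ with $W_{g(e,x)}$ infinite. With uniqueness in hand the $\omega+\omega$ construction is immediate: start with a copy of $\omega$ on points $\langle x,0\rangle$, and whenever $W_{g(e,x)}$ grows at stage $s+1$ append a fresh point to the end of $[\langle x,0\rangle,\langle x+1,0\rangle)$. If $e\in S$ exactly one interval becomes infinite, creating exactly one left-limit point and hence order type $\omega+\omega$; if $e\notin S$ every interval stays finite and the result is just $\omega$. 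The other three cases are handled by reversing or by growing the interval in both directions. No absorbers, no priorities.

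Your absorber scheme, as you describe it, does not do what you want in either direction, so the ``main obstacle'' you flag is a genuine gap rather than a detail. In the positive case, if column $x_0$ acts infinitely often and you repeatedly insert elements between the rising top of column $x_0$ and the fixed bottom of column $x_0+1$, then infinitely many points accumulate below that bottom element, making it a left limit; columns $x_0$ and $x_0+1$ therefore land in \emph{distinct} $1$-blocks rather than merging. In the negative case, if every column is finite then every inter-column gap also receives only finitely many absorbers, so no left limit is created inside the column region; whenever infinitely many columns are nonempty (which is certainly possible with all $W_{g(p,x)}$ finite) the entire region above the $a_i$'s collapses to a single $\omega$-block, yielding $\omega+\omega$ precisely when you need $\not\cong\omega+\omega$. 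The missing idea is the uniqueness-of-witness reduction; once you invoke it, the obstacle disappears.
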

\begin{proof}
We may assume without loss of generality that $L=L_1+L_2+L_3$ where $L_2$ is of order type $\omega+\omega$, $\omega^*+\omega^*$, $\omega+\zeta$, or $\zeta+\omega$, and all $L_i$ are computable. We first deal with the case that $L_2\cong \omega+\omega$. The constructions for the other cases are pretty similar. We will show how to adapt our construction to these cases at the end of the proof.

Consider any $\Sigma^0_3$ set $S$. We build a computable function $f$ such that
\[ \mc C_{f(e)}\cong \omega +\omega \LR e\in S.\]
As $S$ is $\Sigma^0_3$ there is a computable function $g$ such that
\[ e\in S \LR \exists x\ W_{g(e,x)} \text{ is infinite.}\]
We may furthermore assume without loss of generality that if $e\in S$ then there exists unique $x$ such that $W_{g(e,x)}$ is infinite, see~\cite[Theorem 4.3.11]{soare2016}. We build the structure $\C_{f(e)}$ in stages. The elements $\langle x,0\rangle $ will be the potential limit points of the first copy of $\omega$ we are building.

\emph{Construction:} The structure $\C_{f(e),0}$ has universe $\{ \langle x,0\rangle : x\in \omega\}$ and $\langle x,0\rangle <\langle y,0\rangle$ if and only if $x<y$ for all $x,y$.
Assume we have defined $\mc C_{f(e),s}$ and are at stage $s+1$ of the construction. For every $x<s$ check if there is $y\in W_{g(e,x),s}\setminus W_{g(e,x),s-1}$ and if so add $\langle x,s+1\rangle$ to the end of $[\langle x,0\rangle,\langle x+1,0\rangle)$. This finishes the construction.

\emph{Verification:} Clearly $\mc C_{f(e)}=\lim_s \mc C_{f(e),s}$ is computable. If $e\in S$, then there is exactly one $x$ such that $W_{g(e,x)}$ is infinite. Hence $\langle x+1,0\rangle$ is a limit point, $(\infty,\langle x+1,0\rangle)\cong \omega$, $[\langle x+1,0\rangle,\infty)\cong \omega$ and thus $C_{f(e)}\cong \omega+\omega$. On the other hand, if $e\not\in S$, then for all $x$, $W_{g(e,x)}$ is finite. Therefore $\mc C_{f(e)}$ will not contain a limit point and will be isomorphic to $\omega$.

Without loss of generality $L_2$ is computable and thus we may pull back $\mc C_{f(e)}$ to have the same universe. We can now define $\mc C_{g(e)}$ such that
\[ \mc C_{g(e)}\cong L_1+ \mc C_{f(e)}+L_3\cong L \LR e\in S.\]
This shows that $I_L$ is $\Sigma^0_3$ hard.

If $L$ contains an interval of type $\omega^*+\omega^*$, use $\mc (C_{f(e)})^*$ instead of $\mc C_{f(e)}$. If $L$ contains an interval of type $\omega+\zeta$ let $\mc C_{h(e),0}$ be a copy of $\omega$ as above. At stage $s+1$, if there is $y\in W_{g(e,x),s}\setminus W_{g(e,x),s-1}$, then add $\langle x,2s+2\rangle$ to the end of the interval $[\langle x,0\rangle, \langle x,2t+2\rangle]$ where $t$ is the last stage less than $s$ where $W_{g(e,x),{t}}\setminus W_{g(e,x),t-1}\neq \emptyset$ and add $\langle x,2s+3\rangle$ to the beginning of the interval $[\langle x,2t+3\rangle, \langle x+1,0\rangle]$. It is not hard to see that then $\mc C_{h(e)}$ is as required. For $\zeta+\omega^*$ use $\mc (C_{h(e)})^*$.
\end{proof}
\begin{lemma}
 Let $L$ be a linear order that contains an interval isomorphic to $\omega+n+\omega^*$ for some $n>0$. Then $I_L$ is $\Sigma^0_3$ hard.
\end{lemma}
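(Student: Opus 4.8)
The plan is to establish $\Sigma^0_3$-hardness by a computable many-one reduction, following the template of \cref{lem:sigma3hard2blocks}. Fix a $\Sigma^0_3$-complete set $S$. Since $L$ contains an interval isomorphic to $\omega+n+\omega^*$, we may write $L=L_1+L_2+L_3$ with $L_2\cong\omega+n+\omega^*$ and all three parts computable and disjoint. It then suffices to build a computable $f$ with
\[\mc C_{f(e)}\cong\omega+n+\omega^*\LR e\in S,\]
since setting $\mc C_{h(e)}=L_1+\mc C_{f(e)}+L_3$ (with $\mc C_{f(e)}$ pulled back onto the universe of $L_2$) yields a computable $h$ with $\mc C_{h(e)}\cong L\LR e\in S$. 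As in the previous lemma, fix a computable $g$ with $e\in S\LR\exists x\ W_{g(e,x)}$ infinite, and assume without loss of generality that this witness $x$ is unique when $e\in S$.

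For the construction I would fix the left tail and the middle block and code $S$ into the right tail, so that, as in the earlier cases, the decisive feature is the presence or absence of a single limit point. At stage $0$ I lay down an honest copy of $\omega$, then the $n$-element block $c_1<\cdots<c_n$ above it (this creates the left gap unconditionally), and a global maximum $E$ above everything. During the construction I grow, just above $c_n$, a descending chain of new elements, each inserted immediately above $c_n$ and below the current least chain element, so that the chain tends down to $c_n$ and becomes an $\omega^*$ precisely if infinitely many elements are ever added to it. If I can arrange that this happens exactly when $e\in S$, then for $e\in S$ the chain together with $E$ forms an $\omega^*$ separated from $c_n$ by a gap and $\mc C_{f(e)}\cong\omega+n+\omega^*$, while for $e\notin S$ only finitely many elements are added, the chain and $E$ collapse into the finite block containing $c_1,\dots,c_n$, and $\mc C_{f(e)}\cong\omega+m$ for some finite $m\geq n$. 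Since $\omega+n+\omega^*$ has two infinite blocks while $\omega+m$ has only one, the order $L_1+\mc C_{f(e)}+L_3$ is isomorphic to $L$ if and only if $e\in S$, provided the cut is chosen so that the tails of $\mc C_{f(e)}$ are not absorbed into $L_1$ or $L_3$.

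The main obstacle is driving this conditional $\omega^*$ in a correct, \emph{bug-free} way. The naive rule ``add a chain element whenever some $W_{g(e,x)}$ enumerates'' fails, because infinitely many non-witness sets $W_{g(e,x)}$---each finite but of unbounded size---could jointly trigger infinitely many additions and so manufacture a spurious $\omega^*$ when $e\notin S$; a single growing chain cannot distinguish ``one set is infinite'' from ``unboundedly many finite sets''. This is exactly where the uniqueness normalization of the witness is needed: as in \cref{lem:sigma3hard2blocks} one must spread the potential growth over separate, capped sub-intervals indexed by $x$ and use a movable marker tracking the current candidate witness, so that a genuine limit forms if and only if the marker eventually settles on a truly infinite $W_{g(e,x)}$. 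The delicate point, and the only place where this case differs substantially from the $\omega+\zeta$ construction, is to carry this out so that the surviving limit is a \emph{terminal} $\omega^*$ converging to $c_n$, rather than a two-sided gap meeting the backbone and producing a spurious $\zeta$. Once the bookkeeping is arranged correctly the verification is the same routine argument as in the previous lemmas.
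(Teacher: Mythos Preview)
Your proposal has a real gap. Fixing the $n$-block $c_1<\cdots<c_n$ at the outset and then trying to grow a \emph{single} conditional $\omega^*$ above $c_n$ cannot be made to work by the mechanism you sketch. The analogy with \cref{lem:sigma3hard2blocks} breaks down precisely here: in the $\omega+\omega$ (or $\omega+\zeta$) case the $\omega$-backbone $\{\langle x,0\rangle:x\in\omega\}$ simultaneously serves as the default output structure \emph{and} supplies infinitely many independent intervals, one per potential witness $x$, in which a limit may or may not form. But any infinite family of slots placed above a fixed $c_n$---whether laid down at once or grown stage by stage---must accumulate either at $c_n$ or at $E$, so one of these becomes a limit point unconditionally and the construction no longer distinguishes $e\in S$ from $e\notin S$. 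Your phrases ``separate, capped sub-intervals indexed by $x$'' and ``movable marker'' do not resolve this; they are in tension with the single-chain, fixed-$n$-block setup you describe, and you give no concrete rule for which the chain is infinite exactly when some $W_{g(e,x)}$ is.

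The paper's proof abandons the fixed $n$-block entirely and runs a finite-injury priority construction. Each $x$ carries its own candidate block $c^x_1,\dots,c^x_n$, created only when $W_{g(e,x)}$ enumerates; whenever a higher-priority $x$ acts it reinitializes all lower-priority candidates. If $e\in S$, the block belonging to the least $x_0$ with $W_{g(e,x_0)}$ infinite eventually stabilizes, and the elements coming from reinitialized candidates are absorbed into the surrounding $\omega$ and $\omega^*$ tails; if $e\notin S$, no candidate block is ever permanent and no right-limit forms. This dynamic placement of the $n$-block is exactly the idea your sketch is missing.
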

\begin{proof}
Similarly to the proof of \cref{lem:sigma3hard2blocks} we may assume that $L=L_1+L_2+L_3$ where $L_2\cong \omega+n+\omega^*$. Let $S$ be a $\Sigma^0_3$ set and $g$ be a computable function such that
\[ e\in S \LR \exists x\ W_{g(e,x)}\text{ is infinite}.\]
We will build a structure $\mathcal C_{f(e)}$ in stages such that
\[ C_{f(e)}\cong \omega+n+\omega^* \LR e\in S.\]
Our construction is essentially a priority construction where for each $x$ we have a worker trying to build an $n$ block marked by constants $c^x_1,\dots, c^x_n$. If $x<y$ and $x$ acts at some stage $s$, then it will \emph{initiate} all $y>x$, i.e., it will reset the constants $c^y_1,\dots, c^y_n$ to be undefined. If $c^x_1,\dots, c^x_n$ is undefined at stage $s$ then we say that \emph{$x$ is initiated}.

\emph{Construction:} The structure $\mathcal{C}_{f(e),0}$ is empty and all $x$ are initiated. Assume we have defined the structure $\mathcal C_{f(e),s}$ and are at stage $s+1$ of our construction. Let $x$ be the least such that $W_{g(e,x),s}\setminus W_{g(e,x),s-1}\neq \emptyset$ and proceed as follows:
\begin{enumerate}
    \item If $x$ is initated and all other $y$ are initiated, then no worker has acted before and thus $\mathcal C_{f(e),s}$ is empty. Let $c^x_i=\langle s,i\rangle$, and let $\C_{f(e),s+1}$ be the elements $c^x_i$ ordered lexicographically.
    \item If $x$ is initiated and there is $y$ which is not initiated, then let $y_0$ be the least such $y$. Let $c^x_i=\langle s,i\rangle$, and let the universe of $\C_{f(e),s+1}^\star$ be the universe of $\C_{f(e),s}$ union $\{\langle s,i\rangle: 0<i\leq n\}$. Order $\C_{f(e),s+1}^\star$ by extending $\C_{f(e),s}$ such that the $c^x_i$ are lexicographically ordered and $c^x_1$ is the successor of $c^{y_0}_n$. 
    \item Obtain $\C_{f(e),s+1}$ by adding $\langle s,0\rangle$ to the end of the interval $(-\infty, c^x_1)$ and $\langle s,n\rangle$ to the start of the interval $(c^x_n,\infty)$.
    \item Initiate all $y>x$.
\end{enumerate}

\emph{Verification:}
Assume $e\in S$. Let $x_0$ be the least such that $W_{g(e,x_0)}$ is infinite. Then $x_0$ is allowed to act infinitely many times and there exists a stage $t$ such that for all $s>t$ $x_0$ is not initiated. Notice that if $y>x_0$ is acting at stage $s>t$, then there is a stage $r>s$ at which $y$ is initiated again. Hence, $(c^{x_0}_n,\infty)\cong \omega^*$ and $(-\infty,c^{x_0}_1)\cong \omega$ and thus $\C_{f(e)}\cong \omega+n+\omega^*$.

On the other hand, if $e\not\in S$, then no $x$ acts infinitely often. Therefore, for no $x$, the element $c^x_n$ will be a left limit a the end of the construction and thus $\C_{f(e)}$ will not be isomorphic to $\omega+n+\omega^*$.
\end{proof}
\begin{corollary}
Let $L$ be a linear order with $r(L)=1$ that is not simple or of order type $\zeta$. Then it has a $\dSinf{3}$ optimal Scott sentence.
\end{corollary}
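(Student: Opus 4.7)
The plan is to combine the upper bound from \cref{thm:hr1dsicom3} with hardness results on $I_L$. By \cref{thm:hr1dsicom3}, $L$ already has a $\dSicom{3}$ Scott sentence, and hence a $\dSinf{3}$ Scott sentence. So the only remaining task is optimality, namely ruling out both a $\Sinf{3}$ and a $\Pinf{3}$ Scott sentence for $L$. By the connection between Scott sentence complexity and index sets reviewed in the introduction, it suffices to show that $I_L$ is both $\Pi^0_3$-hard and $\Sigma^0_3$-hard: the former rules out a $\Sinf{3}$ Scott sentence and the latter rules out a $\Pinf{3}$ Scott sentence.

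The $\Pi^0_3$-hardness comes for free: \cref{cor:indexsethr1} asserts that every rank $1$ linear order has a $\Pi^0_3$-hard index set, and $L$ satisfies $r(L)=1$ by assumption.

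For $\Sigma^0_3$-hardness, I will use the hypothesis that $L$ is neither simple nor of order type $m+\zeta+n$. By \cref{lem:notsimplehr1}, this implies that $L$ contains an interval isomorphic to one of $\omega+\omega$, $\omega^*+\omega^*$, $\omega+\zeta$, $\zeta+\omega^*$, or $\omega+n+\omega^*$ for some $n>0$. In the first four cases, \cref{lem:sigma3hard2blocks} directly gives $\Sigma^0_3$-hardness of $I_L$. In the remaining case, the preceding lemma (dealing with intervals of the form $\omega+n+\omega^*$) supplies the same conclusion. Either way, $I_L$ is $\Sigma^0_3$-hard.

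Combining these two hardness facts with the $\dSinf{3}$ Scott sentence given by \cref{thm:hr1dsicom3} yields optimality of the $\dSinf{3}$ Scott sentence in the partial order of Scott sentence complexities. There is no real obstacle here; the proof is just a matter of assembling the lemmas in the right way, and the only thing to be careful about is the direction of the implications between hardness of $I_L$ and non-existence of Scott sentences of given complexity.
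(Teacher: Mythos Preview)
Your argument is exactly the paper's: the Corollary carries no separate proof there and is meant to follow immediately from \cref{thm:hr1dsicom3}, \cref{cor:indexsethr1}, \cref{lem:notsimplehr1}, \cref{lem:sigma3hard2blocks}, and the subsequent lemma on intervals of type $\omega+n+\omega^*$, which is precisely the chain of citations you assemble. The one point worth making explicit is relativization: hardness of the lightface index set $I_L$ only rules out \emph{computable} $\Sicom{3}$ or $\Picom{3}$ Scott sentences, so to conclude optimality among all infinitary sentences you need $I_L^X$ to be $\Pi_3^0(X)$- and $\Sigma_3^0(X)$-hard for every oracle $X$ (the paper flags this explicitly for the simple case just before \cref{lem:notsimplehr1}); the constructions in the cited lemmas relativize routinely.
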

We cannot hope to obtain that our upper bounds are optimal for all linear orders of Hausdorff rank greater than $1$. Frolov and Zubkov (unpublished) gave for all $n<\omega$ and $3\leq m\leq 2n$ examples of linear orders of Hausdorff rank $n$ that have degree of categoricity $\Delta^0_m$. This implies that these orders are $\Delta^0_m$ categorical. An analysis of their proofs shows that they are even uniformly $\Delta^0_m$ categorical. Hence, by \cref{thm:ssnopar} these orders have Scott sentences of strictly less complexity.
They kindly allowed us to print one of their examples.
\begin{proposition}[after Frolov and Zubkov]
There is a linear order of Hausdorff rank $2$ that has a $\Picom{4}$ Scott sentence.
\end{proposition}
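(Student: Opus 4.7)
The plan is to apply Theorem~\ref{thm:ssnopar}: it suffices to exhibit a scattered linear order $L$ of Hausdorff rank $2$ every automorphism orbit of which is $\Sinf{3}$-definable without parameters, or equivalently (and perhaps more convenient to verify in practice) that is uniformly $\pmb\Delta_3^0$-categorical.

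I would take $L$ to be a rank-$2$ order whose $\sim_1$-blocks come equipped with ``canonical markers'' individually $\Sinf{3}$-identifiable, so that one avoids counting $\omega$-many pairwise-indistinguishable blocks. A natural candidate is
\[L \;=\; \omega \,+\, 1 \,+\, \zeta \,+\, 2 \,+\, \zeta \,+\, 3 \,+\, \zeta \,+\, \cdots,\]
obtained by interleaving finite $\sim_1$-blocks of mutually distinct sizes $1,2,3,\ldots$ with $\zeta$-shaped separators, preceded by a single initial $\omega$-block. After verifying $r(L)=2$ using Proposition~\ref{thm:finmodblock}, I would give $\Sinf{3}$-definitions of the orbits block by block. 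The finite blocks are each individually $\Sinf{3}$-definable by their unique size: the formula existentially witnesses a block-minimum $y_1$, a block-maximum $y_k$, and the chain of $k-2$ immediate successors linking them, using that in a scattered linear order block-extrema are $\Pinf{2}$-definable (a block-minimum is exactly an element with no immediate predecessor, and dually for block-maxima). Position within the size-$k$ block is then $\Sinf{3}$-definable as well.

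The main technical hurdle is producing $\Sinf{3}$-definitions for orbits of elements lying in the initial $\omega$-block and in the intermediate $\zeta$-separators, which have no intrinsic size-marker. A naive formulation via ``$x$'s block is $\sim_1$-adjacent to the unique size-$k$ finite block'' introduces a $\Pinf{3}$-clause asserting ``no $\sim_1$-block lies strictly between'', which inflates the total complexity to $\Sinf{4}$. My plan to handle this is to replace the offending universal by repackaging adjacency combinatorially inside a single $\Sinf{3}$-existential: simultaneously witness an internal reference point of $x$'s block (e.g.\ its block-minimum, where one exists), the entire neighbouring finite block serving as a marker, and a finite chain of immediate successors or predecessors bridging the two, exploiting the very constrained local structure of $L$ between adjacent $\sim_1$-blocks. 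Once every orbit has been shown to be $\Sinf{3}$-definable, Theorem~\ref{thm:ssnopar} immediately yields a $\Pinf{4}$ Scott sentence for $L$, as required.
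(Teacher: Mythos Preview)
Your overall strategy---take a rank-$2$ order whose $\sim_1$-blocks carry pairwise-distinct finite tags and verify one of the equivalent conditions of Theorem~\ref{thm:ssnopar}---is exactly the paper's. The paper uses $L=\sum_{i\in\omega}(i+\zeta)$, a close variant of your order, and establishes uniform $\Delta_3^0$-categoricity by describing an explicit $\Delta_3$ procedure that computes an isomorphism between any two copies; you instead aim to write down $\Sinf{3}$ orbit formulas directly. These routes are interchangeable by Theorem~\ref{thm:ssnopar}.

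However, your specific plan for the $\zeta$-block orbits does not work as written. You propose to witness ``a finite chain of immediate successors or predecessors bridging'' $x$'s block to the neighbouring finite marker, but no such chain exists: the maximum of the size-$k$ block has no immediate successor (the adjoining $\zeta$-block has no least element), and dually on the other side, so no element of a $\zeta$-block is reachable from an adjacent finite block by any finite successor/predecessor path. The easy repair is to drop adjacency altogether and assert merely that the size-$k$ block lies entirely below $x$ and the size-$(k+1)$ block entirely above $x$; each clause is $\Sinf{3}$ (existentially quantify the $k$, respectively $k+1$, witnesses satisfying the $\Pinf{2}$ successor-chain-with-extremal-endpoints condition), and together they isolate the correct $\zeta$-block in $L$. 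A second, smaller gap: the statement asks for a \emph{computable} $\Pinf{4}$ sentence, whereas Theorem~\ref{thm:ssnopar} as stated is boldface. The paper handles this by proving lightface uniform $\Delta_3^0$-categoricity and invoking the effective version of the correspondence; you would need to do likewise, or verify that your orbit formulas form a c.e.\ Scott family of $\Sicom{3}$ formulas.
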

\begin{proof}
Consider the following linear order which clearly has Hausdorff rank $2$.
\[ L=\sum_{i\in \omega} i+ \zeta\]
We will show that it is uniformly $\Delta^0_3$ categorical. Consider a copy $L'$ of $L$. We define a $\Delta^{L\oplus L'}_3$ computable isomorphism $f:L\ra L'$.

Assume we have defined $f(y)$ for $y<x$ and define $f(x)$ as follows. Let $n=1$.
\begin{enumerate}
    \item If $x$ is in a block of order type $n$ proceed to 2; else proceed to 3.
    \item Say $x$ is the $i^{th}$ element in its block. Find a block of order type $n$ in $L'$ and set $f(x)$ to be the $i^{th}$ element in this block.
    \item If there is a block of order type $n$ with all elements smaller than $x$ and if there is a block of order type $n+1$ with all elements bigger than $x$ proceed to 4; else proceed to 5.
    \item Locate the $\zeta$ block between $n$ and $n+1$ in $L'$ and define $f(x)$ so that in the limit $f$ will be an isomorphism between the $\zeta$ blocks.
    \item Increase $n$ by $1$ and go to step 1.
\end{enumerate}
It remains to show that $f$ is $\Delta^{L\oplus L'}_3$ computable. A block of size $n$ is definable in $L$ by
\begin{multline*} \exists x_1,\dots ,x_n \forall x( x<x_1 \rightarrow  \exists y\  x<y<x_1) \land \forall x( x>x_n \ra \exists y\ x_n<y<x)\\\land x_1<\dots <x_n \land \forall y_1,\dots, y_{n+1} (\bigwedge_{i<n+1} x_1\leq y_1 \leq x_n\ra \bigvee_{1\leq i< j\leq n+1} y_i=y_j)\end{multline*}
As $L$ and $L'$ both contain exactly one block of size $n$, $\Delta^{L\oplus L'}_3$ can find it and thus all of the steps in the construction are computable in $\Delta^{L\oplus L'}_3$. If $x$ is in a block of size $i$ then the procedure will terminate at step $2$ after $i$ iterations, and if $x$ is in a $\zeta$ block that succeeds a block of size $i$ it will terminate at step $4$ after $i$ iterations. Notice that defining the isomorphism between elements of that $\zeta$ block can be done computably in $\Delta_2^{L\oplus L'}$. Thus the whole procedure can be done computably in $\Delta_3^{L\oplus L'}$. Furthermore it is uniform in $L$ and $L'$ and thus the $L$ is uniformly $\Delta^0_3$ categorical. Using an effective version of \cref{thm:ssnopar} we get that $L$ has a $\Picom{4}$ Scott sentence. More formally, we use a result by R.\ Miller~\cite[Proposition 4.1]{miller2017a} to obtain a c.e. Scott family of $\Sigma^{L\oplus L'}_3$ formulas and the relativization of a result by Alvir, Knight, and McCoy~\cite[Proposition 2.9]{alvir2018} to get the required Scott sentence.
\end{proof}
In order to obtain results on structures of arbitrary Hausdorff rank we use a version of Ash and Knight's pairs of structure theorem which appeared in~\cite{ash1990}. Before we state the precise theorem we need some more definitions.
\begin{definition}
 Let $\A$ be a structure and $K$ be a class of structures, all in the same language. For countable $\alpha$, define $K\leq_\alpha \A$ if for all $\beta<\alpha$, and every $\ol a\in A^{<\omega}$ there exists $\B\in K$ and $\ol b \in B^{<\omega}$ such that $(\A,\ol a)\leq_\beta (\B,\ol b)$.
\end{definition}
\begin{definition}
 A finite or countable sequence $\langle \A_0,\A_1,\dots \rangle$ of structures is \emph{$\alpha$-friendly} if the structures $\A_i$ are uniformly computable and for $\beta<\alpha$ the back and forth relations $\leq_\beta$ on the set of pairs $(\A_i, \ol a)$ for $\ol a\in A_i^{<\omega}$ are c.e., uniformly in $\beta$.
\end{definition}

The following lemma is restated from~\cite[Theorem 4.2]{ash1990}.
\begin{lemma}\label{thm:pairsofstructureclasses}
    Let $\alpha$ be a computable ordinal, $\A$ be a structure and $K$ be a countable class of structures, all in the same language, such that $K\leq_\alpha \A$ and $K\cup \{\A\}$ is $\alpha$-friendly. Then for each $\Pi^0_\alpha$ set $S$ there is a uniform computable sequence of structures $(\mc C_n)_{n\in\omega}$ such that 
    \[ \mc C_n \cong \begin{cases}
       \A & \text{if }n\in S,\\
       \B & \text{for some }\B\in K, \text{if } n\not\in S.
    \end{cases}\]
\end{lemma}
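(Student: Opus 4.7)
The plan is to appeal to Ash's metatheorem for $\alpha$-systems, since the statement is essentially a restatement of \cite[Theorem 4.2]{ash1990}. I will sketch how the construction works, so the reader sees why the hypotheses $K \leq_\alpha \A$ and $\alpha$-friendliness are exactly what is needed.

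For each $n$, I would build $\mc C_n$ by a uniformly computable priority construction. Since $S \in \Pi^0_\alpha$, there is a uniformly computable ``approximation'' to the question ``$n \in S$'' whose complexity is controlled by ordinals $\beta < \alpha$: at each stage $s$ we have access to a current guess $\beta_s < \alpha$ together with a partial $\Pi^0_{\beta_s}$ witness that $n \notin S$ may fail. The construction maintains, at each stage, a finite partial approximation $\mc C_{n,s}$ together with a target pair $(\mc M_s, \ol a_s)$, where $\mc M_s$ is either $\A$ or some structure in $K$, and the enumeration of $\mc C_{n,s}$ is witnessed by a partial embedding into $\mc M_s$ sending the committed elements to $\ol a_s$. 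Initially $\mc M_0 = \A$.

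At stage $s+1$, if no new evidence appears that would lower the level $\beta_s$, we simply enumerate one more element of $\mc M_s$ into $\mc C_{n,s+1}$, preserving the partial embedding. If, however, the $\Pi^0_\alpha$-approximation produces at stage $s+1$ a new event threatening $n \notin S$ at some level $\gamma < \beta_s$, we invoke $K \leq_\alpha \A$ to find $\B \in K$ and $\ol b \in B^{<\omega}$ with $(\mc M_s, \ol a_s) \leq_\gamma (\B, \ol b)$, and we replace $(\mc M_s, \ol a_s)$ by $(\B, \ol b)$, setting $\beta_{s+1} = \gamma$. The $\alpha$-friendliness of $K \cup \{\A\}$ is precisely what makes this search computable and uniform: the relations $\leq_\gamma$ are c.e.\ uniformly in $\gamma < \alpha$, so we can effectively locate the required $\B$ and $\ol b$. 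Because ordinals below $\alpha$ are well-founded, the targets can change only finitely often along any path: either $\beta_s$ never triggers a switch (so $\mc M_s \equiv \A$ throughout and $\mc C_n \cong \A$, which occurs exactly when $n \in S$), or the construction eventually commits to a specific $\B \in K$ and $\mc C_n \cong \B$.

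The main obstacle is the simultaneous bookkeeping of approximations at all levels $\beta < \alpha$: one needs to show that the switches, although possibly happening at every stage, stabilize along a well-founded descent in the $\alpha$-system, and that the resulting structure is both computable and truly isomorphic to the intended one. This is exactly the content of Ash's metatheorem on $\alpha$-systems and pairs of structures. Rather than reconstruct the full framework of $\alpha$-systems here, I would just cite \cite[Theorem 4.2]{ash1990} and remark that the hypotheses $K \leq_\alpha \A$ and $\alpha$-friendliness of $K \cup \{\A\}$ are exactly the two inputs required by that theorem in the form stated above.
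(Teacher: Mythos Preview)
Your proposal is correct and matches the paper's approach: the paper does not prove this lemma at all but simply states it as a restatement of \cite[Theorem 4.2]{ash1990}, which is exactly what you do after your sketch. Your additional outline of the $\alpha$-system construction is reasonable and more informative than what the paper provides, though it is not needed for the paper's purposes.
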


The following example is obtained by applying \cref{thm:pairsofstructureclasses}. It first appeared in~\cite{ash1990}.
\begin{example}\label{ex:AshKnightEx}
Let $\alpha$ be a computable ordinal, and $S$ be a $\Sigma_{2\alpha}^0$ set. Then there exists a uniformly computable sequence of structures $(\mathcal C_n)_{n\in\omega}$ such that
\[ \mathcal C_n \cong
\begin{cases}
   \gamma<\omega^{\alpha} &\text{if } n\in S,\\
   \omega^{\alpha} & \text{otherwise}.
\end{cases}
\]
\end{example}
We can now prove the following hardness result about the block relation.
\begin{proposition}\label{thm:completeblock}
For any computable ordinal $\alpha$ and $\Sigma_{2\alpha}^0$ set $S$ there is a linear order $L$ and a computable function $f:\omega \ra L^{2}$ such that
\[ n\in S \LR f(n) \in \sim_{\alpha}.\]
\end{proposition}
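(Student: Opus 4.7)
The plan is to reduce to \cref{ex:AshKnightEx} applied to the given $\Sigma^0_{2\alpha}$ set $S$. That example supplies a uniformly computable sequence $(\mathcal C_n)_{n\in\omega}$ of linear orders with $\mathcal C_n$ isomorphic to some ordinal strictly below $\omega^\alpha$ when $n\in S$, and $\mathcal C_n\cong\omega^\alpha$ otherwise. Since any $\gamma<\omega^\alpha$ has Hausdorff rank $r(\gamma)<\alpha$ (write $\gamma$ in Cantor normal form) while $r(\omega^\alpha)=\alpha$, membership in $S$ is equivalent to the rank of $\mathcal C_n$ dropping strictly below $\alpha$. All that remains is to package these $\mathcal C_n$ into one linear order with designated pairs $(a_n,b_n)$ whose $\sim_\alpha$-equivalence detects this rank drop.

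For the construction, set $L_n:=1+\mathcal C_n+1$ and let $a_n,b_n$ be the leftmost and rightmost elements of $L_n$. Let $L:=\sum_{n\in\omega}L_n$; uniform computability of the sequence $(\mathcal C_n)$ makes $L$ computable and the endpoints of each summand locatable computably in $n$. Define $f(n):=(a_n,b_n)$. The key observation is that the interval $[a_n,b_n]_L$ is precisely $L_n$.

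For the verification I will use the equivalence $x\sim_\alpha y \iff r([x,y])<\alpha$, proved by an easy induction on $\alpha$ from \cref{def:block} and \cref{thm:finmodblock}. Granting this: if $n\in S$ then $L_n\cong 1+\gamma+1$ with $\gamma<\omega^\alpha$, which is a finite sum of orders of Hausdorff rank $<\alpha$, so $r(L_n)<\alpha$ and hence $a_n\sim_\alpha b_n$. If $n\notin S$ then $L_n\cong 1+\omega^\alpha+1$, which contains $\omega^\alpha$ as a suborder and so has Hausdorff rank exactly $\alpha$, giving $a_n\not\sim_\alpha b_n$. Together this shows $f(n)\in{\sim_\alpha}$ iff $n\in S$, as required.

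The main obstacle is justifying the equivalence $x\sim_\alpha y\iff r([x,y])<\alpha$. The subtlety is that $x$ and $y$ may lie inside $\beta$-blocks of the ambient $L$ that are only partially contained in $[x,y]$, so one must check that truncated $\beta$-blocks remain of rank $\le\beta$, and hence that if $[x,y]$ meets only finitely many $\beta$-blocks of $L$ then $r([x,y])\le\beta$. Conversely, one applies \cref{thm:finmodblock} inside the suborder $[x,y]$ to see that $r([x,y])\le\beta$ forces finitely many $\sim_\beta$-classes there, and uses that $\sim_\beta$ on a convex subset of $L$ agrees with the intrinsic $\sim_\beta$ of that subset to transfer this back into $L$. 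Once this equivalence is in hand, the rest of the argument above is immediate.
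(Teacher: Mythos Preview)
Your proof is correct and follows essentially the same strategy as the paper: invoke \cref{ex:AshKnightEx}, concatenate the resulting orders into a single $L$, and pick computable marker points whose $\sim_\alpha$-relation detects whether the block between them has rank $<\alpha$ or $=\alpha$. The only cosmetic difference is that you pad each $\mc C_n$ with explicit endpoints $a_n,b_n$ and take $f(n)=(a_n,b_n)$, whereas the paper simply uses the first elements $c_n$ of the $\mc C_n$ and sets $f(n)=(c_n,c_{n+1})$; you also spell out the equivalence $x\sim_\alpha y\Leftrightarrow r([x,y])<\alpha$ that the paper leaves as ``not hard to see.''
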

\begin{proof}
Let $(\mathcal{C}_n)_{n\in\omega}$ be the sequence of structures from \cref{ex:AshKnightEx}. Let $c_n$ be the first element of $\mathcal C_n$ and assume without loss of generality that the sequence $(c_n)_{n\in\omega}$ is uniformly computable. Let $f: n\mapsto (c_n,c_{n+1})$ and $L=\sum_{n\in \omega} \mc C_n$. It is not hard to see that $c_n\sim_{\alpha}c_{n+1}$ if and only if $n\in S$.
\end{proof}
\cref{thm:pairsofstructureclasses} also allows us to produce the following modification of \cref{ex:AshKnightEx} which we will use to show the completeness of the index set of linear orders with a fixed Hausdorff rank.

\begin{example}\label{ex:AKmodified}
Let $\alpha$ be a computable ordinal, and $S$ be a $\Sigma_{2\alpha+2}^0$ set. Then there exists a uniformly computable sequence of structures $(\mathcal C_n)_{n\in\omega}$ such that
\[ \mathcal C_n \cong
\begin{cases}
   \omega^{\alpha}\cdot m &\text{for some }m\in\omega, \text{if } n\in S,\\
   \omega^{\alpha+1} & \text{otherwise}.
\end{cases}
\]
\end{example}
\begin{proof}
We want to apply \cref{thm:pairsofstructureclasses} with $K=\{\omega^\alpha\cdot m: m\in\omega\}$ and $\A=\omega^{\alpha+1}$. That we can choose $K$ and $\A$ to be $(2\alpha+2)$-friendly follows from \cite[Proposition 15.11]{ash2000}. It remains to show that $K\leq_{2\alpha+2} \omega^{\alpha+1}$. By definition $K\leq_{2\alpha+2} \omega^{\alpha+1}$ if and only if there exists $m\in\omega$ such that for all $\ol a \in \omega^{\alpha+1}$, there exists $\ol b\in \omega^{\alpha}\cdot m$ and $(\omega^{\alpha+1},\ol a)\leq_{2\alpha+1} (\omega^{\alpha}\cdot m,\ol b)$. Clearly, every $\ol a$ of size $n$ splits $\omega^{\alpha+1}$ into intervals 
\[ \A_0 + a_0+ \A_1 + a_1 + \cdots + a_{n-1}+ \omega^{\alpha+1}\]
where each $\A_i<\omega^{\alpha+1}$. Picking $m$ large enough we can find a tuple $\ol b$ of length $n$ in $\omega^{\alpha}\cdot m$ such that $\ol b$ splits $\omega^{\alpha}\cdot m$ into intervals
\[ \B_0+ b_0 + \B_1 + b_1 + \cdots + b_{n-1}+ \omega^{\alpha}\]
where each $\B_i\cong \A_i$. By \cref{lem:akpart} it suffices to show that $\omega^{\alpha+1}\leq_{2\alpha+1} \omega^{\alpha}$. This follows from Ash's characterization of the back and forth relations of ordinals~\cite{ash1986}.
\end{proof}
\begin{remark}
    We argued that $\omega^{\alpha+1}\leq_{2\alpha+1} \omega^{\alpha}$ using Ash's characterisation of the back and forth relations of ordinals~\cite{ash1986}. For this result, a common citation, which we also use frequently in this article, is the restatement in the book by Ash and Knight~\cite[Lemma 15.10]{ash2000}. However, in this restatement there is a typo which would not allow our last argument to go through. Using transfinite induction one can easily check that the version that appeared in~\cite{ash1986} is correct.
\end{remark}
Using the sequence provided by \cref{ex:AKmodified} we immediately get completeness of the index set of linear orders of a given Hausdorff rank.
\begin{theorem}\label{thm:completehr}
For every computable ordinal $\alpha$ the set $\{ \mc C_e : r(\mc C_e)=\alpha\}$ is $\Sigma_{2\alpha+2}^0$ complete.
\end{theorem}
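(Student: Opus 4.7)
The plan is to establish both bounds separately, each as a direct consequence of tools developed earlier in the paper.

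For the upper bound I would write ``$\mc C_e$ is a linear order of Hausdorff rank $\alpha$'' via the block relation. First, ``$\mc C_e$ encodes a linear order'' is $\Pi^0_2$ in $e$ and is absorbed into $\Sigma^0_{2\alpha+2}$. By \cref{thm:finmodblock}, $r(L) = \alpha$ iff $L/{\sim_\alpha}$ is finite and $L/{\sim_\beta}$ is infinite for every $\beta < \alpha$. By \cref{prop:blockdefn}, $\sim_\gamma$ is $\Sinf{2\gamma}$-definable, so, interpreted over the computable atomic diagram of $\mc C_e$, ``$x \sim_\gamma y$'' is $\Sigma^0_{2\gamma}$ in $(e,x,y)$. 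Consequently
\[\bigvee_{n \in \omega} \exists x_0 \ldots x_{n-1}\, \forall y\, \bigvee_{i < n} y \sim_\alpha x_i\]
is $\Sigma^0_{2\alpha+2}$ in $e$, while ``$L/{\sim_\beta}$ is infinite for every $\beta < \alpha$'' is a computable conjunction of $\Pi^0_{2\beta+2}$ conditions, hence $\Pi^0_{2\alpha}$ (using $\sup\{2\beta+2 : \beta<\alpha\} = 2\alpha$ at limit $\alpha$, and being a single conjunct $\Pi^0_{2\beta+2}=\Pi^0_{2\alpha}$ at successor $\alpha = \beta+1$). Since $\Pi^0_{2\alpha} \subseteq \Sigma^0_{2\alpha+2}$ and the latter is closed under finite conjunction, the composite condition is $\Sigma^0_{2\alpha+2}$.

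For the lower bound I would invoke \cref{ex:AKmodified} directly. Given any $\Sigma^0_{2\alpha+2}$ set $S$, that example yields a uniformly computable sequence $(\mc C_n)_{n \in \omega}$ with $\mc C_n \cong \omega^\alpha \cdot m$ for some $m \in \omega$ when $n \in S$ and $\mc C_n \cong \omega^{\alpha+1}$ otherwise. Since $r(\omega^\alpha \cdot m) = \alpha$ for $m \geq 1$ and $r(\omega^{\alpha+1}) = \alpha + 1$, the map $n \mapsto (\text{index of }\mc C_n)$ is a many-one reduction from $S$ to $\{e : r(\mc C_e) = \alpha\}$. The degenerate $m = 0$ outcome (the empty order, of rank $0$) can be neutralized by concatenating a fixed computable copy of $\omega^\alpha$ on the left of each $\mc C_n$: this leaves the ``no'' side as $\omega^\alpha + \omega^{\alpha+1} \cong \omega^{\alpha+1}$ and promotes any ``yes'' output to $\omega^\alpha \cdot (m+1)$, still of rank exactly $\alpha$.

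The main obstacle is the quantifier bookkeeping in the upper bound at limit $\alpha$, which rests on the sup computation above together with the standard closure properties of the infinitary/arithmetic hierarchies. Everything else follows immediately from the block-relation analysis of \cref{sec:upperbound} and the pairs-of-structures construction packaged in \cref{ex:AKmodified}.
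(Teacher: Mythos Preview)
Your argument is correct and follows the paper's intended route: the hardness direction is exactly the paper's one-line proof, namely ``use \cref{ex:AKmodified}.'' You add two things the paper leaves implicit: an explicit $\Sigma^0_{2\alpha+2}$ upper bound via \cref{prop:blockdefn} and \cref{thm:finmodblock}, and a patch for the possible $m=0$ output of \cref{ex:AKmodified} by prepending a fixed copy of $\omega^\alpha$. Both additions are sound and make the argument more self-contained than the paper's version; the quantifier count and the successor/limit split for the $\Pi^0_{2\alpha}$ bound are handled correctly.
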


\subsection{Relativizing index sets}
\cref{thm:completeblock} shows that there is no computable infinitary sentence in the language of linear orders of complexity less than $\Sigma_{2\alpha}$ saying that two elements are in the same $\alpha$-block and \cref{thm:completehr} says that the index set of Hausdorff rank $\alpha$ linear orders is $\Sigma_{2\alpha+2}^0$ complete for computable $\alpha$. We want to show that it is not possible to find less complex non-computable sentences defining these properties for any countable $\alpha$.

For $\alpha\geq \omega_1^{\mrm CK}$, $\omega^{\alpha}$ does not have a computable copy and therefore is not $(2\alpha)$-friendly. However, take $X$ such that $X$ computes a notation for $\omega^{\alpha}$, i.e., $\omega^{\alpha}<\omega_1^{X}$. Then we can canonically relativize being $(2\alpha)$-friendly to $X$ and, because the proof of~\cite[Theorem 15.11]{ash2000} also relativizes, get that $\omega^{\alpha}$ is $(2\alpha)$-friendly relative to $X$. Then, \cref{thm:pairsofstructureclasses} relativizes in the following sense.
\begin{lemma}
   Let $\alpha$ be an $X$-computable ordinal for some $X\subseteq \omega$, $\A$ be a structure and $K$ be a class of structures, all in the same language, such that $K\leq_\alpha \A$ and $K\cup \{\A\}$ is $\alpha$-friendly relative to $X$. Then for each $\Pi^0_\alpha(X)$ set $S$ there is a uniformly $X$-computable sequence of structures $(\mc C_n)_{n\in\omega}$ such that 
    \[ \mc C_n \cong \begin{cases}
       \A & \text{if }n\in S,\\
       \B & \text{for some }\B\in K, \text{if } n\not\in S.
    \end{cases}\]
\end{lemma}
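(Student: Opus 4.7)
The plan is to observe that this statement is a mechanical relativization of \cref{thm:pairsofstructureclasses} (i.e., Ash and Knight's pairs of structures theorem from~\cite{ash1990}), and to argue that the original proof goes through verbatim once every occurrence of ``computable'' is replaced by ``$X$-computable.'' Concretely, the original proof of \cref{thm:pairsofstructureclasses} is a priority-style construction that takes as input three computable ingredients: (i) a computable $\Pi^0_\alpha$ approximation of $S$, (ii) uniformly computable presentations of the structures in $K\cup\{\A\}$, and (iii) uniformly c.e.\ enumerations of the relations $\leq_\beta$ on the pairs $(\A_i,\ol a)$ for $\beta<\alpha$. From these it outputs a uniformly computable sequence $(\C_n)_{n\in\omega}$ that matches $\A$ on $S$ and approximates members of $K$ off $S$, using $K\leq_\alpha \A$ to execute the requisite back-and-forth moves at each stage.

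Under the hypotheses of our statement, each of (i)--(iii) has an $X$-computable analogue: (i) is available because $\Pi^0_\alpha(X)$ sets admit $X$-computable approximations whenever $\alpha$ is $X$-computable; (ii) and (iii) are exactly the contents of the definition of ``$\alpha$-friendly relative to $X$.'' I would therefore not reprove the theorem from scratch, but rather state that the construction of~\cite[Theorem 4.2]{ash1990} relativizes in the standard sense: replace the oracleless Turing machines used in that proof by oracle machines with oracle $X$, replace computable ordinal notations by $X$-computable ones, and replace the c.e.\ enumerations of back-and-forth relations by $X$-c.e.\ ones. The output is then a uniformly $X$-computable sequence of the required form.

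The only step that deserves a line of explicit comment is checking that the ``relativizes in the standard sense'' move is legitimate here; this is routine because the construction of~\cite{ash1990} uses its computable inputs only as black boxes (enumerations and membership tests), never exploiting any specific property of $\emptyset$. Hence the only obstacle is bookkeeping, and the proof reduces to a single sentence invoking the relativized version of~\cite[Theorem 4.2]{ash1990}, which is the conclusion we want.
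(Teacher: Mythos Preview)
Your proposal is correct and matches the paper's treatment: the paper does not give a separate proof of this lemma but simply notes in the preceding paragraph that \cref{thm:pairsofstructureclasses} relativizes and then states the relativized version. Your identification of the three black-box inputs and why each has an $X$-computable analogue is, if anything, more explicit than what the paper provides.
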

It is a well known fact that the function indexing the sequence $(\mc C_n)_{n\in\omega}$ can be chosen computably and this together with the above mentioned property of the required ordinals is sufficient to obtain the analogues of \cref{ex:AshKnightEx,ex:AKmodified} for ordinals computable relative to some set $X$. 
Fixing $\alpha<\omega_1$ and relativizing the examples to any set $X$ we obtain the desired results.
\begin{corollary} Let $\alpha$ be a countable ordinal.
\begin{enumerate}
\item The relation $\sim_\alpha$ is not definable by a $\mc L_{\omega_1\omega}$ formula less complex than $\Sigma_{2\alpha}$ in the language of linear orders.
\item\label{item:hralphaoptimal} The class of Hausdorff rank $\alpha$ linear orders is not axiomatizable by a formula of complexity less than $\Sigma_{2\alpha+2}$.
\end{enumerate}
\end{corollary}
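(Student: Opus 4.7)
The plan is to prove both items by contradiction, leveraging the relativized completeness results just established. The template is identical in both cases: reduce a complete set in the hyperarithmetic hierarchy to the purported simpler defining/axiomatizing formula on an $X$-computable linear order, and read off a complexity contradiction.

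For (1), suppose for contradiction that $\sim_\alpha$ were defined in the language of linear orders by an $L_{\omega_1\omega}$-formula $\phi(x,y)$ of complexity strictly less than $\Sigma_{2\alpha}$. Then $\phi$ is $\Sigma_\beta^Y$ or $\Pi_\beta^Y$ for some $\beta<2\alpha$ and some $Y\subseteq\omega$. Pick $X\supseteq Y$ with $\alpha<\omega_1^X$, which is possible since $\alpha$ is countable. The relativization of \cref{ex:AshKnightEx} to $X$ then produces, for any $\Sigma^0_{2\alpha}(X)$-complete set $S$, a uniformly $X$-computable sequence $(\mathcal{C}_n)_{n\in\omega}$ of linear orders with $\mathcal{C}_n\cong\omega^\alpha$ when $n\notin S$ and $\mathcal{C}_n\cong\gamma<\omega^\alpha$ otherwise. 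Replaying the proof of \cref{thm:completeblock} on top of this sequence yields an $X$-computable linear order $L=\sum_{n\in\omega}\mathcal{C}_n$ and an $X$-computable many-one reduction $n\mapsto(c_n,c_{n+1})$ from $S$ to $\sim_\alpha^L\subseteq L^2$. Since $\phi$ is $\Sigma_\beta^X$ or $\Pi_\beta^X$ and $L$ is $X$-computable, $\sim_\alpha^L$ lies in $\Sigma_\beta^0(X)\cup\Pi_\beta^0(X)$; pulling back along the reduction places $S$ in that same class, contradicting its $\Sigma^0_{2\alpha}(X)$-completeness by the hyperarithmetic hierarchy theorem.

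Part (2) proceeds identically: a hypothetical $\Sigma_\beta^Y$ or $\Pi_\beta^Y$ axiomatization of the class of Hausdorff rank $\alpha$ linear orders with $\beta<2\alpha+2$ gets combined with the relativization of \cref{ex:AKmodified} and the argument of \cref{thm:completehr} to exhibit a $\Sigma_\beta^0(X)\cup\Pi_\beta^0(X)$-description of a $\Sigma^0_{2\alpha+2}(X)$-complete set, once $X\supseteq Y$ is chosen with $\alpha<\omega_1^X$, again contradicting the hyperarithmetic hierarchy theorem.

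The only genuine obstacle is making sure that the friendliness hypothesis in \cref{thm:pairsofstructureclasses} survives the relativization: one must check that the pairs $(\{\gamma:\gamma<\omega^\alpha\},\omega^\alpha)$ and $(\{\omega^\alpha\cdot m:m\in\omega\},\omega^{\alpha+1})$ are respectively $(2\alpha)$- and $(2\alpha+2)$-friendly relative to every $X$ with $\alpha<\omega_1^X$. This is obtained by relativizing \cite[Proposition 15.11]{ash2000}, exactly as indicated in the paragraph preceding the corollary, and is the only place where the ordinal-arithmetic content of Ash's back-and-forth calculation enters the relativized argument.
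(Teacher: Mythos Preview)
Your proposal is correct and follows the same approach as the paper: relativize \cref{thm:completeblock} and \cref{thm:completehr} (via the relativized \cref{ex:AshKnightEx} and \cref{ex:AKmodified}) to an oracle $X$ that both computes the formula and has $\alpha<\omega_1^X$, then read off a complexity contradiction. The paper compresses this into a single sentence, so your version is simply a faithful expansion of the intended argument, including the explicit handling of the friendliness hypothesis that the paper places in the paragraph preceding the corollary.
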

Note that by Vaught's theorem \cref{item:hralphaoptimal} is equivalent to: The class of Hausdorff rank $\alpha$ linear orders is $\pmb{\Sigma}_{2\alpha+2}$-complete.

\printbibliography

\end{document}